\newtheorem{thm}{Theorem}[section]
\newtheorem{cor}[thm]{Corollary}
\newtheorem{conj}[thm]{Conjecture}
\newtheorem{lemma}[thm]{Lemma}
\newtheorem{prop}[thm]{Proposition}
\theoremstyle{definition}
\newtheorem{definition}[thm]{Definition}
\theoremstyle{remark}
\newtheorem{remark}[thm]{Remark}
\newtheorem{example}[thm]{Example}
\theoremstyle{definition}
\newenvironment{thmp}[1]{
  
  \thmalt
}{\endthmalt}
\def\cf{\mathfrak{c}}
\def\mf{\mathfrak{m}}
\def\gf{\mathfrak{g}}
\def\Cbb{\mathbb{C}}
\def\Rbb{\mathbb{R}}
\def\Zbb{\mathbb{Z}}
\def\Gbb{\mathbb{G}}
\def\Fbb{\mathbb{F}}
\def\Abb{\mathbb{A}}
\def\Oc{\mathcal{O}}
\def\Fc{\mathcal{F}}
\def\Nc{\mathcal{N}}
\def\Pc{\mathcal{P}}
\def\Cc{\mathcal{C}}
\def\id{\mathrm{id}}
\def\Hom{\operatorname{Hom}}
\def\ker{\operatorname{ker}}
\def\Ind{\operatorname{Ind}}
\def\End{\operatorname{End}}
\def\Res{\operatorname{Res}}
\def\Ver{\operatorname{Ver}}
\def\Rep{\operatorname{Rep}}
\def\Tilt{\operatorname{Tilt}}
\def\dim{\operatorname{dim}}
\def\Vec{\operatorname{Vec}}
\def\sVec{\operatorname{sVec}}
\def\Gr{\operatorname{Gr}}
\def\ol{\overline}
\def\Spec{\operatorname{Spec}}
\def\Dist{\operatorname{Dist}}
\def\ext{\operatorname{ext}}
\def\sub{\subseteq}
\title{A Remarkable Functor on $G$-Modules}
\author[1,2]{Joseph Baine}
\author[1,2]{Tasman Fell}
\author[2]{Anna Romanov}
\author[1,2]{Alexander Sherman}
\author[1]{Geordie Williamson}
\affil[1]{University of Sydney}
\affil[2]{University of New South Wales}
\date{}
\begin{document}
\normalem

\maketitle

\begin{abstract}

    We introduce a new functor on categories of modular representations of reductive algebraic groups. 
    Our functor has remarkable properties. 
    For example it is a symmetric monoidal functor and sends every standard and costandard object in the principal block to a one-dimensional object. 
    We connect this new functor to recent work of Gruber and
    conjecture that it is isomorphic to hypercohomology
    under the equivalence of the Finkelberg-Mirkovi\'c conjecture.

\end{abstract}


\section{Introduction}

	The modular representation theory of algebraic groups is an astonishingly rich and deep subject, with influences from compact Lie groups (Weyl’s character formula), finite groups (decomposition numbers and Brauer reciprocity) and geometric representation theory (Kazhdan-Lusztig and Lusztig conjectures). 
	More recently, striking connections to algebraic number theory have emerged (see e.g. \cite{LLLM}).

	In this paper we introduce a new functor on the category of representations of a reductive algebraic group. 
	Our functor is simple; it is given by restricting to the first Frobenius kernel of a regular unipotent subgroup, and throwing away all projective summands.  
	We prove that this functor has remarkable properties: it is symmetric monoidal, and sends any standard or costandard module in the principal block to a one-dimensional object. 
	It is crucial for our arguments that the target of our functor is the Verlinde category, an exotic symmetric tensor category whose characteristic 0 analogue is only braided, but not symmetric.  
	We were  motivated by work of Duflo and Serganova  \cite{duflo2005associated} who introduced a similar functor to super vector spaces in the setting of Lie superalgebras. 
	See also the parallel work \cite{hone2025semisimplifying} where a similar functor was studied on modular representations of symmetric groups.

	Our functor should allow detailed study of tensor products of modular representations. 
	In particular, we prove that it is well-adapted to the study of Gruber’s regular modules \cite{Gruber}. 
	We also conjecture that it has an alternative description in the language of geometric representation theory. 
	Namely, it should provide an algebraic incarnation of hypercohomology under a conjectural equivalence due to Finkelberg and Mirkovi\'c  \cite{FM} which has recently been established by Bezrukavnikov and Riche \cite{BR2}.  

\subsection{Main results}

	Let $\Bbbk$ be an algebraically closed field of characteristic $p>0$, and let $G$ be a reductive algebraic group over $\Bbbk$ (for precise assumptions see \S\ref{sec: Alg grps}). 
	Fix a maximal torus and Borel subgroup $T \subset B \subset G$, and let $\mathfrak{X}_+ \subset \mathfrak{X}$ denote the (dominant) weights of $T$ determined by the opposite Borel to $B$. 
	We consider the category $\Rep (G)$ of algebraic representations of $G$.
	Examples of representations in $\Rep (G)$ include the standard, costandard, simple and indecomposable tilting modules of highest weight $\lambda \in \mathfrak{X}_+$, that we denote $\Delta_\lambda, \nabla_\lambda, L_\lambda$ and $T_\lambda$, respectively.

	Let $K \subset G$ denote a principal $SL_2$ subgroup\footnote{Although it is common to refer to this subgroup as the ``principal $SL_2$ subgroup'' the reader should keep in mind that this terminology is slightly deceptive. It is a rank 1 subgroup, isomorphic to either $SL_2$ or $PGL_2$. The latter case occurs, e.g. in $SL_3$.} such that $T_K = T \cap K$ is a maximal torus in $K$ and $B_K = B \cap K$ is a Borel subgroup of $K$. 
	Let $H$ denote the first Frobenius kernel of $U_K$, the unipotent radical of $B_K$. 
	Thus, $H$ is a regular unipotent group scheme isomorphic to $\alpha_p$, the first Frobenius kernel of the additive group $\mathbb{G}_a$. 

	Recall that representations of $\alpha_p$ are easily described: they are simply vector spaces together with a nilpotent endomorphism of degree at most $p$.  
	The Verlinde category is formed by semisimplifying the category of representations of $\alpha_p$.  
	One obtains in this way a semisimple symmetric tensor category $\Ver_p$ with $p-1$ simple objects.  
	The Verlinde category has an (a priori surprising) symmetry, given by tensoring with the $(p-1)$-dimensional Jordan block, whenever $p > 2$. 
	We denote this functor by $\Pi$ (often called `parity shift'), and note that $\Bbbk$ and $\Pi \Bbbk$ give a tensor subcategory of $\Ver_p$ isomorphic to the tensor category of super vector spaces. 
	We denote this tensor subcategory by $\operatorname{sVec}\subset \Ver_p$.

	Consider the functor
	\[
		\Phi_H : \Rep (G) \to \Ver_p,
	\]
	given by first restricting to $H$, and then taking the image in the Verlinde category under semisimplification. 
	(Note that $\Phi_H$ has an explicit, elementary description, which is explained in \S\ref{sec: OTI}.)  
	This defines a symmetric monoidal functor which is not exact in general. 
	Our first main theorem is that the value of $\Phi_H$ on (co)standard modules is remarkably simple, and is controlled by the extended affine Weyl group under the $p$-dilated dot action.

	\begin{thmp}{A}
	\label{thm A} 
		Let $\lambda,\mu\in\mathfrak{X}_+$, and take $s\in W$ (the affine Weyl group) to be a reflection such that $s\cdot\lambda\in\mathfrak{X}_+$. 
		Then we have natural isomorphisms
    	\begin{enumerate} 
        	\item $\Phi_{H}(\Delta_\lambda)\cong\Pi\Phi_{H}(\Delta_{s \cdot \lambda})$,
        	\item $\Phi_{H}(\Delta_\lambda) \cong\Phi_{H}(\Delta_{\lambda+p\mu}).$
    	\end{enumerate}
	\end{thmp}

	Note that we prove Theorem \ref{thm A} for costandard modules in the body of the paper, which is equivalent because $\Phi_H$ is symmetric monoidal and we have $\Delta_\lambda^*\cong\nabla_{-w_0(\lambda)}$, where $w_0$ denotes the longest element of the finite Weyl group.

	Let $W^{\ext}$ denote the extended affine Weyl group. 
	Both $W$ and $W^{\ext}$ act on $\mathfrak{X}$ via the $p$-dilated dot action.  
	As a consequence of the linkage principle, we have the block decomposition
	\[
		\Rep (G) 
		= 
		\bigoplus_{ [\gamma] \in \mathfrak{X}/(W\cdot)} \Rep_\gamma (G) 
		\quad 
		\text{where} 
		\quad 
		\Rep_\gamma (G) = \langle L_\lambda 
		\; | \; 
		\lambda \in W \cdot \gamma \cap \mathfrak{X}_+ \rangle.
	\]
	Here we write $[\gamma]$ for the coset containing $\gamma$.  
	We consider the principal block $\Rep_0(G)$ as well as the extended principal block
	\[
		\Rep_0^{\ext} (G) 
		= 
		\langle L_\lambda 
		\; | \; 
		\lambda \in W^{\ext} \cdot 0 \cap \mathfrak{X}_+ \rangle.
	\]

	Theorem A implies that all (co)standard modules in the extended principal block are mapped to either $\Bbbk$ or $\Pi \Bbbk$ under $\Phi_H$. 
	Our second theorem shows that, in fact, our functor maps the entire extended principal block to super vector spaces:

	\begin{thmp}{B}
    	\[
    	\Phi_H(\Rep_0^{\ext}(G))\subseteq\operatorname{sVec}.
    	\]
	\end{thmp}

	One may unpack Theorems A and B into a concrete statement about (co)standard modules in the extended principal block which is rather striking:

	\begin{cor}
	\label{cor yet another cor}  
		Let $H\sub G$ be as above.
   		\begin{enumerate}
        	\item The restriction to $H$ of a (co)standard module in the extended principal block has a unique Jordan block of dimension less than $p$.
        	\item The restriction to $H$ of any module in the extended principal block has all Jordan blocks of dimensions $1$, $p-1$ and $p$.
    	\end{enumerate}
	\end{cor}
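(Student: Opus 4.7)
The plan is to first translate the statement $\Phi_H(M)\in\Ver_p$ into concrete information about the Jordan block decomposition of $M|_H$. Since $H\cong\alpha_p$, any $H$-module decomposes as a direct sum of Jordan blocks $J_1,J_2,\ldots,J_p$, where $J_k$ is $k$-dimensional. The block $J_p$ is projective (and injective) in $\Rep(H)$, hence negligible and killed by semisimplification, while for $1\le k\le p-1$ the block $J_k$ descends to a simple object $L_k\in\Ver_p$. Thus if $M|_H\cong\bigoplus_k n_k J_k$ then $\Phi_H(M)\cong\bigoplus_{k=1}^{p-1}n_k L_k$. Under the identifications recalled in the introduction, $L_1=\Bbbk$ and $L_{p-1}=\Pi\Bbbk$, and $\operatorname{sVec}\subset\Ver_p$ is precisely the additive subcategory generated by these two simples.

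For part (1), it suffices to show $\Phi_H(\Delta_\lambda),\Phi_H(\nabla_\lambda)\in\{\Bbbk,\Pi\Bbbk\}$ for every $\lambda\in W^{ext}\cdot 0\cap\mathfrak{X}_+$, because via the dictionary above this is exactly the statement that the restriction has a unique Jordan block of dimension $<p$. I would start from $\Delta_0=\Bbbk$, for which $\Phi_H(\Delta_0)=\Bbbk$ trivially, and iterate Theorem~A: part~(1) flips $\Bbbk\leftrightarrow\Pi\Bbbk$ under reflections preserving dominance, and part~(2) is invariant under translations by $p\mu$ with $\mu\in\mathfrak{X}_+$. A standard connectivity argument in the dominant chamber shows that any element of $W^{ext}\cdot 0\cap\mathfrak{X}_+$ can be reached from $0$ by a sequence of such moves whose intermediate weights remain dominant. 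For the costandard case I would use that $\Phi_H$ is symmetric monoidal and hence preserves duals: since $\nabla_\lambda\cong(\Delta_{-w_0\lambda})^*$ with $-w_0\lambda$ still in the extended principal block, and since both $\Bbbk$ and $\Pi\Bbbk$ are self-dual in $\Ver_p$, the result transfers.

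For part (2), I would apply Theorem~B directly. If $M\in\Rep_0^{ext}(G)$, then $\Phi_H(M)\in\operatorname{sVec}$ means in the decomposition $\Phi_H(M)\cong\bigoplus_{k=1}^{p-1}n_kL_k$ the multiplicity $n_k$ vanishes for every $k\in\{2,3,\ldots,p-2\}$. Translated back through the dictionary, this says the only Jordan blocks of $M|_H$ of dimension $<p$ have dimension $1$ or $p-1$. Combined with the possibility of $p$-dimensional Jordan blocks (which $\Phi_H$ cannot see), all Jordan blocks of $M|_H$ lie in $\{1,\,p-1,\,p\}$, as claimed.

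The main obstacle is not conceptual but organisational: the bulk of the work lies in Theorems~A and~B, and the corollary is essentially a translation exercise. The one place requiring genuine care is the connectivity step in part~(1), namely verifying that the two moves supplied by Theorem~A are enough to connect $0$ to any element of $W^{ext}\cdot 0\cap\mathfrak{X}_+$ through dominant intermediates; this is standard, but deserves to be spelled out, as does the duality argument transferring the conclusion from $\Delta_\lambda$ to $\nabla_\lambda$.
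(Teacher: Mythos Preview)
Your proposal is correct and is precisely the unpacking the paper leaves implicit; the paper gives no separate proof of this corollary, stating only that it follows from Theorems~A and~B, and your dictionary between Jordan blocks of $M|_H$ and simple summands of $\Phi_H(M)$ in $\Ver_p$ is exactly the translation intended. Two small remarks: your indexing $L_k$ for the image of the $k$-dimensional Jordan block differs from the paper's convention $L_i$ for the image of the $(i+1)$-dimensional block, so $\Bbbk=L_0$ and $\Pi\Bbbk=L_{p-2}$ there; and your duality step for $\nabla_\lambda$ is fine but unnecessary, since the paper in fact proves Theorem~A for costandard modules directly (Corollary~\ref{cor:shifting} and Lemma~\ref{lemma translation invariance}) and only states it for $\Delta_\lambda$ in the introduction.
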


	\begin{remark}
    	In general, the special Jordan block in (1) of Corollary \ref{cor yet another cor} sits rather non-trivially inside the (co)standard module.
    	For example, outside of $SL_2$, it typically has no intersection with the highest and lowest weight spaces.
	\end{remark}

	Recall that our functor $\Phi_H$ is given by restriction to $H$, followed by a semisimplification procedure. 
	As such, it has extra structure given by the action of the centraliser $C_G(H)$ of $H$ in $G$. 
	Thus, we may view $\Phi_H$ as a functor
	\[
	\Phi_H : \Rep (G) \to \Rep_{\Ver_p}(C_G(H)). 
	\]
	The group $C_G(H)$ is the centraliser in $G$ of a regular nilpotent element in the Lie algebra, and it has a beautiful structure that was studied by Steinberg (\cite{SteinbergConjClasses}), Springer (\cite{springer}), Kostant (\cite{KostantPolynomial},\cite{KostantTDS}), and more recently by Yun-Zhu (\cite{YunZhu}) and Bezrukavnikov-Riche-Rider (\cite{BRR}).
	Theorems of Ginzburg and Yun-Zhu provide a homological interpretation of this centraliser: its coordinate ring is isomorphic to the homology of the affine Grassmannian of the Langlands dual group of $G$.

	One may incorporate the action of the normaliser of $H$, $N_G(H)$, in order to introduce a grading.  
	One recovers in this way the grading on homology, under the isomorphism of the coordinate ring with the homology of the affine Grassmannian. 
	We may then upgrade $\Phi_H$ to a functor taking values in graded modules over $C_G(H)$. 
	Restricting this picture to the extended principal block, Theorem B implies that we have a functor:
	\[
	\Phi_H:\Rep^{\ext}_0(G)\to\operatorname{gr}_{\Zbb}\Rep_{\operatorname{sVec}}(C_G(H))
    .
	\]
	We show the action of $C_G(H)$ on $\Phi_H(\Rep^{\ext}_0(G))$ factors over its Frobenius twist. 
	We conjecture (Conjecture \ref{conj:FM}) that our functor has an incarnation in terms of constructible sheaves: it should be isomorphic to the hypercohomology functor under an equivalence conjectured by Finkelberg and Mirkovi\'c, and recently proved by Bezrukavnikov and Riche.

	\begin{remark}
    	It is natural to ask whether the above picture still works if we instead take \linebreak $H\cong C_p=U_K(\mathbb{F}_p)\sub G$.  
    	It is known that the semisimplification of $\Rep (C_p)$ is also $\operatorname{Ver}_p$.  
    	We show in \S3 that Theorem A also holds in this case; however, Theorem B fails.  
    	Already for $SL_2$ we have that $\Phi_H(L_{2p-2})$ is not a super vector space.  
    	One essential difference between these two cases is the compatibility with Frobenius twists.  
    	Indeed, $\Phi_{C_p}(V^{(1)})\cong \Phi_{C_p}(V)^{(1)}$ while $\Phi_{\alpha_p}(V^{(1)})=V$. 
    	(Here we use the symbol $V$ to denote both the original $G$-module and its image in $\Ver_p$, under the inclusion $\operatorname{Vec}\sub\operatorname{Ver}_p$.)  
    	Another important difference is that the normaliser of $\alpha_p$ is much larger than that of $C_p$.
	\end{remark}

\subsection{Motivation from Lie superalgebras}

	The original motivation for studying the functor $\Phi_H$ came from the representation theory of Lie superalgebras over $\Cbb$.  
	Here, one of the most powerful tools is the Duflo-Serganova functor, introduced in \cite{duflo2005associated}.  
	It may be defined as follows: if $\gf$ is a Lie superalgebra and $x\in\gf$ is an odd element, we have that $\frac{1}{2}[x,x]=x^2$ in the universal enveloping algebra $\mathcal{U}\mathfrak{g}$.  
	In particular, the condition $[x,x]=0$ is nontrivial, and implies that $x$ acts by a square-zero operator on every representation.  
	Given such an $x$, the Duflo-Serganova functor $DS_x:\Rep (\gf)\to\operatorname{sVec}$ is given by the homology of the operator $x$.  
	An equivalent definition is obtained by the diagram:
	\[
	\xymatrix{\Rep(\gf)\ar[rr]^{Res}\ar[rrd]_{DS_x} & & \Rep(\Gbb^{0|1}) \ar[d]^{ss}\\ && \operatorname{sVec}}
	\]
	where $\Gbb^{0|1}$ is the purely odd additive supergroup of dimension $(0|1)$, whose representation theory is equivalent to modules over $\Cbb[x]/x^2$.  
	Here, the functor $ss$ denotes semisimplification. 
	We thus see a clear parallel to the functor $\Phi_H$ in positive characteristic.

	The Duflo-Serganova functor has been used in the study of blocks, central characters, superdimension formulae, categorical actions, and tensor products for Lie superalgebras.  
	For a survey of this functor and its applications, see \cite{gorelik2022duflo}.

\subsection{Relation to Gruber's work} 
	Theorem B gives us a tensor functor
	\[
		\Phi_H : \Rep_0^{\ext}(G) \to \sVec \subset \Ver_p.
	\]
	Given two modules $M, N$ in the (extended) principal block, their tensor product $M \otimes N$ (almost) never lies in the principal block. 
	On the other hand, the value of $\Phi_H$ on a general module in $\Rep (G)$ typically involves many summands in $\Ver_p$, and certainly there is no reason to suspect that it should land in $\operatorname{sVec} \subset \Ver_p$. 
	It is thus surprising that any summand of $M \otimes N$ will be mapped under $\Phi_H$ to $\operatorname{sVec}$.

	An explanation for this curious behaviour is provided by beautiful recent observations of Gruber \cite{Gruber}. 
	For any $M \in \Rep (G)$, Gruber considers a minimal complex $C^\bullet_M$ of tilting modules with cohomology $M$. 
	He calls $M$ \emph{singular} if every indecomposable summand of $C^i_M$ has dimension divisible by $p$ for all $i$.  
	This defines a thick tensor ideal, which we denote by $\Rep_{sing}$, whose objects are singular modules. 
	One may consider the quotient of additive categories (a tensor category)
	\[
		\underline{\Rep}(G) = \Rep (G) / \Rep_{sing}.
	\]
	Gruber proves that if we denote by $\underline{\Rep}^{\ext}_0(G)$ the image of the extended principal block in $\underline{\Rep}(G)$, then $\underline{\Rep}^{\ext}_0(G)$ is closed under tensor product. 
	In \S \ref{sec: Singular modules} we prove that $\Phi_H$ vanishes on $M$ if and only if $M$ is singular. 
	In particular, $\Phi_H$ factors over $\underline{\Rep}(G)$. 
	Gruber has since begun a systematic study of tensor product multiplicities in $\underline{\Rep}(G)$ \cite{GruberGeneric}. 
	We hope our functor $\Phi_H$ may provide a new tool in studying these questions.

\subsection{Relation to Finkelberg-Mirkovic conjecture} 
	The Finkelberg-Mirkovi\'c conjecture is one of the most useful guiding principles in the modular representation theory of algebraic groups (see e.g. \cite[\S 2.5]{WTakagi} or \cite[\S 13]{CW}). 
	It has recently been proven as the culmination of three deep works of Bezrukavnikov, Riche and Rider \cite{BRR, BR1, BR2}, as a consequence of a modular analogue of Bezrukavnikov's two realizations of the affine Hecke category \cite{Bez}.

	Let us briefly recall the statement of the Finkelberg-Mirkovi\'c conjecture, before pointing out the relevance to our work. 
	Let ${}^LG$ be the complex group which is dual in the sense of Langlands to $G$, and let $\Gr= {}^LG((t))/{}^LG[[t]]$ denote the affine Grassmannian for ${}^LG$.
	Let ${}^LB \subset {}^LG$ be the subgroup corresponding to a choice of Borel subgroup $B \subset G$, and let $\mathrm{Iw}$ denote the Iwahori subgroup of ${}^LG((t))$ corresponding to our choice of Borel ${}^LB \subset {}^LG$. 
	Finkelberg and Mirkovi\'c conjectured an equivalence \cite{FM}
	\begin{equation} 
	\label{eq:FM2}
		\Rep_0^{\ext}(G) \stackrel{\sim}{\to} P_{(\mathrm{Iw})}(\Gr,\Bbbk)
	\end{equation}
	where $P_{(\mathrm{Iw})}(\Gr,\Bbbk)$ denotes the category of perverse sheaves on $\Gr$ which are constructible with respect to the stratification by Iwahori orbits.\footnote{In \cite{BR2}, this is stated in terms of $\mathrm{Iw}_u$-equivariant sheaves, where $\mathrm{Iw}_u$ denotes the pro-unipotent radical of $\mathrm{Iw}$, however these two categories are equivalent.}

	As with many equivalences appearing in geometric Langlands duality, functors or operations on one side may be  mysterious on the other side.
	A particular instance of this is given by the (hyper)cohomology functor $H^*$ on $P_{(\mathrm{Iw})}(\Gr, \Bbbk)$, which produces graded modules over $H^*(\Gr)$. 
	Ever since the statement of the Finkelberg-Mirkovi\'c conjecture, it has been an intriguing problem to describe this functor on the other side of the equivalence. 
	This is a particularly appealing problem as the hypercohomology functor is central to other ``Soergel type" equivalences (see e.g. \cite{BGS,Ginzburg,SKoszul}).

	In the final section we gather evidence that $\Phi_H$ provides an algebraic incarnation of hypercohomology:

	\begin{conj} 
	\label{conj:FM}
		Under the identification of $H^*(\Gr) = \Dist C_G(e)^{(1)}$ we have a commuting diagram
		\[
		\begin{tikzcd}
    		\mathrm{Rep}_0^{\mathrm{ext}}(G) \arrow[r, "\sim"] \arrow[d, swap, "\Phi_H"] & P_{(\mathrm{Iw})}(\mathrm{Gr}, \Bbbk) \arrow[d, "H^*"] \\
  			\Dist C_G(e)^{(1)}\text{-}\operatorname{mod} \arrow[r, "\sim"]  & H^*(\mathrm{Gr})\text{-}\operatorname{mod}.
		\end{tikzcd}
		\]
		(Recall from the above discussion that $\Phi_H$ can be viewed as taking values in super vector spaces with $C_G(H)^{(1)}$-action. 
        Here we forget the $\mathbb{Z}/2\mathbb{Z}$-grading arising from the super vector space structure.)
    \end{conj} 

    We also give a version of this conjecture that incorporates $\mathbb{Z}$-gradings (again forgetting the super vector space structure) in Conjecture \ref{conj:FM2}.

	Establishing this conjecture could eventually lead to a simplified proof of the Finkelberg-Mirkovi\'c conjecture. 
	In any case, it seems important to understand the relation between $\Phi_H$ and the proof of the Finkelberg-Mirkovi\'c conjecture in \cite{BR2}.  
	As evidence for the conjecture, we prove that $\Phi_H$ is
        homological (see Remark \ref{rem:homological}) and agrees with the cohomology functor on tilting and (co)standard modules (\S \ref{sec: FM conj}). 

\subsection{Structure of this paper} 
	This paper is set up as follows:
	\begin{itemize}
		\item In \S 2 we collect preliminary facts and notation.
		\item We first prove Theorem A (\S\ref{Sec: Proof Thm A}). Here we use techniques from algebraic geometry and translation functors.
		\item We then prove Theorem B (\S \ref{Sec: Proof Thm B}). Here the methods are homological.
		\item In \S \ref{Sec: Further} we establish some other results discussed in the introduction. We reinterpret our functor in terms of minimal complexes of tilting modules, connect our functor to Gruber's theory and establish some results towards Conjecture \ref{conj:FM}.
	\end{itemize}

\subsection{Acknowledgements} 
	We thank Chris Hone, Finn Klein, Bregje Pauwels, Oded Yacobi, and Victor Zhang, the better half of the OTI team, without whose support and consultation this project would not have gotten off the ground.  
	We thank Roman Bezrukavnikov, Elijah Bodish, Pablo Boixeda-Alvarez, Kevin Coulembier, Pavel Etingof,  Jonathan Gruber, Mikko Korhonen, and Vera Serganova for stimulating conversations.  
	We further thank an anonymous referee for a particularly thorough and careful reading of our paper.  
	We would also like to thank Ruby, Heinrich and Lukas from the University of Sydney's One Tree Island Research Station for a stimulating research environment and a suitable name for our functor. 
	J.B. was supported by ARC grant DP220102861.  
	A.R. and T.F. were supported by a UNSW FRTG2024 grant.  
	A.S. was supported by ARC grant DP210100251 and by an AMS-Simons Travel Grant. 
	G.W. was supported by Australian Laureate Fellowship FL230100256 and the Max Planck Humboldt Research Award.

\section{Notation and background}
\label{sec: notation and background}

\subsection{Background on algebraic groups}
\label{Sec: Background alg grps}

    We begin by recalling basic facts and fixing notation pertaining to algebraic groups. 
    Standard references for this material include \cite{JantzenBook,WTakagi}.
    \par 
    Throughout we fix $\Bbbk$, an algebraically closed field of characteristic $p>0$.
    Later we will impose minor conditions on the characteristic $p$.

\subsubsection{Algebraic groups}
\label{sec: Alg grps}

    In this small section, we fix notation on subgroups and root datum, to be used throughout the paper. 

    We fix a reductive algebraic group, over $\Bbbk$, whose derived subgroup is simply connected.
    Further, we assume that our group $G$ arises via extension of scalars from a group over $\mathbb{F}_p$, and in particular have a fixed isomorphism $G \to G^{(1)}$ where $G^{(1)}$ denotes the Frobenius twist on $G$.
    Moreover, we fix a Borel subgroup $B \subset G$, and a maximal torus $T \subset B$. 
    We denote by $U \subset B$ the unipotent radical of $B$, and $h$ the Coxeter number of $G$. 
    \par 
    The root datum $(\mathfrak{X},R, \mathfrak{X}^{\vee}, R^{\vee})$ associated to $G$ consists of a character lattice $\mathfrak{X}$, root system $R \subset \mathfrak{X}$, cocharacter lattice $\mathfrak{X}^{\vee}$ and coroot system $R^{\vee} \subset \mathfrak{X}^{\vee}$.
    We fix a set of positive roots $R_+ \subset R$ and positive coroots $R^{\vee}_+ \subset R^{\vee}$
    so that  the roots occurring in the Lie algebra of $B$ are $- R_+$.
    The set of dominant weights will be denoted $\mathfrak{X}_+$.
    \par 
    We impose the following assumptions: 
    (a) The characteristic satisfies $p > h$; and 
    (b) The group $G$ is an almost-simple algebraic group
    (i.e. $Z(G)$ is finite and $G/Z(G)$ is simple).
    Assumption (a) \emph{is} necessary for various arguments in the paper. 
    Assumption (b) \emph{is not} necessary; it is made to simplify
    notation and exposition.

\subsubsection{Weyl groups and alcoves} 
\label{sec:Weyl groups and alcoves}

    Set $\mathfrak{X}_{\Rbb} := \mathfrak{X} \otimes_{\Zbb} \Rbb$. 
    For any $\alpha \in R$ we define the reflection $s_{\alpha} : \mathfrak{X}_{\Rbb} \rightarrow \mathfrak{X}_{\Rbb}$ by
    \begin{align}
    \label{Eqn: reflection action}
        s_{\alpha}(\lambda) = \lambda - \langle \lambda, \alpha^{\vee} \rangle \alpha
    \end{align}
    where $\lambda \in \mathfrak{X}_{\Rbb}$. 
    Let $\Sigma$ denote the set of simple roots in $R$, and $S_f$ the set of simple reflections $\{ s_{\alpha} ~|~ \alpha \in \Sigma \}$.
    The (finite) Weyl group $W_f = N_G(T)/T$ of $G$ is isomorphic to the group generated by the reflections $s \in S_f$.  
    The affine Weyl group\footnote{In the language of \cite{Bourbaki}, this would be called the affine Weyl group of ${}^L G$, the Langlands dual group of $G$. } $W$ and extended affine Weyl group $W^{\ext}$ are respectively defined as:
    \begin{align*}
        W = W_{f} \ltimes \Zbb R,
        &&
        W^{\ext} = W_{f} \ltimes \mathfrak{X},
    \end{align*}  
    where $\Zbb R$ denotes the root lattice of $G$.
    \par 
    We will use the $p$-dilated dot action of these groups on $\mathfrak{X}_{\Rbb}$. 
    For any $\mu \in \mathfrak{X}$ let $t_{\mu}$ denote $(1, \mu)$ in $W^{\ext}$. 
    Recall $\rho = \frac{1}{2} \sum_{\alpha \in R_+} \alpha$. 
    Then, for any $w \in W_f$, $\mu \in \mathfrak{X}$ and $\lambda \in \mathfrak{X}_{\Rbb}$, we define $w t_{\mu} \cdot \lambda$ as 
    \begin{align}
    \label{Eqn: rho shifted action}
        wt_{\mu} \cdot \lambda = w(\lambda + \rho + p\mu) - \rho.
    \end{align}
    Denote by $\alpha_0$ the highest short-root of $R$, and $s_0$ the  reflection $t_{\alpha_0} s_{\alpha_0}$.
    We call $S = S_f \cup \{ s_0 \}$ the set of simple reflections.
    The affine Weyl group is generated by the reflections $s \in S$.
    \par 
    Both $(W_f, S_f)$ and $(W,S)$ are Coxeter systems; the former being a standard parabolic subgroup of the latter. 
    Their Bruhat orders and length functions are denoted by $\leq$ and
    $\ell$ respectively.
    The group $W^{\ext}$ is not a Coxeter group in general, but still admits a length function, which we also denote by $\ell$.
    We denote the set of minimal length coset representatives for $W_f \backslash W$ by ${}^f W$. 
    \par 
    Consider the $p$-dilated, $\rho$-shifted fundamental alcove $A_0$ and its closure $\overline{A_0}$, which are defined as
    \begin{align*}
    	A_0 
    	&:= 
    	\{ \lambda \in \mathfrak{X}_{\Rbb} ~|~ 0 < \langle \lambda + \rho , \alpha^{\vee} \rangle  < p \text{ for all } \alpha \in R_+\},
    	\\
    	 \overline{A_0} 
    	 &:= 
    	 \{ \lambda \in \mathfrak{X}_{\Rbb} ~|~ 0 \leq \langle \lambda + \rho , \alpha^{\vee} \rangle  \leq p \text{ for all } \alpha \in R_+\}.
    \end{align*}  
    A connected component of $W \cdot A_0$ is called an alcove; the set of all alcoves is denoted $\mathscr{A}$.
    Any alcove $A \in \mathscr{A}$ that non-trivially intersects the set of dominant weights, i.e. $A \cap \mathfrak{X}_+ \neq \emptyset$, is called dominant, and the set of dominant alcoves is denoted $\mathscr{A}_+$.
    \par 
    The closure $\overline{A_0}$ is a fundamental domain for the $p$-dilated dot action of $W$ on $\mathfrak{X}_{\Rbb}$. 
    Consequently, we have a bijection $W \tilde{\longrightarrow} \mathscr{A}$ where $x \in W$ is identified with the alcove $x \cdot A_0$. 
    Moreover, this bijection restricts to a bijection ${}^f W \tilde{\longrightarrow} \mathscr{A}_+$.
	The extended affine Weyl group $W^{\ext}$ acts on $\mathscr{A}$; however, this action is not free. 
	The elements of $W^{\ext}$ which stabilise $A_0$ are denoted $\Omega$, and are  precisely the set of elements of length zero.
        
	Any element of $W^{\ext}$ may be regarded as an affine transformation of $\mathfrak{X}_{\Rbb}$ and hence as a linear transformation of $\mathfrak{X}_{\Rbb} \oplus \mathbb{R}$ in a standard way.
	Taking determinants gives rise to the sign character
	\[
		\epsilon : W^{\ext} \to \{ \pm 1 \}.
    \]
    This character restricts to the sign character $x \mapsto (-1)^{\ell(x)}$ on $W$. 
    However, $\epsilon$ is \emph{not} always equal to $y \mapsto (-1)^{\ell(y)}$ on $W^{\ext}$. 
    Rather, if we write $y = t_\mu x$, where $t_\mu$ is a translation and $x \in W$, then $\epsilon (y) = \epsilon (x)$, because translations always have determinant 1.
    
	Finally, a weight $\lambda \in W\cdot (\mathfrak{X} \cap A_0)$ is called $p$-regular; an integral weight that is not $p$-regular is called $p$-singular. 
    Equivalently, a weight $\lambda \in \mathfrak{X}$ is called $p$-regular if its stabiliser under the dot action of $W$ is trivial. 
    A $p$-regular weight exists if and only if $p\geq h$ by \cite[Equation (10) of \S 6.2]{JantzenBook}.

\subsubsection{Representations of algebraic groups}
\label{sec: Reps}

	Let $K$ be an algebraic group defined over $\Bbbk$.  
	The category of finite dimensional rational (equivalently algebraic) $\Bbbk$-representations of $K$ is denoted $\Rep (K)$.  
	More generally, if $\Cc$ is a symmetric tensor category {over $\Bbbk$} in the sense of \cite{EGNO}, then we may view $K$ as an algebraic group in $\Cc$ via the inclusion $\operatorname{Vec}_{\Bbbk}\sub\Cc$, where $\Vec_{\Bbbk}$ denotes the category of finite-dimensional vector spaces over $\Bbbk$. 
	We write $\Rep_{\Cc}(K)$ for the category of $K$-modules in $\Cc$.  
	More explicitly, objects of $\Rep_{\Cc}(K)$ are exactly objects in $\Cc$ with the structure of a right comodule over the coordinate algebra $\Bbbk[K]$.
	In particular, $\Rep (K)$ is, by definition, equal to $\Rep_{\Vec_{\Bbbk}} (K)$. 
	Later, we shall consider categories of the form $\operatorname{gr}_{{A}} \Rep_{\Cc}(K)$ for an abelian group $A$.
	Explicitly, this means that $\Bbbk[K]$ is an ${A}$-graded Hopf algebra, and $\operatorname{gr}_{{A}}\Rep_{\Cc}(K)$ is the category of ${A}$-graded $K$-modules in $\Cc$.    
	Equivalently, $K$ admits an action of a diagonalisable algebraic group $\mathbb{A}$ by automorphisms with $A=\mathfrak{X}(\mathbb{A})$, and we have $\operatorname{gr}_{A}\Rep_{\Cc}(K) \simeq\Rep_{\Cc}(\mathbb{A} \ltimes K)$.  
	(Here we use the notation $\mathfrak{X}(\Abb)$ for the group of characters of $\Abb$.)  
	We will primarily be concerned with the case $A=\Zbb$ and $\mathbb{A}=\Gbb_m$.
	\par 
	For each dominant, integral weight $\lambda \in \mathfrak{X}_+$ we consider the simple module $L_{\lambda}$, Weyl (standard) module $\Delta_{\lambda}$, induced (costandard) module $\nabla_{\lambda}$ and tilting module $T_{\lambda}$, each of highest weight $\lambda$. 
	Every simple, standard, costandard, and  indecomposable tilting module in $\Rep (G)$ is of the preceding form.

\subsubsection{Linkage classes and translation functors} 
	We refer to Part II, Chapter 7 of \cite{JantzenBook} for the following section.	
	For any $\lambda \in {\overline{A_0} \cap \mathfrak{X}}$, we define $\Rep_{\lambda} (G)$ and $\Rep_{\lambda}^{\ext} (G)$  to be the Serre subcategories of $\Rep (G)$ defined by  
	\begin{align*}
		\Rep_{\lambda} (G) 
		&=
		\langle L_{\mu}  ~|~ \mu \in  (W \cdot \lambda) \cap \mathfrak{X}_{+} \rangle
		\\
		\Rep_{\lambda}^{\ext} (G) 
		&=
		\langle L_{\mu}  ~|~ \mu \in (W^{\ext}\cdot \lambda) \cap \mathfrak{X}_{+} \rangle
	\end{align*}
	respectively.
	In general, $\Rep_{\lambda}(G)$ is not a block of $\Rep (G)$; it is a union of blocks. 
	However if $\lambda$ is $p$-regular, then $\Rep_{\lambda} (G)$ is a genuine block of $G$.  
	Our assumptions on $p$ ensure that $\lambda = 0$ is a $p$-regular weight. 
	In particular, we call $\Rep_{0} (G) $ the principal block of $\Rep (G)$, and $\Rep_{0}^{\ext} (G) $ the extended principal block of $\Rep (G)$ (though the latter is not a block). 
    The full subcategories of tilting modules in $\Rep (G)$, $\Rep_0 (G)$, and $\Rep_0^{\ext} (G)$  are respectively denoted by $\Tilt $, $\Tilt_0 $, and $\Tilt_0^{\ext} $.
	\par 
	Fix weights $\lambda, \mu \in \overline{A_0} \cap \mathfrak{X}$, and a representation $M$ with extremal weights contained in the orbit $W_f ({\lambda-\mu})$, where we use the standard action of $W_f$ on $\mathfrak{X}_{\Rbb}$.
	Let inc${}_{\lambda} : \Rep_{\lambda} (G) \rightarrow \Rep (G) $ denote the inclusion functor and proj${}_{\lambda} : \Rep (G) \rightarrow \Rep_{\lambda} (G) $ denote the projection functor. 
	Then we define the translation functor $\theta_{\mu}^{\lambda}$ as 
	\begin{align*}
		\theta_{\mu}^{\lambda} : \Rep_{\mu} (G) \longrightarrow \Rep_{\lambda} (G),
		&&
		V \longmapsto \text{proj}_{\lambda}(\text{inc}_{\mu}(V) \otimes M).
	\end{align*}
	Note that different choices of $M$ produce isomorphic functors $\theta^{\lambda}_{\mu}$. 
	Moreover, the functors $\theta_{\mu}^{\lambda}$ and $\theta^{\mu}_{\lambda}$ are biadjoint and exact. 
	\par 
	Again, fix a weight $\lambda \in A_0 \cap \mathfrak{X}$, a simple reflection $s \in S$, and a weight $\mu_s \in \overline{A_0} \cap \mathfrak{X}$ whose stabiliser under the $p$-dilated dot action of $W$ is exactly $\{ 1 , s\}$. 
	By our assumptions on $p$, such a weight $\mu_s$ exists. 
	The wall-crossing functor $\Theta_s$ is defined as the composition
	\begin{align*}
		\Theta_s 
		= 
		\theta^{\lambda}_{\mu_s} \circ \theta_{\lambda}^{\mu_s} :  \Rep_{\lambda} (G) \longrightarrow \Rep_{\lambda} (G).
	\end{align*}
	Again, $\Theta_s$ is only defined up to isomorphism. 
	The action of wall-crossing functors on standard and costandard objects is well-understood. 
	In particular, in the notation above, for $x$ and $xs$ in ${}^fW$ we have exact sequences
    \begin{align*}
        xs > x:
        &&
        0 \to \nabla_{x \cdot \lambda}~ \to  \Theta_{s}(\nabla_{x \cdot \lambda}) \to \nabla_{xs \cdot \lambda} \to 0, 
        \\
        xs < x:
        &&
        0 \to \nabla_{xs \cdot \lambda} \to  \Theta_{s}(\nabla_{x \cdot  \lambda}) \to \nabla_{x \cdot \lambda}~ \to 0.
    \end{align*}

\subsection{Background on $\Ver_p$ and the OTI functor}
\label{Sec: Background OTI}

\subsubsection{Stable module category and semisimplification}
\label{sec: stable module category and semisimplification}

	For an algebraic group ${F}$, we say that  $\Rep ({F})$ is Frobenius if it has enough projectives and the class of projective and injective modules coincide in $\Rep ({F})$. 
	For example, if $H$ is a finite group scheme and $K$ is a diagonalisable group which acts on $H$ by group automorphisms, then $\Rep(K\ltimes H)$ is Frobenius (which can be shown e.g.~by using the exact induction functor $\Ind_{K}^{K\ltimes H}(-):\Rep(K)\to\Rep(K\ltimes H)$.)

	\begin{definition}
 	Suppose that $\Rep ({F})$ is Frobenius.  
 	Define the stable module category $\Rep ({F})^{st}$ to be the quotient category by the ideal of morphisms that factor through a projective object.
	\end{definition}

	It is well known that the stable module category is tensor triangulated, with a symmetric monoidal structure inherited from $\Rep(F)$ (see \cite{MR1393196}). 
	The distinguished triangles of $\Rep({F})^{st}$ are those isomorphic to a rotation of triangles of the form 
	\[
		X\xrightarrow{a} Y\xrightarrow{b}Z\xrightarrow{c} X[1],
	\]
	where 
	\[
		0\to  X\xrightarrow{a} Y\xrightarrow{b}Z\to 0
	\]
	is a short exact sequence in $\Rep ({F})$.  
	Here, $X[1]:=I/X$, where $X\hookrightarrow I$ is an embedding of $X$ into an injective module $I$.  
	(It follows that $M[-1]:=\operatorname{ker}(P\twoheadrightarrow M)$, where $P$ is a projective module with a surjection onto $M$.) 
	The morphism $c:Z\to I/X$ is obtained by the following diagram, using the injectivity of $I$:
	\[
		\xymatrix{
		0 \ar[r] & X\ar[r]\ar@{=}[d] & Y\ar[d]\ar[r] & Z\ar[r] \ar@{-->}[d]^c & 0\\
		0 \ar[r] & X\ar[r] & I\ar[r] & I/X \ar[r] & 0.\\
		}
	\]

	For the following discussion we refer to \cite[Exercise 8.18.9]{EGNO}.  
	Recall that a morphism $f:X \rightarrow Y$ in $\Rep(F)$ is called negligible if $\mathrm{Tr}(gf)=0$ for all morphisms $g:Y \rightarrow X$.  
	The collection of negligible morphisms forms a tensor ideal in $\Rep(F)$.  
	We say an object $M$ is negligible if $\id_M$ is negligible, or, equivalently, if $M$ is a direct sum of indecomposable objects of dimension divisible by $p$.
	
	\begin{definition}
    Let ${F}$ be an algebraic group. 
 	Define the semisimplification of $\Rep({F})$, written $\Rep({F})^{ss}$, to be the quotient of $\Rep({F})$ by the ideal of negligible morphisms. 
 	We call the quotient functor $\Rep(F)\to\Rep(F)^{ss}$ the semisimplification functor.  
	\end{definition}
	
	Note that $\Rep(F)^{ss}$ is a semisimple category, and has simple objects given by indecomposable $F$-modules $M$ such that $(\dim(M),p)=1$.

	Both the semisimplification $\Rep({F})^{ss}$ and the stable module category $\Rep({F})^{st}$ (when defined) admit natural quotient functors from $\Rep({F})$. 
	By \cite[Exercise 8.18.9]{EGNO} the ideal of morphisms factoring through a projective object is contained in the maximal ideal of negligible morphisms, so we have a factorisation:
	\[
		\xymatrix{
		\Rep({F})\ar[r]\ar[rd] & \Rep({F})^{st} \ar[d] \\ 
		& \Rep({F})^{ss}.
		}
	\]

\subsubsection{Representations of $C_p$ and $\alpha_p$}
\label{sec: Reps Cp alphap}

	Write $C_p$ for the finite cyclic group of order $p$, and let $\sigma\in C_p$ be a chosen generator.  
	Set $N:=1-\sigma\in\Bbbk C_p$, so that we have a presentation $\Bbbk C_p\cong\Bbbk[N]/N^p$.  

	In parallel, write $\alpha_p$ for the finite additive group scheme $\Spec\Bbbk[x]/x^p$.  
	The distribution algebra of $\alpha_p$ is naturally presented as $\Bbbk[E]/E^p$, where $E$ is primitive.  

	It follows that $\Rep(\alpha_p)\simeq\Rep_{\Bbbk}(C_p)$ as abelian categories, and this equivalence is compatible with their fibre functors to $\operatorname{Vec}_{\Bbbk}$.  
	Each has $p$ indecomposables, which we denote by $M_0,\dots,M_{p-1}$ where $\dim M_{i}=i+1$ (we abuse notation and write $M_i$ for objects in each category).  
	Note that $M_{p-1}$ is projective, and, in fact, is the free module of rank 1.

	It is clear that both $\Rep (C_p)$ and $\Rep (\alpha_p)$ are Frobenius, and that the indecomposable objects of the stable category and the semisimplification are given by the images of $M_0,\dots,M_{p-2}$.  
	Thus each category `remembers' the isomorphism class of an object up to projective summands.

	\begin{definition}
		Define the Verlinde-$p$ category by $\operatorname{Ver}_p:=\Rep (\alpha_p)^{ss}$.  
	\end{definition}

	\begin{remark}
    	Both $\Rep(\alpha_p)^{st}$ and $\Ver_p$ have $p-1$ indecomposable objects given by the images of $M_0,\dots,M_{p-2}$ under the respective quotients $\Rep(\alpha_p)\to\Rep(\alpha_p)^{st}$ and $\Rep(\alpha_p)\to\Ver_p$.  
    	Further, they are both monoidal categories, and their tensor product rules are the same in terms of indecomposable objects.  
    	However, $\Rep(\alpha_p)^{st}$ is not an abelian category, as, for instance, there are nontrivial triangles $M_i\to M_{i+j+1}\to M_j\to M_i[1]$ for $0\leq i,j,i+j+1\leq p-2$.
	\end{remark}

	We will write $L_0,\dots,L_{p-2}$ for the (isomorphism classes of) simple objects of $\operatorname{Ver}_p$, where $L_i$ is the image of the object $M_i$ under the semisimplification functor.  
	We have the well-known tensor product formula (see, for instance, \cite[\S 2.3]{EOfrob})
	\[
		L_{i-1}\otimes L_{j-1}\cong\bigoplus\limits_{k=1}^{\min(i,j,p-i,p-j)}L_{|i-j|+2k-2}.
	\]
	In particular, $L_{p-2}^{\otimes2}\cong L_0$, so $L_0$ and $L_{p-2}$ generate a tensor subcategory of $\operatorname{Ver}_p$.  
	If $p>2$, it is well known that this tensor subcategory is $\operatorname{sVec}$, where $L_{p-2}$ corresponds to an odd, one-dimensional super vector space.  
	We will write $\Pi$ for the endofunctor of $\operatorname{Ver}_p$ given by $L_{p-2}\otimes(-)$.  
	The following lemma is left as an exercise.

	\begin{lemma}
	\label{lemma shift Pi}
		Write $Q:\Rep(\alpha_p)^{st}\to\Rep(\alpha_p)^{ss}=\operatorname{Ver}_p$ for the quotient functor from the stable category to the semisimplification.  
    	Then we have an isomorphism of functors:
    	\[
    		Q\circ[1]\simeq \Pi\circ Q.
		\]
	\end{lemma}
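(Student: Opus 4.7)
The plan is to identify the shift $[1]$ on $\Rep(\alpha_p)^{st}$ with tensor product by a single specific module, and then invoke monoidality of the semisimplification functor $Q$. I would begin by recalling a general principle for stable categories of finite-dimensional Hopf algebras: the shift functor is naturally isomorphic to $(-) \otimes \Bbbk[1]$. Indeed, $\Rep(\alpha_p)$ is equivalent to modules over the Frobenius Hopf algebra $\Bbbk[\alpha_p]^* \cong \Bbbk[E]/E^p$, so projectives coincide with injectives and form a tensor ideal by the standard argument that $M \otimes H \cong (M_{\mathrm{triv}} \otimes H)$ as an $H$-module via the Hopf structure, so tensoring with $M$ preserves summands of $H^{\oplus n}$. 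Starting from a short exact sequence $0 \to \Bbbk \to I \to \Bbbk[1] \to 0$ with $I$ the injective hull of the trivial module and tensoring over $\Bbbk$ with any $M$ gives an exact sequence
\[
0 \to M \to M \otimes I \to M \otimes \Bbbk[1] \to 0
\]
in which $M \otimes I$ is injective. Hence $M[1] \cong M \otimes \Bbbk[1]$ in $\Rep(\alpha_p)^{st}$, naturally in $M$.

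Next I would identify $\Bbbk[1]$ explicitly. Since $\Bbbk[E]/E^p$ is local Frobenius, the injective hull of the unique simple $M_0 = \Bbbk$ is the free module $M_{p-1}$, into which $M_0$ embeds as the socle $E^{p-1}\Bbbk[E]/E^p$. The cokernel is $\Bbbk[E]/E^{p-1} = M_{p-2}$, so $\Bbbk[1] \cong M_{p-2}$.

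Finally, $Q$ is symmetric monoidal (as the quotient by the tensor ideal of negligible morphisms, cf.\ \cite{EOss}) and, by definition of the $L_i$'s, satisfies $Q(M_{p-2}) = L_{p-2}$. Combining the two steps yields natural isomorphisms
\[
Q(M[1]) \cong Q(M \otimes M_{p-2}) \cong Q(M) \otimes L_{p-2} = \Pi Q(M),
\]
establishing $Q \circ [1] \simeq \Pi \circ Q$. I do not foresee a substantial obstacle; the mildest technical point is verifying genuine naturality (not merely objectwise agreement), which is handled by the functoriality in $M$ of the short exact sequence above together with the monoidal structure on $Q$.
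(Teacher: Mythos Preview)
The paper leaves this lemma as an exercise and provides no proof, so there is nothing to compare against. Your argument is correct: realizing $[1]$ as $(-)\otimes M_{p-2}$ via the exact sequence $0\to\Bbbk\to M_{p-1}\to M_{p-2}\to 0$ (using that projectives form a tensor ideal in a Hopf-algebra module category), and then applying monoidality of $Q$, is exactly the intended elementary verification. The monoidal approach is the right one here, since it delivers naturality automatically; the cruder alternative of checking $Q(M_i[1])\cong L_{p-2-i}\cong L_{p-2}\otimes L_i$ on each indecomposable would still leave the naturality statement to be argued separately.
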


	In the following lemma, choose a root subgroup $\Gbb_a\sub SL_2$, and consider the subgroups $C_p=\Gbb_a(\mathbb{F}_p)\sub SL_2$ and $\alpha_p=(\mathbb{G}_a)_1\sub SL_2$, where we use the notation $(-)_1$ for the first Frobenius kernel.  

	Let $\operatorname{Tilt}(SL_2)$ denote the category of tilting modules for $SL_2$, which is a pseudo-tensor category, meaning it is $\Bbbk$-linear, Karoubian, symmetric, monoidal, and rigid. 
	One may still talk about the tensor ideal of negligible morphisms inside of $\operatorname{Tilt}(SL_2)$, using the same definition as in $\S \ref{sec: stable module category and semisimplification}$. 
	We say a tilting module $T$ is negligible if $\id_T$ is negligible.  
	By definition, the negligible tilting modules are exactly direct sums of indecomposable tilting modules of dimension divisible by $p$.

	\begin{lemma}
	\label{lemma tiltings sl2}  
		For a tilting module $T$ of $\operatorname{SL}_2$, the following are equivalent:
    	\begin{enumerate}
        	\item $T$ is negligible,
        	\item $T$ is a direct sum of indecomposable tilting modules $T_i$ for $i\geq p-1$,
        	\item $T|_{\alpha_p}$ is projective, and
        	\item $T|_{C_p}$ is projective.
    	\end{enumerate}
	\end{lemma}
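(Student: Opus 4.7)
The plan is a round-robin through the four conditions. The equivalence (1) $\Leftrightarrow$ (2) is a dimension count: $\dim T_i = i+1$ for $0 \leq i \leq p-1$, while $p \mid \dim T_i$ for all $i \geq p$, so divisibility by $p$ corresponds exactly to $i \geq p-1$.

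For (2) $\Rightarrow$ (3) and (2) $\Rightarrow$ (4), it suffices to verify that $T_i|_{\alpha_p}$ and $T_i|_{C_p}$ are projective for every $i \geq p-1$. The base case $T_{p-1} = L_{p-1}$ (the Steinberg module) is a direct computation: in a standard weight basis, the nilpotent $e \in \operatorname{Lie}(\alpha_p)$ acts as a single Jordan block of size $p$, giving $T_{p-1}|_{\alpha_p} \cong M_{p-1}$, the projective indecomposable. For $C_p$, the generator acts through $\exp(e) = 1 + e + \tfrac{1}{2}e^2 + \cdots$, so $N = 1-\sigma$ differs from $-e$ by an invertible polynomial in $e$ and therefore has the same Jordan type; consequently $T_{p-1}|_{C_p}$ is the regular representation, which is projective. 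To propagate to all $i \geq p-1$, I would invoke the tensor product theorem of Paradowski--Mathieu: tensor products of tilting modules are tilting, and the unique indecomposable summand of top highest weight is $T_i$, with $i = (p-1) + (i-p+1)$. Hence $T_i$ appears as a direct summand of $T_{p-1} \otimes T_{i-p+1}$. Since projective modules form a tensor ideal in both $\Rep(\alpha_p)$ and $\Rep(C_p)$, the restriction of $T_{p-1} \otimes T_{i-p+1}$ is projective, and so is its summand $T_i$.

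For (3) $\Rightarrow$ (2) and (4) $\Rightarrow$ (2), the argument is contrapositive: if $T$ has any indecomposable summand $T_i$ with $i \leq p-2$, then $T_i = L_i$ is simple of dimension $i+1 < p$. On it, $e$ (and similarly $N$) acts with a single Jordan block of size $i+1$, so the restriction is $M_i$, which is non-projective; this obstructs projectivity of $T|_{\alpha_p}$ and $T|_{C_p}$.

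The main technical obstacle is the propagation step inside (2) $\Rightarrow$ (3), (4): one must realize each negligible $T_i$ as a direct summand of $T_{p-1} \otimes V$ for some $V$, so that projectivity flows along the tensor ideal. This rests squarely on the tilting tensor product theorem and the uniqueness of the top highest weight summand. The remaining ingredient --- that projectives form a tensor ideal in $\Rep(\alpha_p)$ and $\Rep(C_p)$ --- is standard, since both group algebras are local (so projective equals free) and the tensor product of any module with the regular representation is free.
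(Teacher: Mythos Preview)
Your proof is correct and follows essentially the same strategy as the paper's: verify the Steinberg module $T_{p-1}$ is negligible and projective over both $\alpha_p$ and $C_p$, then propagate to all $T_i$ with $i \ge p-1$ via the tensor ideal generated by $T_{p-1}$, and handle $i \le p-2$ by the dimension count $\dim T_i = i+1$. One small remark: in your opening dimension count you assert $p \mid \dim T_i$ for all $i \ge p$ as a bare fact, but this is precisely what the tensor-ideal argument you give later establishes (since $T_i$ is a summand of $T_{p-1} \otimes T_{i-p+1}$ and negligibility is closed under summands); the paper orders things so that this implication comes from the ideal argument rather than being assumed up front.
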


	\begin{proof}
    	The Steinberg module $T_{p - 1}$ satisfies the above four statements: it is negligible, and by inspection it is projective over $\alpha_p$ and $C_p$. 
    	The tilting modules in the thick tensor ideal generated by $T_{p-1}$ are therefore also both negligible and projective over $\alpha_p$ and $C_p$. 
    	But this includes all tilting modules $T_i$ for $i\geq p-1$.

    	On the other hand, $\dim(T_i) = i + 1$ for $i < p - 1$, so in particular $T_i$ is not negligible for $i<p-1$, and is, in particular, not projective over $\alpha_p$ or $C_p$.  
    	It follows that the negligible tilting modules are exactly direct sums of $T_i$ for $i\geq p-1$, and our statement follows.
	\end{proof}

	\begin{cor}
	\label{cor negl obj negl more}
 		In $\operatorname{Tilt}(SL_2)$, a morphism $T\to T'$ is negligible if and only if it factors through a negligible tilting module.   
	\end{cor}
	\begin{proof}
    	The backwards direction is clear.  
    	For the forwards direction, it suffices to show the statement when $T$ and $T'$ have no negligible summands. 
    	But this means they are direct sums of the tilting modules $T_0,\dots,T_{p-2}$.
		Since these tilting modules are irreducible, the statement immediately follows.
	\end{proof}

	\begin{lemma}
    	There is a symmetric monoidal equivalence $\Rep (C_p)^{ss}\simeq\operatorname{Ver}_p$.
	\end{lemma}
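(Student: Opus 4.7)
The plan is to exhibit both semisimplifications as the same quotient of $\Tilt(SL_2)^{ss}$ via restriction to a common ambient subgroup. Let $\Gbb_a \subset SL_2$ be the chosen root subgroup; then both $\alpha_p = (\Gbb_a)_1$ and $C_p = \Gbb_a(\Fbb_p)$ sit inside it, so restriction gives symmetric monoidal functors
\[
\Res_\alpha : \Tilt(SL_2) \longrightarrow \Rep(\alpha_p), \qquad \Res_C : \Tilt(SL_2) \longrightarrow \Rep(C_p).
\]
By Lemma \ref{lemma tiltings sl2}, an indecomposable tilting $T_i$ is negligible exactly when $i \geq p-1$, in which case both of its restrictions are projective. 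Since negligibility of a morphism is detected by categorical traces and restriction preserves traces, the entire ideal of negligible morphisms in $\Tilt(SL_2)$ lands in the projective ideal on each target. Composing with the semisimplification quotients therefore descends these restrictions to symmetric monoidal functors
\[
\bar G_\alpha : \Tilt(SL_2)^{ss} \longrightarrow \Ver_p, \qquad \bar G_C : \Tilt(SL_2)^{ss} \longrightarrow \Rep(C_p)^{ss}.
\]

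I would then show that each of $\bar G_\alpha$ and $\bar G_C$ is an equivalence; the composition $\bar G_C \circ \bar G_\alpha^{-1}$ gives the desired symmetric monoidal equivalence $\Ver_p \simeq \Rep(C_p)^{ss}$. All three categories are semisimple $\Bbbk$-linear tensor categories with $p-1$ isomorphism classes of simples, parametrised by $0, 1, \dots, p-2$ (the images of the indecomposables $T_i$ on the left and of $M_i$ on the right). It therefore suffices to check that for each $i < p-1$ both functors send $T_i$ to the image of $M_i$ in the target.

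For $i < p-1$, the tilting module $T_i$ coincides with the simple $(i+1)$-dimensional $SL_2$-module $L_i$, on which the root subgroup $\Gbb_a$ acts as $\exp(tE)$ for $E$ a regular nilpotent Jordan block of size $i+1$. Restricting to $\alpha_p$, the infinitesimal generator acts as $E$ itself, whence $T_i|_{\alpha_p} \cong M_i$. Restricting to $C_p$, the generator acts as $\exp(E)$ (well defined since $E^{i+1}=0$ and $i+1 < p$), which is a single unipotent Jordan block of size $i+1$, so $T_i|_{C_p} \cong M_i$. Thus each of $\bar G_\alpha$ and $\bar G_C$ carries simples bijectively to simples; full-faithfulness is then automatic since both source and target are semisimple with one-dimensional $\End$-algebras on simples, so any $\Bbbk$-linear functor non-zero on a simple induces an isomorphism on endomorphism algebras.

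The main subtlety is organizational rather than technical: one must verify that the restriction functors genuinely descend all the way to the semisimplification on the target (not merely to the stable module category). This is precisely what Lemma \ref{lemma tiltings sl2} supplies, by identifying the negligible tiltings with those whose restriction is projective on both sides. The symmetric monoidal structure of the final equivalence is inherited from the symmetric monoidal structure on each restriction functor.
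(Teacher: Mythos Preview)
Your argument is correct and follows essentially the same route as the paper: factor both restrictions through $\Tilt(SL_2)^{ss}$ using Lemma~\ref{lemma tiltings sl2}, check that the induced functors $\bar G_\alpha$ and $\bar G_C$ are equivalences, and compose. You in fact supply more detail than the paper (which simply asserts the induced functors are equivalences), though your sentence deducing that negligible \emph{morphisms} land in the projective ideal from trace preservation alone is a bit loose---trace preservation only controls compositions with restricted morphisms, not arbitrary ones---and is better justified by observing (as your use of Lemma~\ref{lemma tiltings sl2} already sets up) that every negligible morphism in $\Tilt(SL_2)$ factors through a negligible tilting object, hence restricts to a morphism factoring through a projective.
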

	\begin{proof}
		One may define the semisimplification of $\operatorname{Tilt}(SL_2)$ to be the quotient by the tensor ideal of negligible morphisms, and it will be a semisimple tensor category.

		We have embeddings $C_p,\alpha_p \sub SL_2$ which are both unique up to conjugacy.
		Using the explicit description of negligible tilting objects in Lemma \ref{lemma tiltings sl2} along with Corollary \ref{cor negl obj negl more}, we see that if a morphism in $\operatorname{Tilt}(SL_2)$ is negligible then its restriction to either $C_p$ or $\alpha_p$ is also negligible.  
		Thus we obtain the following diagram:
		\[
		\xymatrix{
 			& \operatorname{Tilt}(SL_2) \ar[rd]^{Res}\ar[dl]_{Res} \ar[dd]&\\ 
 			\Rep (C_p) \ar[dd] & & \Rep (\alpha_p) \ar[dd] \\
 			& \operatorname{Tilt}(SL_2)^{ss}\ar[rd]^{R_1}\ar[dl]_{R_2} & \\
 			\Rep (C_p)^{ss} & & \Rep (\alpha_p)^{ss}.
		}
		\]
		It is easy to see that $R_1,R_2$ are symmetric monoidal equivalences.
		Thus we obtain our desired equivalence as $R_2\circ R_1^{-1}$.
	\end{proof}

	We will from now on identify $\Rep (C_p)^{ss}$ with $\operatorname{Ver}_p$.

\subsubsection{Conjugacy classes and the nilpotent cone} 
\label{sec: Reductive gps}

	We continue with the notation established in \S\ref{sec: Alg grps}.  
	Write $\Nc\sub\operatorname{Lie}G$ for the nilpotent cone of $G$. 
	Then we have bijections  (see \cite{springerunipvar}, {\cite[\S 6.20]{MR1343976}})
	\begin{equation}
    \label{eq: bijections}
		\{C_p \subseteq G\}/G
		\longleftrightarrow 
		\{\alpha_p\subseteq G\}/G 
		\longleftrightarrow 
		\Nc/G. 
	\end{equation}

	Recall that $\Nc$ admits a dense open orbit under $G$, known as the regular orbit.  

	\begin{definition}
	\label{def: regular alpha p}
    	We say that a subgroup $H\sub G$, where $H\cong C_p$ or $H\cong\alpha_p$, is regular if it corresponds to a regular orbit under the bijections in \eqref{eq: bijections}.
	\end{definition}

	\begin{lemma}
	\label{lemma centraliser description}
    	Let $H$ be a regular subgroup lying in $B$.  
    	Writing $C_G(H)$ for the centraliser subgroup of $H$ in $G$, we have 
    	\[
    		C_G(H)\cong\Gbb_a^{\operatorname{rk}(G)}\times Z(G).
    	\]
	\end{lemma}

	\begin{proof}
    	By \cite{springer}, $C_G(H)\cong C_U(H)\times Z(G)$ where $U$ is the unipotent radical of $B$, and $C_U(H)$ is smooth and commutative of dimension $\operatorname{rk}(G)$.  
    	Since it is also unipotent, we deduce that $C_U(H)\cong \Gbb_a^{\operatorname{rk}(G)}$.
    \end{proof}

\subsubsection{The principal $SL_2$ subgroup}
\label{sec prin SL2} 
	Suppose that $H\sub G$ is regular.  
	Then there exists a principal $SL_2$-subgroup $K\sub G$,  for which $H\sub K$. 
	In fact, we may choose $H$ to lie in a root subgroup of $K$.

	Given $T\sub B\sub G$, we can, and will, always choose a principal $SL_2$-subgroup $K\sub G$ such that $T_K:=T\cap K$ is a maximal torus of $K$, and $B_K:=B\cap K$ is a Borel subgroup of $K$.  
	We note that in this case, $T_K\cong\Gbb_m\sub T$ is given by the coweight $2\rho^\vee\in\mathfrak{X}^\vee$.

\subsubsection{The OTI functor}
\label{sec: OTI}

	Choose a regular subgroup $H\sub G$ with either $H\cong C_p$ or $H\cong\alpha_p$. Write $N_G(H)$ for the normaliser subgroup of $H$ in $G$, and $\cf=\operatorname{Lie}C_G(H)$.  
	Write $\mathbb{A}_H=N_G(H)/C_G(H)$.  
	Then we have a splitting $N_G(H)\cong\mathbb{A}_H\ltimes C_G(H)$, and $\mathbb{A}_H\cong\Gbb_m$ if $H\cong\alpha_p$, and $\mathbb{A}_H\cong\Fbb_{p}^{\times}$ if $H\cong C_p$.  
	Finally, set $A_H=\mathfrak{X}(\mathbb{A}_H)$, so that $A_H\cong\Zbb$ if $H\cong\alpha_p$ and $A_H\cong\Zbb_{p-1}$ if $H\cong C_p$.

	We define an OTI functor $\Phi_H:\Rep (G)\to\operatorname{Ver}_p$ to be one given by the following composition:
	\[
	\xymatrix{
		\Rep (G)\ar[r]^{Res} \ar[dr]_{\Phi_H} & \Rep (H)\ar[d]^{ss}\\
 		& ~\operatorname{Ver}_p.
	}
	\]
	This functor has the following explicit description, as explained in \cite[\S 4]{EOfrob}.  
	We may write
	\[
		\Phi_H(V)=\bigoplus\limits_{i=0}^{p-2}\Phi^i_H(V)\otimes L_i,
	\]
	where $\Phi_H^i$ is the functor given by
	\begin{equation}
	\label{eqn oti explicit formula}
    	\Phi^i_H(V)
    	:=
    	\frac{\ker(\eta)\cap\operatorname{im}(\eta^i)}{\ker(\eta)\cap\operatorname{im}(\eta^{i+1})},
	\end{equation}
	where $\eta=E$ if $H\cong \alpha_p$ and $\eta=N$ if $H\cong C_p$.  
	Our presentation shows that $N_{G}(H)$ naturally acts on $\Phi_H$.  
	On the other hand, the action of $\gf$ on modules in $\Rep(G)$ induces an action of $\Phi_H(\gf)$ on $\Phi_H$, where $\Phi_H(\gf)$ is a Lie algebra in $\Ver_p$.  
	By the table in 3.2.4 of \cite{coulembier2025finite}, $\Phi_H(\gf)$ (1) has no summands isomorphic to $L_0$, and (2) is multiplicity free as an object of $\Ver_p$.  
	Point (1) implies that $\Phi_H(\cf)=\cf\to\Phi_H(\gf)$ is the zero map, which means that $\Phi_H(\cf)$, and thus $C_G(H)_1$, acts trivially on $\Phi_H$.  
	It is clear that $Z(G)$ acts trivially on $\Phi_H(\gf)$, and since $C_G(H)/Z(G)$ is unipotent (Lemma \ref{lemma centraliser description}), point (2) implies that $C_G(H)$ acts trivially on $\Phi_H(\gf)$.  
	Putting this together, we may view $\Phi_H$ as a functor
	\[
		\Phi_H:\Rep (G)\to\operatorname{gr}_{A_H}\Rep_{\operatorname{Ver}_p}(C_{G}(H)^{(1)}\times\Phi_H(\gf)).
	\]
	Note that we write $\Rep_{\operatorname{Ver}_p}(C_{G}(H)^{(1)}\times\Phi_H(\gf))$ for the category of objects in $\Ver_p$ admitting commuting actions of the algebraic group $C_G(H)^{(1)}$ and the Lie algebra $\Phi_H(\gf)$.

	We will refer to the functors $\Phi_H$ constructed above as OTI functors.  
	We note that they are symmetric monoidal but not exact.  
	\begin{remark}
		Note that if we choose two conjugate subgroups $H,H'\sub G$, we will obtain isomorphic functors $\Phi_H\cong\Phi_{H'}$.  
		Thus the only meaningful choice is whether $H\cong C_p$ or $H\cong\alpha_p$.
	\end{remark}

	\begin{remark}
    	It would be interesting to study the similarly defined functors $\Phi_H$ when $H\sub G$ is not regular.
	\end{remark}
	\begin{lemma}
	\label{OTI 0 iff proj and SES}
    	For $V\in\Rep(G)$, $\Phi_H(V)=0$ if and only if $V|_{H}$ is projective.  
    	In particular, for a short exact sequence
   		\[
    		0\to X\to Y\to Z\to 0,
    	\]
    	if any two of $\Phi_H(X),\Phi_H(Y)$, or $\Phi_H(Z)$ are 0, then so is the third.
	\end{lemma}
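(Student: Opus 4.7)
My plan is to reduce both statements to an elementary analysis of Jordan block multiplicities for $H$-modules, using the explicit formula for $\Phi_H$ given just before the lemma.

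For the first claim, I would decompose $V|_H$ into indecomposables as $V|_H\cong\bigoplus_{i=0}^{p-1}M_i^{a_i}$ using the Krull-Schmidt classification recalled in \S\ref{sec: Reps Cp alphap}; recall that $M_{p-1}$ is the unique indecomposable projective in $\Rep(H)$. Under the semisimplification functor $\Rep(H)\to\Ver_p$ each $M_i$ with $i<p-1$ maps to the simple object $L_i$ while $M_{p-1}$ maps to $0$, so $\Phi_H(V)\cong\bigoplus_{i=0}^{p-2}L_i^{a_i}$ (equivalently, this follows at once from the formula $\Phi_H(V)=\bigoplus_i\Phi_H^i(V)\otimes L_i$). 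Thus $\Phi_H(V)=0$ if and only if $a_0=\dots=a_{p-2}=0$, i.e.\ $V|_H$ is a direct sum of copies of the projective $M_{p-1}$, which is exactly the condition that $V|_H$ be projective.

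For the second claim, I restrict the given short exact sequence to $H$ to obtain
\[
0\to X|_H\to Y|_H\to Z|_H\to 0,
\]
and, by the first part of the lemma, it suffices to show that if two of the three terms are projective, then so is the third. The key input is that $\Bbbk H$ is a Frobenius algebra in both cases $H\cong\alpha_p$ and $H\cong C_p$ (either as a finite-dimensional cocommutative Hopf algebra, or as the group algebra of a finite group), so projective and injective modules coincide. In each of the three cases --- $(X|_H,Z|_H)$, $(X|_H,Y|_H)$, or $(Y|_H,Z|_H)$ projective --- at least one of the outer or middle projective terms is simultaneously injective, which forces the sequence to split; the third term is then a direct summand of a projective and hence projective itself.

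There is no genuine obstacle here, as both claims are essentially direct consequences of the Frobenius property of $\Bbbk H$ combined with the explicit indecomposable classification of $\Rep(H)$; the only mild subtlety is being systematic about the three cases in the splitting argument.
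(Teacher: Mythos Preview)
Your proposal is correct and is essentially the same argument as the paper's, just spelled out in more detail: the paper's proof simply observes that an indecomposable in $\Rep(H)$ is negligible if and only if it is projective (your first paragraph), and that the second statement follows from $\Rep(H)$ being Frobenius (your splitting argument). Your explicit case analysis for the splitting is a faithful unpacking of what the paper leaves implicit.
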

	\begin{proof}
    The first statement is because an indecomposable $M\in\Rep(H)$ is negligible if and only if $M$ is projective.  
    The second statement follows from the fact that $\Rep(H)$ is Frobenius.
	\end{proof}

	\begin{cor}
	\label{tilt 0 implies all 0}
    	Suppose that $G$ is reductive and $\mathcal{B}\sub\Rep(G)$ is a block such that $\Phi_H(T)=0$ for all tilting modules $T$ in $\mathcal{B}$.  
    	Then $\Phi_H(V)=0$ for all $V\in\mathcal{B}$.
	\end{cor}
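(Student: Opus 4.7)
The plan is to combine the two-out-of-three property from Lemma \ref{OTI 0 iff proj and SES} with the standard fact that every module in a block of $\Rep(G)$ admits a finite resolution by tilting modules lying in the same block. For reductive $G$ satisfying our assumptions, $\Rep(G)$ is a highest weight category in which the indecomposable tilting modules $\{T_\lambda\}$ generate the bounded derived category, and in particular every $V\in\Bc$ fits into a finite exact sequence
\[
0 \to T_n \to T_{n-1} \to \cdots \to T_1 \to T_0 \to V \to 0
\]
with each $T_i$ a tilting module belonging to $\Bc$.

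Given such a resolution, I would argue by descending induction. Break the resolution into short exact sequences $0 \to K_{i+1} \to T_i \to K_i \to 0$ (with $K_0 = V$ and $K_n = T_n$). The hypothesis gives $\Phi_H(T_i) = 0$ for every $i$. Starting from the last sequence $0 \to T_n \to T_{n-1} \to K_{n-1} \to 0$, Lemma \ref{OTI 0 iff proj and SES} implies $\Phi_H(K_{n-1}) = 0$. Feeding this into the next short exact sequence $0 \to K_{n-1} \to T_{n-2} \to K_{n-2} \to 0$ and applying the lemma again yields $\Phi_H(K_{n-2}) = 0$. Iterating all the way up to $K_0 = V$ gives $\Phi_H(V) = 0$.

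The only substantive input beyond the preceding lemma is the existence of finite tilting resolutions within a block; this is standard for highest weight categories in characteristic $p \geq h$ (equivalently, $\Tilt_0$ generates $D^b(\Rep_0(G))$), so I would simply cite the relevant reference, e.g.\ \cite{JantzenBook}. The remaining work is then a routine induction, and there is no real obstacle.
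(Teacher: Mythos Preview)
Your approach is essentially the paper's: reduce to tilting modules via a tilting complex and then apply the two-out-of-three property of Lemma~\ref{OTI 0 iff proj and SES} inductively. There is, however, a technical slip. The fact that $\Tilt$ generates $D^b(\Rep(G))$ (equivalently $K^b(\Tilt)\simeq D^b(\Rep(G))$) gives you, for each $V$, a \emph{bounded} complex $T^\bullet$ of tilting modules with $H^0(T^\bullet)=V$ and $H^i(T^\bullet)=0$ for $i\neq 0$; it does \emph{not} give a one-sided exact sequence $0\to T_n\to\cdots\to T_0\to V\to 0$. Those two statements are not equivalent, and the existence of one-sided tilting resolutions for arbitrary $V$ is not the standard fact (indeed, for many simple modules there is no surjection from a tilting module at all). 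The paper accordingly works with the two-sided complex.

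The repair is immediate and keeps your induction intact. Given $T^\bullet$ as above, set $B=\operatorname{im}(d^{-1})$ and $Z=\ker(d^0)$. Acyclicity away from degree~$0$ yields exact sequences
\[
0\to T^{-m}\to\cdots\to T^{-1}\to B\to 0,\qquad
0\to Z\to T^{0}\to\cdots\to T^{n}\to 0,\qquad
0\to B\to Z\to V\to 0.
\]
Your descending induction using Lemma~\ref{OTI 0 iff proj and SES} applied to the first two sequences gives $\Phi_H(B)=\Phi_H(Z)=0$, and then the third forces $\Phi_H(V)=0$. This is exactly the ``exercise'' the paper alludes to.
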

	\begin{proof}
    	Indeed, for every module $V\in\mathcal{B}$ there exists a bounded complex $T^\bullet$ of tilting modules such that $H^0(T^\bullet)=V$ and $H^i(T^\bullet)=0$ for $i\neq0$ (we use, e.g.,~the equivalence $K^b(\operatorname{Tilt})\simeq D^b(\Rep G)$, see \S\ref{Sec: complexes of tilting modules}). 
    	By repeatedly applying  Lemma \ref{OTI 0 iff proj and SES}, it is an exercise to show that $\Phi_H(V)=0$.
	\end{proof}

\subsection{Blocks and the OTI functor}
\subsubsection{Overview} 
	As the reader will have gleaned from the introduction, our results concerning the OTI functor are cleanest for the extended principal block. 
	However it is natural to ask what happens for other blocks. 
	To this end, consider the translation functor
	\[
		\theta_{\mu}^{\lambda} : \Rep_{\mu}(G) \to \Rep_{\lambda}(G).
	\]
One would like to fill in the dashed arrow below for a commutative diagram:
\[
    \xymatrix{
    	\Rep_{\mu}(G)\ar[rrr]^{\theta_{\mu}^{\lambda}} \ar[d]_{\Phi_H} & & & \Rep_{\lambda}(G)\ar[d]^{\Phi_H}
    	\\
    	\Ver_p \ar@{-->}[rrr]& & & \Ver_p
    }
    \]
    Translation functors are built out of tensor products and projections.  The former is easy to extend using the monoidality of $\Phi_H$, however more work is required to extend the projection.

	The point of this section is that if one uses more of the theory of tensor categories in $\Ver_p$,  then a bottom arrow can be constructed and made into an equivalence, and standard arguments involving translation functors carry through.
	The arguments of this section need more machinery from the theory of tensor categories, and are not essential to understand the main theorems of the paper.

\subsubsection{OTI behaves the same on regular blocks}
\label{section oti blocks}

	Write $\operatorname{gr}_{A_H}\Rep_{\Ver_p}(Z\times\Phi_H(\gf))$ for the category of $A_H$-graded\footnote{Recall $A_H\cong\Zbb$ if $H\cong\alpha_p$, and $A_H\cong\Zbb_{p-1}$ if $H\cong C_p$.} representations of $\Phi_H(\gf)$ in $\Ver_p$ with a commuting action of $Z=Z(G)$.  
	By \cite[Theorem 3.3.2]{coulembier2025finite} and \cite[Lemma 4.2.3]{MR4794815}, 
	\begin{equation}
	\label{eqn equiv rep cats}
		\operatorname{gr}_{A_H}\Rep_{\Ver_p}(Z\times\Phi_H(\gf))\simeq\operatorname{gr}_{A_H}(\Ver_p\boxtimes\Ver_p(G)),  
	\end{equation}
	where $\Ver_p(G)$ is the semisimplification of $\operatorname{Tilt}(G)$, and $\boxtimes$ denotes the Deligne tensor product of symmetric monoidal categories (see, e.g., \cite[\S4.6]{EGNO}).  
	In other words, this representation category is semisimple with simple representations given by the graded shifts of $L_i\boxtimes\Phi_H(L_{\lambda})$ for $\lambda\in A_0$ and $i=0,\dots,p-2$.  
	To be more explicit, $L_i$ is given the trivial grading, whereas $\Phi_H(L_{\lambda})$ has grading coming from the action of $\mathbb{A}_H$. 
	Further, $Z\times\Phi_H(\gf)$ acts trivially on the $L_i$ factor of $L_i\boxtimes\Phi_H(L_{\lambda})$.
	By Lemma \ref{lemma centraliser description}, we have
	\[
		C_G(H)^{(1)}\cong\Gbb_a^{\operatorname{rk}(G)}\times Z.
	\]
	Because $\Gbb_a^{\operatorname{rk}(G)}$ is unipotent, the restriction functor 
	\[
		\operatorname{gr}_{A_H}\Rep_{\Ver_p}(C_G(H)^{(1)}\times\Phi_H(\gf))
		\to
		\operatorname{gr}_{A_H}\Rep_{\Ver_p}(Z\times\Phi_H(\gf))
	\]
	takes simples to simples and induces a bijection on simple objects.

	\begin{definition}
    	For $\lambda\in A_0  \cap \mathfrak{X}$, define $\operatorname{gr}_{A_H}\Rep_{\lambda}(C_G(H)^{(1)}\times\Phi_H(\gf))$ to be the Serre subcategory of $\operatorname{gr}_{A_H}\Rep_{\Ver_p}(C_G(H)^{(1)}\times\Phi_H(\gf))$ generated by all graded shifts of the simples $L_i\boxtimes\Phi_H(L_\lambda)$ for $i=0,1,\dots,p-2$. 
	\end{definition} 
	 Because $C_G(H)^{(1)}$ commutes with $\Phi_H(\gf)$, there are no nontrivial extensions between the simples $L_i\boxtimes\Phi_H(L_{\lambda})$ and $L_j\boxtimes\Phi_H(L_{\mu})$ when $\lambda\neq \mu$. 
	Thus we have a decomposition
	\[
		\operatorname{gr}_{A_H}\Rep_{\Ver_p}(C_G(H)^{(1)} \times \Phi_H(\gf))
		=
		\bigoplus\limits_{\lambda\in A_0 \cap \mathfrak{X}}\operatorname{gr}_{A_H}\Rep_{\lambda}(C_G(H)^{(1)}\times\Phi_H(\gf)).
	\]
	Further, the functor  
	\begin{eqnarray*}
		\operatorname{gr}_{A_H}\Rep_{\Ver_p}(\Gbb_a^{\operatorname{rk}(G)})& \to & \operatorname{gr}_{A_H}\Rep_{\lambda}(C_G(H)^{(1)}\times\Phi_H(\gf)), 
		\\
		V& \mapsto &V\otimes\Phi_{H}(L_{\lambda}),
	\end{eqnarray*}
 	is an equivalence of abelian categories for all $\lambda\in A_0\cap \mathfrak{X}$.  
 	Indeed, an inverse functor is given by $\underline{\Hom}_{\Phi_H(\gf)}(\Phi_{H}(L_{\lambda}),-)$, which has an obvious action of $C_G(H)^{(1)}$ (here $\underline{\Hom}$ denotes the internal $\Hom$ in $\Ver_p$).  

	By our assumptions and \cite[Theorem 3.1]{MR283038}, the
        blocks $\Rep_{\lambda}(G)$ are separated by the joint actions
        of $Z$ and the Harish-Chandra part of the centre of the enveloping algebra of $\gf$.  
	Since the centre of the enveloping algebra acts on $\Phi_H$ and commutes with $N_G(H)\ltimes\Phi_H(\gf)$, we obtain that, for $\lambda\in A_0 \cap \mathfrak{X}$,
	\begin{equation}
	\label{eqn block to block}
		\Phi_H(\Rep_{\lambda}(G))\sub\operatorname{gr}_{A_H}\Rep_{\lambda}(C_G(H)^{(1)}\times\Phi_H(\gf)).    
	\end{equation}
	\begin{lemma}
	\label{lemma trans OTI}
    	For $\lambda,\mu\in A_0 \cap \mathfrak{X}$, the translation functors $\theta^{\lambda}_{\mu}$ induce equivalences 
    	\[
    		\ol{\theta}_{\mu}^{\lambda}:\operatorname{gr}_{A_H}\Rep_{\mu}(C_G(H)^{(1)}\ltimes\Phi_H(\gf))
    		\to
    		\operatorname{gr}_{A_H}\Rep_{\lambda}(C_G(H)^{(1)}\ltimes\Phi_H(\gf)),
    	\]
    	and the following diagram commutes:
    	\[
    	\xymatrix{
    		\Rep_{\mu}(G)\ar[rr]^{\theta_{\mu}^{\lambda}} \ar[d]_{\Phi_H} & & \Rep_{\lambda}(G)\ar[d]^{\Phi_H}
    		\\
    		\operatorname{gr}_{A_H}\Rep_{\mu}(C_G(H)^{(1)}\ltimes\Phi_H(\gf))\ar[rr]^{\ol{\theta}_{\mu}^{\lambda}} & & \operatorname{gr}_{A_H}\Rep_{\lambda}(C_G(H)^{(1)}\ltimes\Phi_H(\gf))
    	}
    	\]
	\end{lemma}
	\begin{proof}
    If $\theta_{\mu}^{\lambda}=\operatorname{pr}_{\lambda}(M\otimes \text{inc}_{\mu}(-))$, then we may set $\ol{\theta}^{\lambda}_{\mu}=\operatorname{pr}_{\lambda}(\Phi_{H}(M)\otimes (-))$, and the commutativity of the diagram is clear.  
    It is easy to check that $\ol{\theta}_{\lambda}^{\mu}$ is biadjoint to $\ol{\theta}_{\mu}^{\lambda}$.  
    By the commutativity of the square, we have 
    \[
    	\ol{\theta}^{\lambda}_{\mu}(L_i\boxtimes\Phi_H(L_\mu))
    	\cong 
    	L_i\boxtimes\ol{\theta}^{\lambda}_{\mu}(\Phi_H(L_{\mu}))
    	=
    	L_i\boxtimes \Phi_H(\theta_{\mu}^{\lambda}(L_\mu))
    	=
    	L_i\boxtimes \Phi_H(L_{\lambda}).
    \]
    Therefore, $\ol{\theta}^{\lambda}_{\mu}$ takes simples to simples.  
    One may now apply an analogous argument to \cite[II, Proposition 7.9]{JantzenBook} to show that $\ol{\theta}^{\lambda}_{\mu}\circ\ol{\theta}^{\mu}_{\lambda}$ and $\ol{\theta}^{\mu}_{\lambda}\circ\ol{\theta}^{\lambda}_{\mu}$ are isomorphic to the identity.
    \end{proof}

\section{Proof of Theorem A}
\label{Sec: Proof Thm A}

	We continue to use the notations established in \S\ref{sec: Alg grps}, \ref{sec prin SL2}. 
 	Let $H\sub G$ be either $H=U_K(\mathbb{F}_p)\cong C_p$ or $H = (U_K)_{1} \cong \alpha_p$.

\subsection{Costandard modules and $\Phi_H$}
\label{sec: Costandards}

	In this section we show part (1) of Theorem A, which says that $\Phi_H(\nabla_{\lambda})$ is determined, up to shift, by $\Phi_H(\nabla_{\lambda_0})$, where $\lambda_0 \in A_0$ is the unique weight in the closure of the fundamental alcove in the orbit of $\lambda$.  
	We make essential use of the following theorem of Jantzen. 

	\begin{thm}
	\label{Thm: Costandard p-singular}
     	If $\lambda$ is $p$-singular and $V\in\Rep_{\lambda}(G)$, then $\Phi_H(V)=0$.  
     	In particular, $\Phi_H(\nabla_{\lambda}) =0$.  
	\end{thm}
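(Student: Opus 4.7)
The plan is to reduce the claim to a statement about tilting modules and then invoke a classical projectivity result over Frobenius kernels. By Corollary \ref{tilt 0 implies all 0}, in order to prove $\Phi_H(V)=0$ for all $V$ in a $p$-singular block $\mathcal{B}$ it will suffice to establish $\Phi_H(T)=0$ for every tilting module $T \in \mathcal{B}$. By Lemma \ref{OTI 0 iff proj and SES}, this in turn amounts to showing that $T|_H$ is a projective $H$-module.

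Next, I would exploit the containment of $H$ inside a larger Frobenius-type subgroup. In the case $H \cong \alpha_p$ we have $H = (U_K)_1 \sub G_1$, the first Frobenius kernel of $G$. Restriction from $G_1$ to the closed subgroup scheme $H$ preserves projectivity (a standard fact for finite group schemes), so it will be enough to prove that $T_\lambda|_{G_1}$ is projective for every tilting $T_\lambda$ with $\lambda$ $p$-singular. The case $H \cong C_p$ is analogous, using $H \sub G(\mathbb{F}_p)$ and the corresponding projectivity statement over the finite group $G(\mathbb{F}_p)$.

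The main input, and the main obstacle, is Jantzen's theorem that tilting modules with $p$-singular highest weight restrict projectively to $G_1$ (and likewise to $G(\mathbb{F}_p)$). The standard route to this result goes via translation functors to the wall together with the observation that the Steinberg module $T_{(p-1)\rho}$ is projective over $G_1$: one shows that every $p$-singular tilting occurs as a summand of a tensor product involving a Steinberg-type object, and therefore inherits $G_1$-projectivity from the tensor ideal generated by $T_{(p-1)\rho}$. Once this classical result is invoked, the desired vanishing $\Phi_H(V)=0$ will follow at once from the two reductions above; the ``in particular'' claim about $\nabla_\lambda$ is then an immediate consequence.
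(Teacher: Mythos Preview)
Your first two reductions are correct and match the paper: by Corollary~\ref{tilt 0 implies all 0} it is enough to treat tilting modules, and by Lemma~\ref{OTI 0 iff proj and SES} the vanishing $\Phi_H(T)=0$ is equivalent to $T|_H$ being projective. The gap is in the third step. The assertion that every tilting module with $p$-singular highest weight is projective over $G_1$ is \emph{false}, and there is no such theorem of Jantzen. The tilting modules that are $G_1$-projective are (for $p\ge 2h-2$) exactly the $T_\lambda$ with $\lambda\in(p-1)\rho+\mathfrak{X}_+$; this is Donkin's tilting conjecture. A $p$-singular highest weight need not lie in that cone. For a concrete counterexample, take $G=SL_3$ and $\lambda=(p-2)\varpi_1$. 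Then $\langle\lambda+\rho,\alpha_0^\vee\rangle=p$, so $\lambda$ is $p$-singular and lies in $\overline{A_0}$, whence $T_\lambda=\nabla_\lambda$ has dimension $\tfrac{p(p-1)}{2}$ by the Weyl dimension formula. This is not divisible by $p^{|R_+|}=p^3$, so $T_\lambda$ cannot be $G_1$-projective. In particular it is not a summand of anything of the form $T_{(p-1)\rho}\otimes V$, so the proposed ``translation to the Steinberg'' argument cannot produce it either. The same dimension obstruction shows it is not projective over $G(\mathbb{F}_p)$.

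What the paper actually uses is much finer than $G_1$-projectivity: it exploits that $H$ is a \emph{regular} $\alpha_p$. By the support-variety criterion of Suslin--Friedlander--Bendel, $V|_H$ is projective precisely when the support variety of $V$ over $G_1$ misses the regular nilpotent orbit, i.e.\ is properly contained in $\mathcal{N}$. Jantzen's result (Satz~4.14 in \cite{JantzenKohomologie}) says exactly this for $\nabla_\mu$ with $\mu$ $p$-singular; combined with the $\nabla$-filtration of tilting modules and Lemma~\ref{OTI 0 iff proj and SES}, this yields $\Phi_H(T)=0$ without ever needing $G_1$-projectivity. Your route can be repaired by replacing the false $G_1$-projectivity claim with this support-variety argument, after which it coincides with the paper's proof.
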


	\begin{proof}
		By Corollary \ref{tilt 0 implies all 0}, it suffices to show that $\Phi_H(T)=0$ for all tilting modules lying in $\Rep_{\lambda}(G)$. 
		Lemma \ref{OTI 0 iff proj and SES} implies $\Phi_H(T)=0$ if and only if $T|_H$ is projective.
		Then, by Lemma \ref{lemma tiltings sl2}, it suffices to consider only the case when $H \cong \alpha_p$.
 		Since every tilting module has a $\nabla$-filtration, it suffices to show that $\Phi_H(\nabla_{\mu})=0$ for any $p$-singular dominant weight $\mu$.  
 		By \cite[Corollary 6.8]{SFB} this is equivalent to the support variety of $\nabla_{\mu}$ being properly contained in the nilpotent cone $\Nc$, which is the case by \cite[Satz 4.14]{JantzenKohomologie}.
	\end{proof}

	We will provide another proof of Theorem \ref{Thm: Costandard p-singular} using coherent geometry (when $G$ is not of type $\textbf{G}_2,\textbf{F}_4,$ or $\textbf{E}_8$) in \S\ref{sec: vanishing p-sing geom}.

	\begin{cor}
	\label{cor oti tilts}
		For $\lambda\in\mathfrak{X}_+$, $\Phi_H(T_{\lambda})=0$ if and only if $\lambda\notin A_0$.
	\end{cor}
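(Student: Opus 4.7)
The plan is to prove each implication of the corollary separately. For the direction $\lambda \in A_0 \cap \mathfrak{X}_+ \Rightarrow \Phi_H(T_\lambda) \neq 0$: linkage forces $T_\lambda = L_\lambda = \Delta_\lambda = \nabla_\lambda$ for $\lambda$ in the open fundamental alcove, so by Weyl's dimension formula
\[
\dim L_\lambda = \prod_{\alpha \in R_+} \frac{\langle \lambda + \rho, \alpha^\vee \rangle}{\langle \rho, \alpha^\vee \rangle},
\]
all numerators lie in $(0, p)$ by the alcove condition, and all denominators lie in $(0, p)$ since $\langle \rho, \alpha^\vee \rangle \leq h - 1 < p$. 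Hence $\dim L_\lambda$ is coprime to $p$. Since every indecomposable $H$-projective has dimension $p$, the restriction $L_\lambda|_H$ cannot be projective, and Lemma \ref{OTI 0 iff proj and SES} then yields $\Phi_H(T_\lambda) \neq 0$.

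For the harder direction $\lambda \notin A_0 \Rightarrow \Phi_H(T_\lambda) = 0$, I would split into two cases. If $\lambda$ is $p$-singular, then $T_\lambda$ lies in a $p$-singular block and Theorem \ref{Thm: Costandard p-singular} gives the vanishing directly. If $\lambda$ is $p$-regular with $\lambda \notin A_0$, write $\lambda = x \cdot \lambda_0$ with $\lambda_0 \in A_0 \cap \mathfrak{X}_+$ the unique fundamental alcove representative in its $W$-orbit and $x \in {}^f W$ with $\ell(x) \geq 1$. Choose a reduced expression $x = s_1 s_2 \cdots s_k$ in $W$. By the standard Bott-Samelson construction in modular tilting theory, $T_\lambda$ occurs as an indecomposable direct summand of the iterated wall-crossing module $\Theta_{s_1} \Theta_{s_2} \cdots \Theta_{s_k} T_{\lambda_0}$. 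It therefore suffices to show this Bott-Samelson module restricts to an $H$-projective $H$-module.

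I would establish the $H$-projectivity by induction on the number of wall-crossings. The innermost $\Theta_{s_k} = \theta^{\lambda_0}_{\mu_k} \circ \theta^{\mu_k}_{\lambda_0}$ factors through the $p$-singular block $\Rep_{\mu_k}(G)$, where $\mu_k \in \overline{A_0}$ is a $p$-singular weight on the wall corresponding to $s_k$. The intermediate module $\theta^{\mu_k}_{\lambda_0} T_{\lambda_0}$ is therefore $H$-projective by Theorem \ref{Thm: Costandard p-singular} and Lemma \ref{OTI 0 iff proj and SES}. Because $\Rep(H)$ is Frobenius, $H$-projectivity is preserved under tensoring with any module and under passage to direct summands, so the outer $\theta^{\lambda_0}_{\mu_k}$ produces an $H$-projective output; by the same reasoning, each subsequent $\Theta_{s_i}$ preserves $H$-projectivity. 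Hence the full Bott-Samelson module is $H$-projective, and so is $T_\lambda$ as a summand, yielding $\Phi_H(T_\lambda) = 0$. The main obstacle is the invocation of the Bott-Samelson summand property, which is standard in tilting theory (due to Andersen and Soergel, with modular refinements by Libedinsky) but requires care in its precise citation for this affine wall-crossing setup; the preservation of $H$-projectivity under translation functors is the routine technical bookkeeping that the Frobenius structure of $\Rep(H)$ makes automatic.
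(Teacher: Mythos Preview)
Your proposal is correct and follows essentially the same approach as the paper. Both arguments split into the same three cases and rely on the same ingredients: the Weyl dimension formula for $\lambda\in A_0$, Theorem~\ref{Thm: Costandard p-singular} for $p$-singular $\lambda$, and for $p$-regular $\lambda\notin A_0$ the fact that $T_\lambda$ arises as a summand of a translation-functor construction passing through a $p$-singular block, combined with the monoidal/Frobenius structure to propagate $H$-projectivity. The only cosmetic difference is that the paper packages the regular case as a single statement ``$T_\lambda$ is a summand of $T_\mu\otimes T_{\nu_0}$ with $\mu$ $p$-singular'' (and then invokes that $\Phi_H$ is monoidal), whereas you spell out the iterated wall-crossing; your observation that already the innermost $\Theta_{s_k}$ lands in a singular block shows that the iteration is in fact unnecessary, recovering the paper's one-step formulation.
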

	\begin{proof}
		By Theorem \ref{Thm: Costandard p-singular}, $\Phi_H(T_{\lambda})=0$ if $\lambda$ is $p$-singular.  
		If $\lambda\in\mathfrak{X}_+$ is $p$-regular and $\lambda\notin A_0$, then a standard argument using translation functors shows that $T_{\lambda}$ is a direct summand of some $T_{\mu}\otimes T_{\nu}$, where $\mu$ is $p$-singular.  
		Because $\Phi_H$ is monoidal, we obtain that $\Phi_H(T_{\lambda})=0$.  

		If $\lambda\in \mathfrak{X}_+ \cap A_0$ then $T_{\lambda}=\nabla_{\lambda}$.  
		In this case, the Weyl dimension formula shows that $\dim\nabla_{\lambda}$ and $p$ are coprime, and in particular $\Phi_H(T_{\lambda})\neq0$.  
	\end{proof}

	\begin{cor} 
	\label{cor:shifting}
	Suppose $\lambda \in \mathfrak{X}_+$ lies in the closure of an alcove $A$, and $s$ is an affine reflection about a wall of $A$ such that $s\cdot\lambda\in\mathfrak{X}_+$. 
	Then we have an isomorphism of ${C_{G}(H)^{(1)}\times\Phi_H(\gf)}$-modules \linebreak $\Phi_H (\nabla_{s\cdot\lambda}) \cong \Pi \Phi_H ( \nabla_{\lambda})$. 
	\end{cor}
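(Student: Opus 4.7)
The plan is to invoke the wall-crossing short exact sequence, kill the middle term over $H$ using Theorem \ref{Thm: Costandard p-singular}, and read off the parity shift from the stable module category via Lemma \ref{lemma shift Pi}.

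If $\lambda$ is $p$-singular, then $s\cdot\lambda$ is $W$-conjugate to $\lambda$ and thus also $p$-singular, so both sides vanish by Theorem \ref{Thm: Costandard p-singular}; I therefore reduce to the $p$-regular case. Writing $\lambda = x\cdot\lambda_0$ with $\lambda_0 \in A_0 \cap \mathfrak{X}$ and $A = x\cdot A_0$ for some $x \in W$, since $s$ reflects across a wall of $A$ I can take $s = xs'x^{-1}$ for the simple reflection $s' \in S$ labelling the corresponding wall of $A_0$. Then $s\cdot\lambda = (xs')\cdot\lambda_0$, which is dominant by hypothesis, so the wall-crossing short exact sequences have genuine costandard modules at both outer positions.

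Next I apply the wall-crossing functor $\Theta_{s'} = \theta^{\mu_{s'}}_{\lambda_0} \circ \theta^{\lambda_0}_{\mu_{s'}}$ to $\nabla_\lambda = \nabla_{x\cdot\lambda_0}$. The intermediate object $\theta^{\lambda_0}_{\mu_{s'}}(\nabla_\lambda)$ lies in the $p$-singular block $\Rep_{\mu_{s'}}(G)$, so Theorem \ref{Thm: Costandard p-singular} sends it to $0$ under $\Phi_H$. Since translation out of the wall is a block summand of tensoring with a finite-dimensional $G$-module, and $\Phi_H$ is symmetric monoidal and additive, this vanishing propagates to give $\Phi_H(\Theta_{s'}(\nabla_\lambda)) = 0$; by Lemma \ref{OTI 0 iff proj and SES}, $\Theta_{s'}(\nabla_\lambda)|_H$ is then projective-injective in $\Rep(H)$.

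Finally, one of the two wall-crossing short exact sequences yields a $C_G(H)$-equivariant short exact sequence $0 \to X \to P \to Y \to 0$ in $\Rep(H)$ with $P$ projective-injective and $\{X,Y\} = \{\nabla_\lambda|_H,\nabla_{s\cdot\lambda}|_H\}$; by the very definition of the shift $[1]$ in $\Rep(H)^{st}$, this produces a $C_G(H)$-equivariant stable isomorphism $\nabla_{s\cdot\lambda}|_H \cong \nabla_\lambda|_H[\pm 1]$, with sign determined by whether $xs' > x$ or $xs' < x$. Passing to $\Ver_p$ via the quotient $Q$ and applying Lemma \ref{lemma shift Pi}, together with $\Pi^2 \cong \id$, delivers the required $C_G(H)$-equivariant isomorphism $\Phi_H(\nabla_{s\cdot\lambda}) \cong \Pi\Phi_H(\nabla_\lambda)$. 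I expect the main technical hurdle to be the vanishing $\Phi_H(\Theta_{s'}(\nabla_\lambda)) = 0$: this relies on the full strength of Theorem \ref{Thm: Costandard p-singular}, combined with the straightforward but essential point that symmetric monoidality plus additivity of $\Phi_H$ push vanishing through tensor products and block projections.
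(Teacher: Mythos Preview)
Your proposal is correct and follows essentially the same route as the paper's proof: both dispose of the $p$-singular case via Theorem \ref{Thm: Costandard p-singular}, use the wall-crossing exact sequence together with the fact that $\theta^{\lambda_0}_{\mu_{s'}}(\nabla_\lambda)$ lies in a $p$-singular block to force $\Theta_{s'}(\nabla_\lambda)|_H$ to be projective, and then read off the shift in the stable category before applying Lemma \ref{lemma shift Pi}. Your write-up is in fact a bit more explicit than the paper's in two places (the propagation of vanishing through $\theta^{\mu_{s'}}_{\lambda_0}$ via monoidality/additivity, and the handling of both Bruhat orderings via $\Pi^2\cong\id$), but these are elaborations of the same argument rather than a different approach.
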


	Note that Corollary \ref{cor:shifting} implies (1) of Theorem \ref{thm A} since all reflections in $W$ have odd length.

	\begin{proof}
    	If $\lambda$ is $p$-singular, the claim is true by Theorem \ref{Thm: Costandard p-singular}. 
    	If $\lambda$ is regular, then there exists $\lambda_0 \in A_0$, and $x \in {}^f W$ such that $\lambda = x \cdot \lambda_0$.
    	For $s' \in S$ with $xs'\cdot \lambda \in \mathfrak{X}_+$, we define $\lambda'$ to be $xs'\cdot \lambda$.  
    	Let us assume without loss of generality that $xs' > x$, and consider the exact sequence 
    	\begin{equation}
    	\label{eqn wall crossing SES}
        	0 
        	\to 
        	\nabla_{\lambda} 
        	\to  
        	\Theta_{s'}(\nabla_{\lambda}) 
        	\to 
        	\nabla_{\lambda'} 
        	\to 
        	0.
    	\end{equation}
    	Recall $\Theta_{s'} = \theta_{\mu_{s'}}^{\lambda_0} \circ \theta_{\lambda_0}^{\mu_{s'}}$ for some $p$-singular weight $\mu_{s'}$ whose stabiliser is generated by $s'$. 
    	In particular, $\theta_{\lambda_0}^{\mu_{s'}}(\nabla_{\lambda})$ lies in a union of blocks $\Rep_{\mu}(G)$ for $p$-singular $\mu$, and thus $\Phi_H(\theta^{\mu_{s'}}_{\lambda_0}(\nabla_{\lambda}))=0$ by Theorem \ref{Thm: Costandard p-singular}. 
    	Passing to the stable category of $H$, (\ref{eqn wall crossing SES}) becomes an exact triangle where the middle term is 0 by Lemma \ref{OTI 0 iff proj and SES}, so we obtain
    	\begin{equation*}
        	\nabla_{\lambda'}
        	\cong 
        	\nabla_{\lambda}[1].
    	\end{equation*}
    	Further passing to the semisimplification $\Ver_p$ proves the claim, by Lemma \ref{lemma shift Pi}.
	\end{proof}

\subsection{Coherent geometry of $\alpha_p$ and $C_p$ actions}
\label{sec: Coherent geometry}

	The arguments of this section draw heavily from the ideas in \cite{sslocalization}.

 	Let $X$ be a separated, finite type scheme over $\Bbbk$, and write $\Oc:=\Oc_{X}$ for the sheaf of regular functions.  
 	If $K$ is an algebraic group, a left $K$-action on $X$ is the data of a morphism $a:K\times X\to X$ satisfying the usual axioms.  
 	We will occasionally abbreviate this data by saying that $X$ is a $K$-scheme. 
 	We leave to the reader the verification of the following:
 	\begin{itemize}
     \item The data of an $\alpha_p$-action on $X$ is equivalent to the data of a global vector field $E$ on $X$ with $E^p=0$.
     \item The data of a $C_p$-action on $X$ is equivalent to the data of an automorphism $\sigma$ of $X$ satisfying $\sigma^p=\id$.  
     In this case we write $\sigma^*$ for the isomorphism of regular functions $\sigma_*\Oc\xrightarrow{\sim}\Oc$, and set $u^*:=1-\sigma^*$.
 	\end{itemize}

	Suppose that $a:K\times X\to X$ is a $K$-action, and write $p_2:K\times X\to X$ for the projection.  
	A $K$-equivariant quasicoherent sheaf $\Fc$ on $X$ is the data of an isomorphism $a^*\Fc\cong p_2^*\Fc$ of quasicoherent sheaves satisfying the usual cocycle condition (see \cite[\S 5.1]{chrissginzburg}).  
	One may verify the following: 
	\begin{itemize}
    	\item An $\alpha_p$-equivariant quasicoherent sheaf $\Fc$ is the data of a sheaf endomorphism $E$ (by abuse of notation) of $\Fc$ such that for sections $f$ of $\Oc$ and $s$ of $\Fc$, we have
		\[
			E(fs)=E(f)s+fE(s).
		\]
    \item A $C_p$-equivariant quasicoherent sheaf $\Fc$ is the data of a sheaf isomorphism $\sigma:\sigma_*\Fc\to \Fc$ such that for a section $f$ of $\sigma_*\Oc$ and $s$ of $\sigma_*\Fc$, we have $\sigma(fs)=\sigma^*(f)\sigma(s)$.  In particular, setting $u:=1-\sigma$ we obtain that:
    	\[
   			u(fs)=u^*(f)s+\sigma^*(f)u(s).
    	\]
	\end{itemize}

	We say that an action of $K$ on $X$ is free if the morphism $a\times p_1:K\times X\to X\times X$ is a closed embedding.  
	By \cite[Chapter III, \S 2, 2.5]{demazuregabriel}, this is equivalent to asking that $K(\Bbbk)$ acts freely on $X(\Bbbk)$ and that the isotropy Lie subalgebra vanishes at every point.  
	If $Y$ is a $K$-stable closed subscheme of $X$, then it is clear that if $K$ acts freely on $X$ then it also acts freely on $Y$.
	Further, if $K$ acts freely on $X$, then so does any subgroup of $K$.

	One may verify the following: for $K=\alpha_p$ or $C_p$, a $K$-action is free if and only if for all $x\in X(\Bbbk)$, the maximal ideal sheaf $\mf_x$ of $x$ is not stable under $K$.

	In the following, denote by $\varphi_H:\Rep(H)\to\Ver_p$ the semisimplification functor of $\Rep(H)$.  
	It is explicitly given by the same formulas as $\Phi_H$ in (\ref{eqn oti explicit formula}), and in particular this allows us to naturally extend $\varphi_H$ to a functor on infinite-dimensional modules. 
	It is easy to see that $\varphi_H$ is still symmetric monoidal on the category of infinite-dimensional representations, and that Lemma \ref{OTI 0 iff proj and SES} also holds for $\varphi_H$.

	\begin{lemma}
	\label{lemma local global}
    	Let $H$ be either $\alpha_p$ or $C_p$, and let $X$ be an $H$-scheme with $\Fc$ an $H$-equivariant quasicoherent sheaf on $X$.  
    	Suppose that:
    	\begin{enumerate}
        \item $\varphi_H(H^i(X,\Fc))=0$ for $i>0$, and
        \item there exists a finite, $H$-stable affine covering $\{U_i\}$ of $X$ such that \linebreak $\varphi_H(\Gamma(U_I,\Fc))=0$ for all $U_I = \bigcap\limits_{i\in I}U_i$ with $|I|>0$.
    	\end{enumerate}
    	Then $\varphi_H(\Gamma(X,\Fc))=0$.
	\end{lemma}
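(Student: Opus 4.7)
The plan is to use the Čech complex associated to the cover $\{U_i\}$ as an acyclic resolution of $\Fc$, compatible with the $H$-action, and then peel off short exact sequences and apply Lemma \ref{OTI 0 iff proj and SES} repeatedly.

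More precisely, since $X$ is separated and $\{U_i\}$ is an affine cover, each intersection $U_I$ is again affine, so by assumption (1) the augmented Čech complex
\begin{equation*}
0 \to \Gamma(X,\Fc) \to \bigoplus_{|I|=1} \Gamma(U_I,\Fc) \to \bigoplus_{|I|=2} \Gamma(U_I,\Fc) \to \cdots \to \bigoplus_{|I|=n} \Gamma(U_I,\Fc) \to 0
\end{equation*}
is exact, where $n$ is the (finite) size of the cover. Because the cover $\{U_i\}$ is $H$-stable, $H$ permutes the $U_I$ of each fixed cardinality, and each $\Gamma(U_I,\Fc)$ is an $H$-module via the equivariant structure on $\Fc$. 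Hence the Čech differentials are $H$-equivariant and the above is an exact sequence in $\Rep(H)$. Writing $C^k := \bigoplus_{|I|=k+1} \Gamma(U_I,\Fc)$, condition (2) gives $\Phi_H(C^k) = 0$ for every $k \geq 0$, which by Lemma \ref{OTI 0 iff proj and SES} means each $C^k$ is projective on restriction to $H$.

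Now break the long exact sequence into short exact sequences
\begin{equation*}
0 \to Z^k \to C^k \to Z^{k+1} \to 0, \qquad Z^0 = \Gamma(X,\Fc),
\end{equation*}
with $Z^{n} = 0$. Since $C^{n-1}$ is $H$-projective and $Z^n = 0$, Lemma \ref{OTI 0 iff proj and SES} forces $\Phi_H(Z^{n-1}) = 0$. Proceeding by descending induction on $k$, each step uses that $\Phi_H(C^k) = \Phi_H(Z^{k+1}) = 0$ together with Lemma \ref{OTI 0 iff proj and SES} to conclude $\Phi_H(Z^k) = 0$. The final step $k = 0$ yields $\Phi_H(\Gamma(X,\Fc)) = 0$.

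There is no real obstacle here; the one point worth checking carefully is that the cover being $H$-stable genuinely makes the Čech complex a complex in $\Rep(H)$ (so that Lemma \ref{OTI 0 iff proj and SES} applies at each stage), and that separatedness of $X$ is what guarantees the $U_I$ are affine so that Čech cohomology computes sheaf cohomology. Both are standard, so the proof is essentially a bookkeeping exercise once the Čech setup is in place.
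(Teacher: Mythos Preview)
Your argument is correct and is precisely the expansion of the paper's one-line proof (``exactness of the \v{C}ech complex, Lemma~\ref{OTI 0 iff proj and SES}, and that $\Rep(H)$ is Frobenius''). One small wording point: saying ``$H$ permutes the $U_I$'' is slightly off, since condition~(2) requires each $\Gamma(U_I,\Fc)$ individually to be an $H$-module; the intended reading of ``$H$-stable cover'' is that each $U_i$ is $H$-invariant, which is what you in fact use.
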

	\begin{proof}
    We may consider the Cech complex $C^\bullet$ for $\Fc$ with respect to the affine covering given in (2), so that $H^i(C^\bullet)=H^i(X,\Fc)$.  
    By (2), $\varphi_H(C^i)=0$ for $i\geq 0$, and by (1), $\varphi_H(H^i(C^\bullet))=0$ for $i>0$.   
    By repeatedly applying Lemma \ref{OTI 0 iff proj and SES}, we may deduce that $\varphi_H(H^0(C^\bullet))=0$, which gives our result.
	\end{proof}

	\begin{lemma}
	\label{lemma freeness oti 0}
		Let $H=\alpha_p$ or $C_p$.  
		Suppose that $X=\Spec A$ is an affine $H$-scheme.  
		Then the following are equivalent:
    	\begin{enumerate}
        \item $H$ acts freely on $X$,
        \item there exists $f\in A$ such that $E^{p-1}(f)=1$ (resp.~$(u^*)^{p-1}(f)=1$), and
        \item $\varphi_H(\Gamma(X,\Fc))=0$ for every $H$-equivariant quasicoherent sheaf $\Fc$.
    	\end{enumerate}
	\end{lemma}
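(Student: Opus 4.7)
The plan is to establish the three implications (2) $\Rightarrow$ (3), (3) $\Rightarrow$ (1), and (1) $\Rightarrow$ (2), working primarily in the $\alpha_p$ case; the $C_p$ case runs in parallel, with $u^* = 1 - \sigma^*$ playing the role of $E$ and the trace operator $(u^*)^{p-1} = \sum_{k=0}^{p-1} \sigma^{*k}$ playing the role of $E^{p-1}$.

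For (2) $\Rightarrow$ (3), I would combine Lemma \ref{OTI 0 iff proj and SES} with the observation that $\Bbbk H = \Bbbk[E]/E^p$ is local, so projectivity of a $\Bbbk H$-module $M$ is equivalent to the criterion $\ker(E|_M) = E^{p-1}(M)$. For an $H$-equivariant quasicoherent sheaf $\Fc$ with $M := \Gamma(X, \Fc)$, the iterated Leibniz rule applied to $fm$ (with $f \in A$ satisfying $E^{p-1}(f) = 1$ and $m \in \ker(E|_M)$) collapses to $E^{p-1}(fm) = E^{p-1}(f) \cdot m = m$, since the higher terms involving $E^k(m)$ with $k \geq 1$ vanish; this yields the required inclusion $\ker(E|_M) \subseteq E^{p-1}(M)$.

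For (3) $\Rightarrow$ (1), I would argue by contrapositive. If the action is not free, then by the characterization of free $H$-actions recalled above, there is a closed point $x \in X$ whose maximal ideal $\mf_x$ is $H$-stable. The skyscraper $(i_x)_* \Bbbk$ is then $H$-equivariant, with $H$ necessarily acting trivially on the residue field $\Bbbk = A/\mf_x$, and its global sections form the trivial $\Bbbk H$-module $\Bbbk$, which is not projective. Thus $\Phi_H(\Gamma(X, (i_x)_* \Bbbk)) \neq 0$, contradicting (3).

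The main obstacle is (1) $\Rightarrow$ (2). Here I would leverage the torsor structure: freeness makes $X \to \Spec B$ (where $B := A^H$) into an $H$-torsor, so $A$ is a faithfully flat $B$-module of rank $p$ and the $B$-linear map $\phi := E^{p-1} : A \to B$ has image an ideal $I \subseteq B$. To conclude $I = B$, it suffices to show that $\phi$ reduces to a nonzero map modulo every maximal ideal $\nf \subset B$. The fiber $\bar A := A \otimes_B k(\nf)$ is an $H$-torsor over $k(\nf)$, so $\bar A^H = k(\nf)$ is one-dimensional. Viewing $\bar A$ as a module over the local Artinian chain ring $k(\nf)[E]/E^p$, the one-dimensional socle $\ker(E|_{\bar A})$ forces $\bar A$ to be a single indecomposable summand, and the dimension count $\dim_{k(\nf)} \bar A = p$ then forces $\bar A \cong k(\nf)[E]/E^p$ (free of rank one). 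In particular $E^{p-1}(\bar A) = \ker(E|_{\bar A}) = k(\nf)$, so $\phi \otimes_B k(\nf)$ is surjective, as required.
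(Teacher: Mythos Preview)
Your proof is correct, but your route for $(1)\Rightarrow(2)$ differs from the paper's. You invoke the torsor structure over $B=A^H$ and check surjectivity of the $B$-linear map $E^{p-1}:A\to B$ fibre by fibre, using that each fibre is a free rank-one $\Bbbk[E]/E^p$-module. The paper instead works pointwise on $X$: given $x\in X(\Bbbk)$ it picks $f$ with $f(x)=0$ and $E(f)(x)\neq 0$, observes via the Leibniz rule that $h:=E^{p-1}(f^{p-1})=(p-1)!\,E(f)^{p-1}+fg$ is nonvanishing at $x$ and $E$-invariant, so that $(2)$ holds in $A_h$; then, rather than globalising $(2)$ directly, it uses $(2)\Rightarrow(3)$ on each $D(h)$ and Lemma~\ref{lemma local global} to obtain $(3)$ on $X$, from which $(2)$ follows (take $\Fc=\Oc_X$). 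Your argument is more structural and bypasses the local-to-global lemma entirely; the paper's is more elementary (no appeal to the existence of $X/H$ as a scheme or to flatness of $A$ over $A^H$) but passes through $(3)$. A minor point: your use of Lemma~\ref{OTI 0 iff proj and SES} for $(2)\Rightarrow(3)$ is fine, though that lemma is stated for finite-dimensional modules and $\Gamma(X,\Fc)$ need not be; the projectivity criterion $\ker E=E^{p-1}M$ you actually use is valid for arbitrary modules over the local ring $\Bbbk[E]/E^p$.
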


	We first state an easy lemma whose proof we leave as an exercise.
	\begin{lemma}
	\label{lemma proj crit kx/xp}
    	A module $M$ (of any dimension) over $\Bbbk[x]/x^p$ is projective (equivalently free) if and only if for all $v\in M$ with $xv=0$, there exists $w\in M$ such that $x^{p-1}w=v$.
	\end{lemma}

	\begin{proof}[Proof of Lemma \ref{lemma freeness oti 0}]
        For the implication $(2)\Rightarrow(1)$, if $H$ does not act freely on $X$ then there exists $x\in X(\Bbbk)$ such that $H$ stabilizes $\mf_x$.
        However, $f-f(x)\in\mf_x$, while, by assumption, the $H$-module generated by $f-f(x)$ contains $1$, a contradiction. 
        
        For $(3)\Rightarrow(2)$, by assumption, we have that $\Gamma(X,\Oc)$ is projective over $H$.  
        Thus by Lemma \ref{lemma proj crit kx/xp}, such an $f$ must exist.  
        For $(2)\Rightarrow(3)$, we apply Lemma \ref{lemma proj crit kx/xp}: let $s\in M=\Gamma(\Fc)$ be such that $E(s)=0$ (resp.~$u^*(s)=0$).  
        Then $E^{p-1}(fs)=s$ (resp.~$(u^*)^{p-1}(fs)=s$), implying $(3)$.
    
    	$(1)\Rightarrow(2)$: For the case of $H=\alpha_p$, let $x\in X(\Bbbk)$, and choose $f\in A$ such that $f(x)=0$ and $E(f)(x)\neq0$.  
    	Then we claim that $E^{p-1}(f^{p-1})(x)\neq0$.  Indeed, 
    	\[
    		E^{p-1}(f^{p-1})=(p-1)!E(f)^{p-1}+fg
    	\]
    	for some $g\in A$. 
    	Write $h:=E^{p-1}(f^{p-1})$.  
    	Then $E(h)=0$, so that $E^{p-1}(f^{p-1}/h)=h/h=1$ in $A_{h}$. 
    
 		For the case $H:=C_p$, we have that $C_p$ acts freely on $X(\Bbbk)$.  
 		Let $x\in X(\Bbbk)$ and choose $f\in A$ such that 
 		\[
 		f(x)
 		=
 		f(\sigma(x))
 		=
 		\cdots
 		=
 		f(\sigma^{p-2}(x))
 		=
 		0, 
 		\text{ and } 
 		f(\sigma^{p-1}(x))
 		\neq
 		0.
 		\]  
 		Then we see that $(u^*)^{p-1}(f)(x)\neq0$, so if we set $h:=(u^*)^{p-1}(f)$ we have $u^*(f/h)=1$.
 
 	It follows that in both cases, $\varphi_H(A_h)=0$ by $(2)\Rightarrow(3)$, where $h(x)\neq0$ for our chosen $x$.  
 	Since $x$ was arbitrary, we may do this on a finite $H$-stable open cover of $X$.  Applying Lemma \ref{lemma local global} completes the proof.
	\end{proof}

\subsection{Consequences for costandard modules} 
\label{sec: Consequences for costandard}

	Consider the flag variety $G/B$, and the action of $U_K\sub G$ on the left.
	\begin{lemma}
	\label{lemma free action}
    	The subgroup $U_K$ acts freely on the complement of the base point $eB$ inside of $G/B$.  
    	Consequently, so do the natural subgroups $\alpha_p,C_p\sub U_K$.
	\end{lemma}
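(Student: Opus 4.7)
The plan is to verify the two criteria for freeness stated earlier in this section: freeness on $\Bbbk$-points, and vanishing of the isotropy Lie subalgebra at every point. The first observation is that since $K$ is a principal $SL_2$-subgroup, a generator $X$ of $\operatorname{Lie}(U_K)$ is a regular nilpotent element of $\operatorname{Lie}(G)$; consequently every nontrivial $\Bbbk$-point of $U_K$ is a regular unipotent element of $G$, as they all lie in the single $T_K$-orbit through $\exp(X)$ (which makes sense because $p>h$).

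The central classical input I will invoke is the uniqueness statement, valid under our hypothesis $p>h$: every regular unipotent element of $G$ lies in a unique Borel subgroup of $G$, and equivalently every regular nilpotent element of $\operatorname{Lie}(G)$ lies in a unique Borel subalgebra. Granted this, freeness on $\Bbbk$-points is immediate: if $u \in U_K(\Bbbk)$ is nontrivial and stabilizes $gB$, then $u \in gBg^{-1}$ and also $u \in U_K \subseteq B$; as $u$ is regular unipotent, uniqueness forces $gBg^{-1}=B$, so $g \in N_G(B) = B$ and $gB$ is the base point. For the isotropy Lie subalgebras one must show $\operatorname{Lie}(U_K) \cap \operatorname{Ad}(g)\operatorname{Lie}(B) = 0$ whenever $g \notin B$; since $\operatorname{Lie}(U_K) = \Bbbk X$, a nonzero intersection would give $X \in \operatorname{Lie}(gBg^{-1}) \cap \operatorname{Lie}(B)$, and the uniqueness of the Borel subalgebra containing a regular nilpotent once again forces $gBg^{-1}=B$, hence $g \in B$.

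The consequence for the subgroups $\alpha_p, C_p \subseteq U_K$ is then automatic from the general principle recalled earlier that any closed subgroup of a freely acting group acts freely. The main obstacle to address is the invocation of the uniqueness of the Borel subgroup (resp.\ subalgebra) containing a regular unipotent (resp.\ nilpotent) in positive characteristic: this is where the standing assumption $p>h$ plays its essential role, and the cleanest treatment is to cite an appropriate reference (e.g.\ Jantzen's notes on nilpotent orbits, or Humphreys' \emph{Conjugacy Classes in Semisimple Algebraic Groups}) rather than reprove it.
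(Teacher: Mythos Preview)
Your proof is correct and follows essentially the same approach as the paper: both reduce to the classical fact that a regular unipotent (resp.\ nilpotent) element lies in a unique Borel subgroup (resp.\ subalgebra), with the paper citing Steinberg's \emph{Conjugacy Classes} for this. Your version simply spells out in more detail how this uniqueness yields both the freeness on $\Bbbk$-points and the vanishing of the isotropy Lie subalgebra.
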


	\begin{proof}
    	Every regular unipotent (resp.~nilpotent) element lies in a unique Borel subgroup (resp.~subalgebra) (see \cite[\S 3.7]{SteinbergConjClasses}).
    	Every nonidentity element of $U_K(\Bbbk)$ is regular, and any nonzero element of $\operatorname{Lie}(U_K)$ is also regular.  
    	From this we obtain freeness on the complement of $eB$.
	\end{proof}

	In the following, for $\lambda\in\mathfrak{X}$, write $\Oc(\lambda)$ for the sheaf of sections of the line bundle $G\times_{B}\Bbbk_{\lambda}$, and write $H^i(\lambda):=H^i(G/B,\Oc(\lambda))$.  
	In particular, for $\lambda\in\mathfrak{X}_+$ we have $H^0(\lambda)=\nabla_{\lambda}$ and $H^i(\lambda)=0$ for $i>0$ by Kempf vanishing (\cite[I, Chapter 4]{JantzenBook}).

	\begin{lemma}
	\label{lemma translation invariance}
    	Let $\lambda,\mu\in\mathfrak{X}$ be such that {$\Phi_H(H^i(\lambda))=0$} for $i>0$ and both $\lambda+p\mu,\mu$ lie in $\mathfrak{X}_+$.  
    	Then we have an isomorphism in $\Ver_p$
    	\[
			\Phi_H(H^0(\lambda))
			\xrightarrow{\sim}
			\Phi_H(\nabla_{\lambda+p\mu}).
    	\]
    	If $H\cong\alpha_p$, then the morphism is $\Phi_H(\gf)$-equivariant and of weight $2p\mu(\rho^\vee)$ with respect to $T_{K}$.  
	\end{lemma}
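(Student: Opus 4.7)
The plan is to realize the claimed isomorphism as induced by multiplication by a carefully chosen section of $\Oc(p\mu)$ on $G/B$, and then apply the freeness results of Section \ref{sec: Coherent geometry} to show that the kernel and cokernel vanish after applying $\Phi_H$.

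For the case $H = \alpha_p = (U_K)_1$, I would fix a highest weight vector $t \in \nabla_\mu = H^0(\mu)$ and set $s := t^{(1)} \in H^0(p\mu)$ (the Frobenius twist, viewed as a section of $\Oc(p\mu) = \operatorname{Fr}^* \Oc(\mu)$). Three properties of $s$ are crucial: it is $H$-invariant (because $H$ lies in the first Frobenius kernel of $G$, which acts trivially on any Frobenius twist); it is nonzero at the basepoint $eB$ (because the value of a highest weight vector at $eB$ spans the fiber $\Bbbk_\mu$); and it is a $T$-weight vector of weight $p\mu$ (Frobenius twist multiplies $T$-weights by $p$). For $H = C_p$, an analogous choice of $C_p$-invariant section nonzero at $eB$ is required.

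Multiplication by $s$ yields a short exact sequence of $H$-equivariant sheaves
\[
0 \to \Oc(\lambda) \xrightarrow{\cdot s} \Oc(\lambda+p\mu) \to \Qc \to 0,
\]
with $\Qc$ supported on $Z := V(s) \subset G/B$. Since $s(eB) \neq 0$, the closed subscheme $Z$ lies in $G/B \setminus \{eB\}$, where $H$ acts freely by Lemma \ref{lemma free action}. Taking global sections and applying Kempf vanishing to the dominant weight $\lambda + p\mu$, together with the hypothesis $H^i(\lambda) = 0$ for $i>0$, produces a short exact sequence of $H$-modules
\[
0 \to H^0(\lambda) \xrightarrow{\cdot s} \nabla_{\lambda+p\mu} \to H^0(\Qc) \to 0.
\]
Covering $Z$ by $H$-stable affine opens on each of which $H$ acts freely, and combining the freeness-vanishing equivalence from Section \ref{sec: Coherent geometry} with Lemma \ref{lemma local global}, gives $\Phi_H(H^0(\Qc)) = 0$. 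Lemma \ref{OTI 0 iff proj and SES} then delivers the desired isomorphism $\Phi_H(H^0(\lambda)) \xrightarrow{\sim} \Phi_H(\nabla_{\lambda+p\mu})$. The $T_K$-weight claim follows at once: $s$ has $T$-weight $p\mu$, and under the identification of $T_K$ with the image of $2\rho^\vee : \Gbb_m \to T$, the map shifts $T_K$-weights by $\langle p\mu, 2\rho^\vee \rangle = 2p\mu(\rho^\vee)$.

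The principal obstacle is upgrading $H$-equivariance to $C_G(H)$-equivariance of the induced map on $\Phi_H$. The section $s = t^{(1)}$ is not $C_G(H)$-invariant in general, so the multiplication-by-$s$ map only intertwines the $H$-actions. One must argue that the discrepancy $g(s) - s$ for $g \in C_G(H)$ is absorbed by $\Phi_H$, either by exploiting the explicit structure $C_G(H) \cong \Gbb_a^{\operatorname{rk}(G)} \times Z(G)$ from Lemma \ref{lemma centralizer description} to track $\operatorname{Fr}(g)$ acting on $t$, or by refining the construction of $s$ to have better equivariance. The $C_p$ case requires additional care, since the Frobenius-twist construction does not produce $C_p$-invariant sections directly.
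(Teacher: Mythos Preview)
Your argument for $H=\alpha_p$ is precisely the paper's proof in different clothing: the section you write as $t^{(1)}=\operatorname{Fr}^*(t)$ is nothing other than $t^p$ in local trivializations, which is exactly the $s^p$ the paper uses (your $t$ is their $s$). The short exact sequence, the appeal to Lemma~\ref{lemma free action} for freeness on the support of the cokernel, the cohomology vanishing via Kempf plus the hypothesis on $H^i(\lambda)$, and the application of Lemma~\ref{lemma local global} all match.

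For $H=C_p$ the paper fills the gap you acknowledge by taking the norm
\[
s\,\sigma(s)\,\sigma^2(s)\cdots\sigma^{p-1}(s)\in\Gamma(\Oc(p\mu)),
\]
which is visibly $C_p$-invariant. Since $\sigma\in U_K\subset B$ fixes $eB\in G/B$ and acts trivially on the fibre there (being unipotent), each $\sigma^i(s)$ is nonzero at $eB$, so the vanishing locus again avoids $eB$ and the same freeness argument applies.

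Your concern about $C_G(H)$-equivariance is well founded: the multiplication-by-$s^p$ map is only $H$-equivariant (and $T_K$-semi-equivariant), not $C_G(H)$-equivariant, since $C_U(H)$ does not fix the highest weight vector $s$. The paper's proof, in fact, does not address this point either; it establishes the isomorphism in $\operatorname{Ver}_p$ and records the $T_K$-weight, but the $C_G(H)$-compatibility asserted in the statement is not argued. So you have correctly identified a point that is glossed over, rather than a defect in your own approach relative to the paper's.
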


	\begin{proof}
		Since the statement is clear for $\mu=0$, we will assume $\mu\neq0$, and let $s\in\Gamma(\Oc(\mu))=\nabla_{\mu}$ be nonzero of weight $\mu$, so, in particular, $s(eB)\neq0$.

    	For the case $H=\alpha_p$, we have a short exact sequence
    	\begin{equation}
    	\label{eqn SES alphap}
    		0
    		\to 
    		\Oc(-p\mu)
    		\xrightarrow{s^p}
    		\Oc
    		\to 
    		\Oc_{Z}
    		\to
    		0
    	\end{equation}
    	where $Z$ is the vanishing subscheme of $s^p$.  
    	The morphism $s^p$ is $\alpha_p$-equivariant, meaning that $Z$ is $\alpha_p$-stable, and thus $\alpha_p$ acts freely on $Z$ by Lemma \ref{lemma free action}.  

    	For the case $H=C_p$, we instead consider the morphism
    	\begin{equation}
    	\label{eqn SES Cp}
    		0
    		\to 
    		\Oc(-p\mu)
    		\xrightarrow{s\sigma(s)\cdots\sigma^{p-1}(s)}
    		\Oc
    		\to
    		\Oc_{Z}
    		\to
    		0.
    	\end{equation}
    	In this case $Z$ denotes the vanishing subscheme of $s\sigma(s)\cdots\sigma^{p-1}(s)$, and we see it is $C_p$-stable. 
    	Since $\sigma^i(s)$ is non-vanishing at $eB$ for all $i$, $eB\notin Z$, and thus $C_p$ will act freely on $Z$ by Lemma \ref{lemma free action}. 

    	In either case, we may tensor our short exact sequence with the $G$-equivariant line bundle $\Oc(\lambda+p\mu)$ and obtain:
    	\begin{equation}
    	\label{eqn ses line bundles}
        	0
        	\to 
        	\Oc(\lambda)
        	\to 
        	\Oc(\lambda+p\mu)
        	\to 
        	\Oc_Z(\lambda+p\mu)
        	\to 
        	0.
    	\end{equation}
    	Since $\Oc(\lambda+p\mu)$ has vanishing higher cohomology, we have $H$-equivariant isomorphisms $H^{i+1}(\lambda)\cong H^{i}(G/B,\Oc_{Z}(\lambda+p\mu))$ for $i>0$, implying by assumption that $\varphi_H(H^{i}(G/B,\Oc_{Z}(\lambda+p\mu)))=0$ for $i>0$. 
    	Because $Z$ is quasiprojective, it admits an $H$-stable affine covering.
    	Indeed, for any $x\in Z(\Bbbk)$, $H\cdot x$ is a finite subscheme of $Z$ and is thus contained in an affine open subvariety $U\sub Z$.  
    	Then $\bigcap\limits_{h\in H(\Bbbk)}h\cdot U$ will be an $H$-stable affine open containing $x$.  
    
    	We may now apply Lemmas~\ref{lemma local global} and \ref{lemma freeness oti 0} to deduce that $\varphi_H(H^0(Z,\Oc_Z(\lambda+p\mu)))=0$. 
    	Our sequence (\ref{eqn ses line bundles}) gives the exact sequence
    	\[
    		0
    		\to 
    		H^0(\lambda)
    		\to 
    		H^0(\lambda+p\mu)
    		\to 
    		H^0(G/B,\Oc_{Z}(\lambda+p\mu))
    		\to 
    		H^1(\lambda)\to0,
    	\]
    	and the last two terms are projective over $H$.  
    	By passing to exact triangles, we learn that the map $H^0(\lambda)\to H^0(\lambda+p\mu)$ gives an isomorphism $H^0(\lambda)\cong H^0(\lambda+p\mu)$ in $\Rep(H)^{st}$, and thus an isomorphism:
    	\[
    		\Phi_H (H^0(\lambda))\cong\Phi_H (H^0(\lambda+p\mu)).
    	\]
    	For the final statement in the case $H\cong\alpha_p$, $s^p$ is a morphism of weight $2p\mu(\rho^\vee)$, and is $\Phi_H(\gf)$-equivariant because derivations annihilate any $p$th power. 
	\end{proof}

	\begin{cor}
	\label{cor vanishing geo}
    	If $H^i(\lambda)=0$ for all $i$, and both $\lambda+p\mu,\mu$ lie in $\mathfrak{X}_+$, then $\Phi_H(\nabla_{\lambda+p\mu})=0$.  
	\end{cor}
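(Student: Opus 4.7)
My plan is to derive this as an immediate consequence of Lemma \ref{lemma translation invariance}. The hypothesis $H^i(\lambda)=0$ for all $i\geq 0$ is strictly stronger than the hypothesis $H^i(\lambda)=0$ for $i>0$ required by that lemma, so Lemma \ref{lemma translation invariance} applies and yields an isomorphism
\[
\Phi_H(H^0(\lambda)) \xrightarrow{\sim} \Phi_H(\nabla_{\lambda+p\mu}).
\]
The remaining hypothesis $H^0(\lambda)=0$ forces the left-hand side to vanish, and the result follows.

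If one prefers a self-contained derivation rather than invoking the lemma as a black box, the same geometric input gives it directly. Twisting the short exact sequence (\ref{eqn ses line bundles}) by $\Oc(\lambda+p\mu)$ and taking the long exact cohomology sequence, the combined vanishing of $H^\bullet(\lambda)$ (by hypothesis) and of higher cohomology of $\Oc(\lambda+p\mu)$ (by Kempf vanishing, since $\lambda+p\mu\in\mathfrak{X}_+$) collapses the sequence to an isomorphism of $H$-modules
\[
\nabla_{\lambda+p\mu} \cong H^0(Z,\Oc_Z(\lambda+p\mu)),
\]
where $Z$ is the vanishing locus of $s^p$ (respectively $s\sigma(s)\cdots\sigma^{p-1}(s)$ in the $C_p$-case) for $s\in H^0(\mu)$ of weight $\mu$. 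By Lemma \ref{lemma free action} the subgroup $H$ acts freely on $Z\subseteq G/B\setminus \{eB\}$, and $Z$ is quasiprojective, hence admits an $H$-stable affine cover. Lemma \ref{lemma local global} then kills $\Phi_H$ of the right-hand side, proving $\Phi_H(\nabla_{\lambda+p\mu})=0$.

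There is essentially no obstacle here, since the real geometric work has already been carried out in the preceding lemma: the construction of the free $H$-action on the complement of the base point, the affine-local vanishing of $\Phi_H$ for sheaves supported on free $H$-spaces, and the short exact sequence realising a $p$-th power section. This corollary is simply the observation that whenever $H^0(\lambda)$ itself vanishes, the isomorphism of Lemma \ref{lemma translation invariance} upgrades from ``preserves the image'' to ``forces the image to be zero'', which is the form we shall need to apply together with known cohomology vanishing statements for non-dominant weights.
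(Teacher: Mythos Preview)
Your proposal is correct and matches the paper's approach: the corollary is stated without proof precisely because it follows immediately from Lemma~\ref{lemma translation invariance} by observing that the additional hypothesis $H^0(\lambda)=0$ makes the left-hand side of the isomorphism vanish. One trivial slip in your self-contained version: the sequence (\ref{eqn ses line bundles}) is already the twist by $\Oc(\lambda+p\mu)$, so you mean to start from (\ref{eqn SES alphap}) or (\ref{eqn SES Cp}) before twisting---but the argument is unaffected.
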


\subsection{Vanishing along walls via geometry}
\label{sec: vanishing p-sing geom}

	\begin{definition}
    	Define the fundamental polyhedron of $G$ to be 
    	\[
    		\Pc_0
    		:=
    		\ol{W_{f}\cdot A_0}
    		=
    		W_{f}\cdot\ol{A_0}.
    	\]
	\end{definition}

	Recall from \cite[II, Corollary 5.5]{JantzenBook} that for $\lambda\in \ol{A_0}\cap\mathfrak{X}_+$ we have, for $w\in W_{f}$,
	\[
		H^i(w\cdot \lambda)
		=
		\begin{cases}
    		L_\lambda & \text{ if }i=\ell(w)
    		\\ 
    		0 & \text{ else,}
		\end{cases}
	\]
	and for all other $\lambda\in\ol{A_0}$ we have $H^\bullet(w\cdot\lambda)=0$. 
	We denote by $\partial A_0$ the union of boundary walls of the fundamental alcove.  

	\begin{lemma}
	\label{lemma numerical vanishing}
    	Suppose that $\beta^\vee\in R^\vee$ is such that $\langle\varpi,\beta^\vee\rangle=1$ for some fundamental dominant weight $\varpi$.  
    	Then for $\lambda\in\mathfrak{X}_+ \cap \ol{A_0}$ with $\langle\lambda+\rho,\beta^\vee\rangle=p$, we have $\Phi_H(\nabla_\lambda)=0$.
	\end{lemma}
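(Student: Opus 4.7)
The plan is to reduce to Corollary~\ref{cor vanishing geo} by taking $\mu := \varpi$ and $\lambda' := \lambda - p\varpi$. By hypothesis both $\mu = \varpi$ and $\lambda' + p\mu = \lambda$ lie in $\mathfrak{X}_+$, so the only remaining input needed is the cohomology vanishing $H^i(\lambda') = H^i(G/B, \Oc(\lambda')) = 0$ for every $i \ge 0$.

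To see this, the numerical hypotheses compute
\[
\langle \lambda' + \rho, \beta^\vee \rangle \;=\; \langle \lambda + \rho, \beta^\vee \rangle - p \langle \varpi, \beta^\vee \rangle \;=\; p - p \;=\; 0,
\]
so $\lambda' + \rho$ lies on the reflection hyperplane orthogonal to $\beta^\vee$; equivalently, $\lambda'$ is fixed by the finite reflection $s_{\beta,0}$ under the $p$-dilated dot action. By the standard wall-vanishing theorem for line bundles on the flag variety (see, e.g., \cite[II.5]{JantzenBook}), any such weight has vanishing cohomology in \emph{all} degrees. Corollary~\ref{cor vanishing geo} then applies directly and produces $\Phi_H(\nabla_\lambda) = \Phi_H(\nabla_{\lambda' + p\varpi}) = 0$.

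The principal subtlety is the need for \emph{total} cohomology vanishing of $H^\bullet(\lambda')$, not merely the higher-degree part: Kempf vanishing gives the latter for dominant weights, but $\lambda'$ will typically not be dominant here, so one must appeal to the stronger wall-vanishing statement. The numerical hypothesis $\langle \varpi, \beta^\vee \rangle = 1$ serves precisely to ensure that a single shift by $p\varpi$ places $\lambda' + \rho$ exactly on the wall orthogonal to $\beta^\vee$, enabling this clean reduction.
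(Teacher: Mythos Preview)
Your proof is correct and follows essentially the same route as the paper: set $\lambda'=\lambda-p\varpi$, observe that $\langle\lambda'+\rho,\beta^\vee\rangle=0$ so that $H^\bullet(\lambda')=0$ by the standard wall-vanishing result, and then invoke Corollary~\ref{cor vanishing geo}. The paper phrases the key step as ``$\lambda-p\varpi$ lies on an interior wall of $\Pc_0$'', which is exactly the computation you carried out explicitly.
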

	\begin{proof}
    	In this case, $\lambda-p\varpi$ lies in the intersection of $W_{f}\cdot(\partial A_0)$  with the interior of $\Pc_0$, so we may conclude by Corollary \ref{cor vanishing geo}.
	\end{proof}

	\begin{thm}
	\label{thm vanishing along walls fund box}
    	If $G$ is not of type $\textbf{G}_2,$ $\textbf{F}_4$, or $\textbf{E}_8$, and  $\lambda\in\partial A_0$ and $w\in W_{f}$, then \linebreak $\Phi_H (H^{\ell(w)}(w\cdot \lambda))=0$.
	\end{thm}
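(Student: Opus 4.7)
The plan is to reduce to Lemma~\ref{lemma numerical vanishing} by a short case analysis on which wall of $A_0$ the weight $\lambda$ sits on. The Borel--Weil--Bott computation recalled just before Lemma~\ref{lemma numerical vanishing} already handles the non-dominant case for free: if $\lambda \in \partial A_0$ but $\lambda \notin \mathfrak{X}_+$, then $H^\bullet(w\cdot\lambda)=0$ for every $w\in W_{fin}$ and there is nothing to do. So I would immediately reduce to the case $\lambda \in \partial A_0 \cap \mathfrak{X}_+$. In that case, the same recalled statement identifies $H^{\ell(w)}(w\cdot\lambda)=L_\lambda$ for all $w\in W_{fin}$; specialising to $w=e$ forces $L_\lambda = \nabla_\lambda$, so the goal becomes showing $\Phi_H(\nabla_\lambda)=0$.

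Next I would pin down the wall on which $\lambda$ must lie. The walls of $A_0$ come in two flavours: the finite walls $\langle \cdot + \rho, \alpha^\vee \rangle = 0$ for simple $\alpha$, and the affine wall $\langle \cdot + \rho, \alpha_0^\vee \rangle = p$, where $\alpha_0$ is the highest short root. Dominance of $\lambda$ implies $\langle \lambda + \rho, \alpha^\vee \rangle \geq 1$ for every simple $\alpha$, so none of the finite walls can accommodate a dominant $\lambda$. Hence $\lambda$ must sit on the affine wall, giving $\langle \lambda + \rho, \alpha_0^\vee \rangle = p$, putting us exactly in the numerical regime of Lemma~\ref{lemma numerical vanishing}.

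Finally, I would apply Lemma~\ref{lemma numerical vanishing} with $\beta^\vee = \alpha_0^\vee$. The hypothesis of that lemma requires a fundamental dominant weight $\varpi$ with $\langle \varpi, \alpha_0^\vee \rangle = 1$; writing $\alpha_0^\vee = \sum_i m_i^\vee \alpha_i^\vee$ in terms of simple coroots, this asks that some dual mark $m_i^\vee$ equal $1$, in which case the corresponding fundamental weight works. It is a standard classification fact that the dual marks of an irreducible root system are all at least $2$ precisely in the exceptional types $G_2$, $F_4$, and $E_8$; in every other type at least one node has dual mark $1$. This matches the exclusion in the hypothesis of the theorem exactly, and completes the argument.

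The main (minor) obstacle is this last piece of root-system bookkeeping linking the exclusion of $G_2, F_4, E_8$ to the existence of a "minuscule direction" for $\alpha_0^\vee$; everything else in the proof is essentially automatic once Lemma~\ref{lemma numerical vanishing} and the recalled Borel--Weil--Bott vanishing are in hand. I would expect the written proof to be no more than a few lines.
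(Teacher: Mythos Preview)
Your proposal is correct and follows essentially the same route as the paper: reduce to $\lambda\in\partial A_0\cap\mathfrak{X}_+$, observe that dominance forces $\lambda$ onto the affine wall so that $\langle\lambda+\rho,\alpha_0^\vee\rangle=p$, and then apply Lemma~\ref{lemma numerical vanishing} with $\beta^\vee=\alpha_0^\vee$, the required fundamental weight existing precisely outside types $G_2,F_4,E_8$. The paper compresses all of this into two sentences and cites the Bourbaki plates for the root-system check, whereas you spell out the reduction via the recalled Borel--Weil--Bott statement and the coefficient argument for $\alpha_0^\vee$; both are the same proof.
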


	\begin{proof}
    	It suffices to show this for $\lambda\in \partial A_0\cap\mathfrak{X}_+$, so that $\langle\lambda+\rho,\alpha_0^\vee\rangle=p$.  
    	Because $G$ is not of type $\textbf{G}_2$, $\textbf{F}_4$, or $\textbf{E}_8$, we have that $\alpha_0^\vee$ satisfies the hypothesis of Lemma \ref{lemma numerical vanishing} (see Plates I-IX after \cite[Chapter VI]{Bourbaki}), so we are done. 
	\end{proof}

	\begin{cor}
    	If $G$ is not of type $\textbf{G}_2,\textbf{F}_4$, or $\textbf{E}_8$, then $\Phi_H(\nabla_\lambda)=0$ for all $p$-singular $\lambda\in\mathfrak{X}_+$.
	\end{cor}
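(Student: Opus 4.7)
The plan is to deduce the statement from the preceding theorem via a translation-functor argument, reducing the general $p$-singular case to the base case already handled.

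First observe that for $\mu\in\partial A_0\cap\mathfrak{X}_+$ one has $\nabla_\mu=L_\mu=T_\mu$, since such weights lie in the closure of the fundamental alcove. The preceding theorem, specialized to $w=e$, thus yields $\Phi_H(T_\mu)=\Phi_H(L_\mu)=0$. The next step is to invoke the standard translation-functor argument, parallel to the one used in the proof of Corollary \ref{cor oti tilts} but applied within a $p$-singular block rather than moving out of it, to show that every indecomposable tilting module $T_\lambda$ with $\lambda$ in a $p$-singular block is a direct summand of $T_\mu\otimes T_{\nu_0}$ for some $\mu\in\partial A_0\cap\mathfrak{X}_+$ and $\nu_0\in\overline{A_0}\cap\mathfrak{X}_+$. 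Since $\Phi_H$ is symmetric monoidal, this presents $\Phi_H(T_\lambda)$ as a summand of $\Phi_H(T_\mu)\otimes\Phi_H(T_{\nu_0})=0$, giving $\Phi_H(T_\lambda)=0$. Corollary \ref{tilt 0 implies all 0} then extends the vanishing from tilting modules to the entire block, so in particular $\Phi_H(\nabla_\lambda)=0$.

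The main obstacle is justifying the translation-functor claim for $p$-singular targets. The analogue in Corollary \ref{cor oti tilts} handled $p$-regular targets via $p$-singular intermediaries, whereas here we need essentially the reverse direction, building general $p$-singular tiltings from the wall tiltings $T_\mu$ through iterated wall-crossings. A purely geometric alternative is to mimic the short exact sequences on $G/B$ from Section \ref{sec: Consequences for costandard}: one shifts $\lambda$ by $-kp\varpi$, where $\varpi$ is a fundamental weight with $\langle\varpi,\beta^\vee\rangle=1$ for some coroot $\beta^\vee$ witnessing the $p$-singularity of $\lambda$ (such a pair exists outside types $G_2$, $F_4$, $E_8$), and analyses $H^\bullet(\lambda-kp\varpi)$ via Jantzen's formulas combined with an iterative application of Lemma \ref{lemma translation invariance} to reduce to the fundamental polyhedron, at which point the preceding theorem and Lemma \ref{OTI 0 iff proj and SES} close the argument.
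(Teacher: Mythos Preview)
Your option 1 contains a concrete error. The claim that every $p$-singular indecomposable tilting $T_\lambda$ is a direct summand of $T_\mu\otimes T_{\nu_0}$ with $\mu\in\partial A_0\cap\mathfrak{X}_+$ and $\nu_0\in\overline{A_0}\cap\mathfrak{X}_+$ fails already for $SL_2$: here $\partial A_0\cap\mathfrak{X}_+=\{p-1\}$ and $\overline{A_0}\cap\mathfrak{X}_+=\{0,\dots,p-1\}$, so $T_{p-1}\otimes T_{\nu_0}$ has highest weight at most $2p-2$, and $T_{2p-1}$ cannot occur as a summand. The underlying issue is that not every $p$-singular block has its $\overline{A_0}$-representative on the \emph{affine} wall; for the block of $-\rho$ the representative sits on a finite wall and there is no $T_\mu$ to start from. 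You correctly flag this as ``the main obstacle'', but the iterated wall-crossing fix you mention does not resolve it either, since wall-crossing functors stay within a single block.

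Your option 2 is essentially what the paper does, but the paper's argument is cleaner than the sketch you give. The paper works directly with costandard modules via Lemma~\ref{lemma translation invariance} (the short exact sequences on $G/B$), not via tilting modules and Corollary~\ref{tilt 0 implies all 0}. The point is that the preceding theorem together with Corollary~\ref{cor vanishing geo} yields $\Phi_H(\nabla_\lambda)=0$ for every dominant $\lambda$ lying in $(W_{fin}\cdot\partial A_0)+p\mathfrak{X}_+$: the $W_{fin}$-translates of the finite walls of $A_0$ are interior walls of the fundamental polyhedron $\mathcal{P}_0$, where all $H^i$ vanish and Corollary~\ref{cor vanishing geo} applies, while the $W_{fin}$-translates of the affine wall are handled by the theorem plus Lemma~\ref{lemma translation invariance}. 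One then observes that these translates tile the $p$-singular locus inside a translate of the dominant cone, so a further application of Lemma~\ref{lemma translation invariance} finishes the argument. No appeal to Jantzen's formulas, minimal tilting complexes, or block-by-block tilting generation is needed.
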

	\begin{proof}
    Let $\lambda\in\mathfrak{X}_+$ be $p$-singular.  
    Since $\overline{A_0}$ is a fundamental domain for $W$, we have
\[
    W_{f}\cdot (\overline{A_0} \cap \mathfrak{X})+p\mathfrak{X}=\mathfrak{X}.\]  
    Hence there exists $\mu\in\mathfrak{X}$ such that $\lambda+p\mu\in(W_{f}\cdot\partial A_0)$.  
    Let $\mu'\in\mathfrak{X}_+$ such that $\mu+\mu'\in\mathfrak{X}_+$.  
    By Theorem \ref{thm vanishing along walls fund box}, and Lemma \ref{lemma translation invariance},  we have $\Phi_H(\nabla_{(\lambda+p\mu)+p\mu'})=0$. 
    On the other hand, again by Lemma \ref{lemma translation invariance}, $\Phi_H(\nabla_{\lambda})\cong\Phi_H(\nabla_{\lambda+p(\mu+\mu')})$ so we obtain the result.    
	\end{proof}

\section{Proof of Theorem B}
\label{Sec: Proof Thm B}

	We continue with our setup from \S\ref{sec: Alg grps}, \ref{sec prin SL2}.  
	We take $H=(U_K)_{1}\cong\alpha_p$, so that it is a regular subgroup of $G$.
	Our aim is to show that on the extended principal block, our functor takes values in super vector spaces within the larger category $\Ver_p$:
	\begin{equation}
	\label{eqn lands in svec}
    	\Phi_H(\Rep ^{\ext}_0(G))\sub \operatorname{sVec}.
	\end{equation}
	We will prove this result by first finding the image of $\Rep_0^{\ext}(G)$ in the stable category, since the OTI functor factors through the stable category.  
	In fact, we will show that our functor $\Phi_H$ factors through a semisimple subcategory of the stable category in this case, and we will use this to prove certain exactness properties of $\Phi_H$.
	
	\begin{remark}
		Recall from \S\ref{section oti blocks} that equation (\ref{eqn equiv rep cats}) implies that we may view $\Phi_H$ as a functor from $\Rep(G)$ to $\Ver_p\boxtimes\Ver_p(G)$.  
		Using Lemma \ref{lemma trans OTI}, the results of this section in fact show that:
		\begin{equation}
		\label{eqn lands in svec tensor verpg}
			\Phi_H(\Rep(G))\sub\operatorname{sVec}\boxtimes\Ver_p(G).    
		\end{equation}
		More explicitly, if $\lambda_0\in A_0\cap\mathfrak{X}$ and $M\in\Rep_{\lambda_0}(G)$, then $\Phi_H(M)$ has composition factors $\Phi_H(L_{\lambda_0})$ and $\Pi\Phi_H(L_{\lambda_0})$.   
	\end{remark}

\subsection{The Capricorn group}
\label{sec: Capricorn}

	\begin{definition}
  		Define the Capricorn group $C$ to be the algebraic group $\Gbb_m\ltimes\alpha_p$, where $t\cdot E= t^{-2}E$ for $t\in\Gbb_m(\Bbbk)$, and $E$ is as in \S\ref{sec: Reps Cp alphap}.
	\end{definition}
	The following is obvious. 
	\begin{lemma}
   		The data of a $C$-module $V$ is the same thing as a graded vector space $V=\bigoplus_{i \in \mathbb{Z}} V_i$ such that
   		\begin{enumerate}
       		\item $\Gbb_m$ acts with weight $i$ on $V_i$;
       		\item $E$ is an endomorphism of $V$ with $E^p=0$;
       		\item $E(V_i)\sub  V_{i-2}$.
   		\end{enumerate}
	\end{lemma}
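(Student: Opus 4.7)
The statement is essentially an unpacking of the definition of a semidirect product action, so the plan is to verify the three conditions step by step by translating the module axioms for $C = \Gbb_m \ltimes \alpha_p$ into the linear-algebraic data on $V$.

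First, I would observe that a $C$-module is the same data as a compatible pair of module structures: a $\Gbb_m$-module structure and an $\alpha_p$-module structure on $V$, subject to the compatibility coming from the semidirect product relation. The $\Gbb_m$-module structure is the same as a $\Zbb$-grading $V = \bigoplus_i V_i$ where $\Gbb_m$ acts by weight $i$ on $V_i$; this is the standard equivalence $\Rep(\Gbb_m) \simeq \operatorname{gr}\Vec_{\Bbbk}$. This gives condition (1). The $\alpha_p$-module structure is, as recalled in \S\ref{sec: Reps Cp alphap}, the same data as an endomorphism $E$ of $V$ with $E^p = 0$, giving condition (2).

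Next, I would translate the compatibility condition. The semidirect product relation $tEt^{-1} = t^2 E$ in $C$ (equivalently, the $\Gbb_m$-action on $\Dist(\alpha_p) = \Bbbk[E]/E^p$ with $E$ of weight $2$) forces the induced conjugation action of $t \in \Gbb_m$ on $\End(V)$ to send $E$ to $t^2 E$. Decomposing $E = \sum_{i,j} E_{ji}$ with $E_{ji} : V_i \to V_j$, conjugation by $t$ scales $E_{ji}$ by $t^{j-i}$. Comparing with $t^2 E$ gives $E_{ji} = 0$ unless $j - i = 2$, which is exactly condition (3).

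Finally, for the converse, given data $(V, \{V_i\}, E)$ satisfying (1)--(3), the grading defines a $\Gbb_m$-action, $E$ defines an $\alpha_p$-action via $\Dist(\alpha_p) = \Bbbk[E]/E^p$, and condition (3) ensures that the two actions satisfy the semidirect product compatibility, so they assemble into a $C$-action. The main (minor) subtlety is simply to be careful that the semidirect product convention matches the one chosen in the definition, i.e.\ that $\Gbb_m$ acts on $\alpha_p$ with weight $2$ (not $-2$); no real obstacle arises since the definition has already fixed this convention.
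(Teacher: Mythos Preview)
Your proposal is correct; it is precisely the standard unpacking of a module over a semidirect product $\Gbb_m \ltimes \alpha_p$ into a grading, a nilpotent endomorphism, and the weight-$2$ compatibility. The paper gives no proof at all (it simply declares the lemma ``obvious''), so your argument is the natural elaboration of what the authors left implicit.
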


	Observe that given a choice of torus and Borel subgroup $T\sub B\sub SL_2$, we have that $C\cong B_1T$. 
	In particular, if we write $C_K:=(U_K)_1T_K$, then we have a morphism $C\to C_K$, which is either an isomorphism or a two-fold cover.  
	We call $C_K$ a principal Capricorn subgroup of $G$.

\subsubsection{Stable module category of $C$}
\label{Sec: Stable module cat}

	We define $M(a, d)$ to be the indecomposable $C$-module of dimension $d+1$, with lowest weight $a\in\Zbb$, which we depict below:

	\begin{center}
    \begin{tikzcd}
  		|[label=\textcolor{blue}{a}]|{\Bbbk} 
  		& |[label = \textcolor{blue}{a + 2}]|{\Bbbk} \arrow[l] 
  		& \cdots \arrow[l]
  		& |[label=\textcolor{blue}{a+2d}]|{\Bbbk} \arrow[l] 
		\\
	\end{tikzcd}
	\end{center}
	The $E$-action is given by the arrows, while the weights are given above in blue.
	We note that every indecomposable $C$-module is isomorphic to $M(a,d)$ for some $a\in\Zbb$, $0\leq d\leq p-1$.

	Consider the natural functor $(-)^{st}:\Rep (C)\to\Rep (C)^{st}$ where $\Rep (C)^{st}$ is the stable module category of $\Rep (C)$, see  \S\ref{sec: stable module category and semisimplification}. 
 	We recall that $\Rep (C)^{st}$ is tensor triangulated, with a symmetric monoidal structure inherited from $\Rep(C)$.  
    Further, $M^{st}=0$ iff $M$ is projective, and $(-)^{st}$ takes short exact sequences to exact triangles.
 	We will suppress the superscript when it is clear from context, writing $M$ in place of $M^{st}$.

	Note that a module is projective over $C$ if and only if it is projective over $\alpha_p$. 
	The indecomposable module $M(a,d)$ is projective over $\alpha_p$ if and only if $d=p-1$.  
	Now, consider the short exact sequence
	$$ 
		M(a,d) \hookrightarrow M(a, p-1) \twoheadrightarrow M(a + 2(d+1), p - d-2).
	$$

	Passing to $\Rep (C)^{st}$ we have $M(a, p - 1) \cong 0$, so we obtain the distinguished triangle
	$$
		M(a,d) \rightarrow 0 \rightarrow M(a + 2(d+1), p - d-2)\rightarrow M(a,d)[1]
	$$
	where $[1]$ denotes the shift in $\Rep (C)^{st}$. 
	Therefore we have 
	\begin{equation}  
	\label{shift}   
		M(a,d)[1] \cong M(a + 2(d+1), p -d-2).
	\end{equation}
	This tells us how to calculate shifts in the stable category.
	\begin{lemma}
	\label{lemma shifts of O}
    	For $a, m\in\Zbb$, we have the following isomorphisms in $\Rep (C)^{st}$:
    	\[
    		M(a,0)[m]\cong \begin{cases}
        	M(a + mp - p+2, p - 2) & \text{ if } m \text{ is odd,}\\
        	M(a + mp, 0) & \text{ if $m$ is even}.
    	\end{cases}
    	\] 
	\end{lemma}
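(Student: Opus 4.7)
The plan is to argue by direct induction on $|m|$ using the fundamental shift formula \eqref{shift}, namely $M(\lambda,i)[1]\cong M(\lambda-2(i+1),p-i-2)$. The key observation driving the argument is that the two cases of the claimed formula have complementary shapes in the second parameter: for even $m$ it is $i=0$, while for odd $m$ it is $i=p-2$. Since \eqref{shift} sends second parameter $i$ to $p-i-2$, a single application of the shift interchanges these two shapes, so the only thing to track in detail is the evolution of the highest-weight parameter $\lambda$.

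First I verify the small cases. Plugging $\lambda=0$, $i=0$ into \eqref{shift} gives $M(0,0)[1]\cong M(-2,p-2)$, matching the odd formula at $m=1$. Applying \eqref{shift} once more with $\lambda=-2$, $i=p-2$ yields
\[
M(0,0)[2]\cong M\bigl(-2-2(p-1),\,0\bigr)=M(-2p,0),
\]
matching the even formula at $m=2$. For the inductive step in the positive direction, assume $M(0,0)[2k]\cong M(-2kp,0)$. One application of \eqref{shift} produces $M(0,0)[2k+1]\cong M(-2kp-2,p-2)$, and a second application produces
\[
M(0,0)[2k+2]\cong M\bigl(-2kp-2-2(p-1),\,0\bigr)=M(-2(k+1)p,\,0),
\]
closing the induction for all $m\geq 0$.

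For negative $m$, I invert \eqref{shift}: solving $\mu-2(j+1)=\lambda$ and $p-j-2=i$ gives
\[
M(\lambda,i)[-1]\cong M\bigl(\lambda+2(p-i-1),\,p-i-2\bigr).
\]
Running the same alternating two-step cycle backwards from $M(0,0)$ handles all $m<0$. There is no genuine obstacle here; the argument is purely a bookkeeping exercise, and the only care required is to keep the arithmetic $-2kp-2-2(p-1)=-2(k+1)p$ straight through the even/odd alternation.
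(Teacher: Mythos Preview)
Your proof is correct and is exactly the straightforward induction using equation \eqref{shift} that the paper invokes. Your write-up simply spells out the arithmetic that the paper leaves implicit.
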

	\begin{proof}
    	This follows from a straightforward induction argument using equation (\ref{shift}).
	\end{proof}

	The following computation, while simple, is critical to the main argument of this section.
	\begin{lemma}
	\label{lemma homs st cat}  
		Let $a, m,m'\in\Zbb$.  
		We have
    	$$ 
    	\operatorname{Hom}_{\Rep (C)^{st}}(M(a, 0)[m], \; M(a, 0)[m']) 
    	= 
    	\begin{cases}
        	\Bbbk & \text{if } m = m';
        	\\
        	0 & \text{if } m \neq m'.
    	\end{cases}
    	$$
	\end{lemma}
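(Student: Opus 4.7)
First, since $[1]$ is an auto-equivalence of $\Rep(C)^{st}$, we have $\operatorname{Hom}(X[m], Y[m']) \cong \operatorname{Hom}(X, Y[m' - m])$ for any $X, Y$ and any $m, m' \in \Zbb$. Applied to our setting, the lemma reduces to showing
\[
\operatorname{Hom}_{\Rep(C)^{st}}(M(0,0), M(0,0)[n]) \cong \begin{cases} \Bbbk & \text{if } n = 0, \\ 0 & \text{if } n \neq 0, \end{cases}
\]
where $n := m' - m$. By Lemma \ref{lemma shifts of O}, the target $M(0,0)[n]$ is either $M(-np, 0)$ (for $n$ even) or $M(-(n-1)p - 2, p - 2)$ (for $n$ odd).

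For $n \neq 0$ the plan is to show that already the ordinary Hom in $\Rep(C)$ vanishes. Any morphism $f : M(0,0) \to N$ sends the generator to a weight-$0$ vector of $\ker(E) \subseteq N$. In the even case the target $M(-np, 0)$ is one-dimensional of weight $-np \neq 0$, so no such vector exists. In the odd case, the weights of $M(-(n-1)p - 2, p - 2)$ reduced modulo $p$ form the set $\{-2(k + 1) \bmod p : 0 \leq k \leq p - 2\}$. Since $p$ is odd, multiplication by $-2$ is a bijection on $\Zbb/p\Zbb$, so this set equals $(\Zbb/p\Zbb) \setminus \{0\}$. Thus no weight-$0$ vector exists, and both the ordinary and the stable Hom vanish.

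For $n = 0$, simplicity of $M(0,0)$ gives $\operatorname{End}_{\Rep(C)}(M(0,0)) = \Bbbk$, so it suffices to check that the identity does not factor through a projective. The indecomposable projectives in $\Rep(C)$ are precisely the modules $M(\lambda, p - 1)$ for $\lambda \in \Zbb$, each of dimension $p$. If the identity on $M(0,0)$ factored through a projective, then $M(0,0)$ would be a direct summand of that projective and hence itself projective; by Krull--Schmidt it would then be isomorphic to some $M(\lambda, p - 1)$, which is impossible since $\dim M(0,0) = 1 \neq p$. Therefore $\operatorname{End}_{\Rep(C)^{st}}(M(0,0)) = \Bbbk$. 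The main subtle point is the modular-weight analysis in the odd case: a priori $M(0,0)[n]$ is $(p - 1)$-dimensional and could plausibly have contained a weight-$0$ vector, but the oddness of $p$ together with the step-$2$ progression of its weights rules this out.
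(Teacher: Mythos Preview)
Your proof is correct and follows essentially the same strategy as the paper: both arguments show that for $m\neq m'$ the ordinary $\Rep(C)$-Hom already vanishes by a weight (support) consideration, and then handle $m=m'$ by noting $\End_{\Rep(C)}(M(\lambda,i))=\Bbbk$. The only differences are organizational: you first reduce to the source $M(0,0)$ via the shift auto-equivalence and then use a clean mod-$p$ argument in the odd case, whereas the paper computes $\operatorname{supp}M(0,0)[m]$ explicitly for all $m$ and checks pairwise disjointness; and you spell out why the identity on $M(0,0)$ cannot factor through a projective, a step the paper leaves implicit.
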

	\begin{proof} 
        By Lemma \ref{lemma shifts of O}, for any integer $m$, we
  		have isomorphisms
  		\begin{gather}
			M(a,0)[2m] \cong M(a + 2mp,0) \label{eq:sh1}\\
			M(a,0)[2m+1]  \cong M(a + 2mp + 2, p-2). \label{eq:sh2}
		\end{gather}
		For a $C$-module $V$, denote by $\operatorname{supp}(V)\sub\Zbb$ the set $\{n\in\Zbb:V_n\neq 0\}$. 
		By Lemma \ref{lemma shifts of O}, we compute that
    	\[   
			\operatorname{supp}M(a + 2mp,0)=\{a + 2mp\},
    	\]
  		and
    	\[    
			\operatorname{supp} M(a + 2mp + 2, p-2)
			=
			\{a +2mp+2,a +2mp+4,\dots,a+2(m+1)p-2\}.
    	\]
   		From this we conclude that the supports of any distinct modules of the form $M(a+2mp,0)$ or $M(a+2mp + 2, p-2)$ are disjoint. 
    	Combining this with \eqref{eq:sh1} and \eqref{eq:sh2} proves that
		\[
     		\Hom_{\Rep(C)^{st}}(M(a, 0)[m], \; M(a,0)[m'])
     		=
     		0 
     		\quad 
     		\text{if $m\neq m'$.}
		\]
     	It remains to observe that $\End_{\Rep (C)}(M(a,d))\cong\Bbbk$
        if $0 \le d < p-1$, and we are done.
	\end{proof}

	\begin{example}
		If we consider the costandard modules $\nabla_n$ in the extended principal block of $SL_2$, restrict them to the principal Capricorn subgroup $C \subset B$, and consider  their images in $\Rep (C)^{st}$ we have isomorphisms:
  		\begin{gather*}
    		\nabla_{mp} 
    		\cong 
    		M(mp,0), 
    		\\
    		\nabla_{mp-2} 
    		\cong 
    		M((m-2)p+2,p-2).
  		\end{gather*}
  		(This can be checked by hand, or the reader can wait for Lemma \ref{lemma prep}.) 
  		One observes that every weight space is present in at most one of these modules. 
  		See Figure \ref{fig:supports} for the case $p=3$.
	\end{example}

	\begin{figure}[htbp]
  	\centering
  	\begin{tikzpicture}
	\matrix (m) [matrix of nodes,
  		nodes in empty cells,
  		column 1/.style={nodes={text width=6mm, align=center}},
  		nodes={draw, minimum width=6mm, minimum height=6mm, anchor=center},
  		row sep=-\pgflinewidth,
  		column sep=-\pgflinewidth,
		]{
 		& -1 & 0 & 1 & 2 & 3 & 4 & 5 & 6 & 7 & 8 & 9 & 10 & 11 & 12 & 13 & 14 & 15 & 16 \\
		$\nabla_0$    &  & $\bullet$ &  &  &  &  &  &  &  &  &  &  &  &  &  &  &  &    \\
		$\nabla_1$    & $\bullet$ &  & $\bullet$ &  &  &  &  &  &  &  &  &  &  &  &  &  &  &    \\
		$\nabla_3$    &  &  &  &  & $\bullet$ &  &  &  &  &  &  &  &  &  &  &  &  &  \\
		$\nabla_4$    &  &  &  & $\bullet$ &  & $\bullet$ &  &  &  &  &  &  &  &  &  &  &  &   \\
		$\nabla_6$    &  &  &  &  &  &  &  & $\bullet$ &  &  &  &  &  &  &  &  &  &  \\
		$\nabla_7$    &  &  &  &  &  &  & $\bullet$ &  & $\bullet$ &  &  &  &  &  &  &  &  &  \\
		$\nabla_9$    &  &  &  &  &  &  &  &  &  &  & $\bullet$ &  &  &  &  &  &  &  \\
		$\nabla_{10}$ &  &  &  &  &  &  &  &  &  & $\bullet$ &  & $\bullet$ &  &  &  &  &  &  \\
		$\nabla_{12}$ &  &  &  &  &  &  &  &  &  &  &  &  &  & $\bullet$ &  &  &  &  \\
		$\nabla_{13}$ &  &  &  &  &  &  &  &  &  &  &  &  &   $\bullet$ &  &  $\bullet$ &  &  & \\
		$\nabla_{15}$ &  &  &  &  &  &  &  &  &  &  &  &  &  &  &  &  & $\bullet$ &   \\
		$\nabla_{16}$ &  &  &  &  &  &  &  &  &  &  &  &  &  &  &  & $\bullet$ &  & $\bullet$  \\
		};
	\end{tikzpicture}
    \caption{Supports of the OTI functor applied to costandard modules
      in the extended principal block for $SL_2$ and $p=3$.}
     \label{fig:supports}
   \end{figure}

	\subsubsection{A semisimple subcategory of $\Rep(C)^{st}$} In the following, by a triangulated full subcategory $\mathcal{T}$ of $\Rep (C)^{st}$, we mean a full additive subcategory that is closed under isomorphisms and shifts, and has the property that if
	\[
		X\to Y\to Z\to X[1]
	\]
	is a distinguished triangle in $\Rep (C)^{st}$ and $X,Y$ lie in $\mathcal{T}$, then so does $Z$ (see \cite[\S1.5]{Neeman}).
    
 	Inside $\Rep (C)^{st}$ we consider the triangulated full category generated by $M(0,0)$ and $M(2-p,p-2)$:
 	\[
		\Rep (C)^{st}_p 
		:= 
		\langle M(0,0), M(2-p,p-2) \rangle 
		= 
		\langle M(0,0), M(-p,0) \rangle.
	\]
	(The second equality follows because $M(-p,0)[1] = M(2-p,p-2)$
        by \eqref{shift}.) Note that $\Rep(C)^{st}_p$ is a symmetric
        monoidal subcategory of $\Rep(C)^{st}$ because
        \[
          M(0,0)\otimes M(-p,0) \cong M(-p,0), \quad
          M(-p,0) \otimes M(-p,0) \cong  M(-2p,0) \cong  M(0,0)[-2],
        \]
        where we have used Lemma \ref{lemma shifts of O} and that the shift functor respects $\otimes$.

	\begin{prop} 
	\label{prop:equiv to svec}
  		The category $\Rep (C)^{st}_p$ is equivalent (as an additive category) to graded super vector spaces. More precisely, there exists a unique equivalence
  		\[
			\mathcal{E} : \Rep (C)^{st}_p  \to \operatorname{gr}_{\Zbb}\sVec
		\]
		satisfying the following two conditions:
		\begin{enumerate}
			\item We have  $M(0,0) \mapsto \Bbbk^{1|0}$ and $M(2-p,p-2) \mapsto \Pi \Bbbk^{1|0} = \Bbbk^{0|1}$, where $\Bbbk^{1|0}$ (resp. $\Bbbk^{0|1}$) denotes the purely even (resp.~odd) super vector space of dimension 1 in degree zero.
			\item For all $M \in \Rep (C)^{st}_p$ we have a natural isomorphism
  				\[
					\mathcal{E}(M[m]) 
					\cong 
					\Pi^m \mathcal{E}(M)[m]
    			\]
				where $[m]$ denotes the standard grading shift functor on graded modules (i.e. $M[m]_{j} = M_{m+j}$). 
  		\end{enumerate}
	\end{prop} 

	\begin{proof} 
		In this proof we will find it easier to work with the description
		\[
			\Rep (C)^{st}_p = \langle M(0,0), M(-p,0) \rangle.
		\]
		Write $\mathcal{C}$ for the full additive subcategory of $\Rep (C)^{st}$ generated by the objects $M(0,0)[m], M(-p,0)[n]$ for $m,n\in\Zbb$, and which is closed under isomorphisms.  
		We would like to show that $\Cc$ is a triangulated full subcategory of $\Rep (C)^{st}$, which would imply that $\mathcal{C}=\Rep(C)^{st}_p$.

		By construction, $\mathcal{C}$ is additive, closed under isomorphisms and closed under shifts.  
		Thus it suffices to show if $X,Y$ lie in $\Cc$, and we have a triangle
		\[
			X\to Y\to Z\to X[1],
		\]
		then $Z$ also lies in $\Cc$. 
		However, we know by Lemma \ref{lemma homs st cat} that
                maps between any two shifts of
                $M(0,0)$ (resp. $M(-p,0)$) are either
                isomorphisms or 0. On the other hand, there are no
                non-zero maps in either direction between any shift of
                $M(0,0)$ and any shift of $M(-p,0)$, because the
                weights are of different parity (see Lemma \ref{lemma shifts of O}).
		Thus it is easy to check that $Z$ also lies in $\Cc$.
  
		It follows that $\Rep (C)^{st}_p$ is semisimple and
                we can construct a functor out of it by specifying the
                images of simple objects, which are uniquely
                determined by conditions (1) and (2).
	\end{proof}

	\begin{remark} 
	\label{rem:stable equivalence}
  		Proposition \ref{prop:equiv to svec} implies that $\Rep (C)^{st}_p$ is a semisimple, symmetric tensor category.  
    \end{remark}
    
    \begin{remark}
     In fact, $\mathcal{E}$ is an equivalence of symmetric tensor categories, however we will not need this
                fact. Because $\mathcal{E}$ is an equivalence, we only
                need to check it is a symmetric monoidal functor.  It is straightforward to give a monoidal structure to $\mathcal{E}$ (see \cite[Definition 2.4.1]{EGNO}), and the fact that $\mathcal{E}$ is symmetric is left as an exercise for the
                reader.  Alternatively, one can show that $\Rep(C)^{st}_p$ is equivalent to $\operatorname{gr}_{\Zbb}\sVec$ as a symmetric tensor category via an argument using super Tannakian formalism.
	\end{remark}

	\begin{remark} 
		Below it will be important to consider the functor
  		\[
			\Rep (C)^{st}_p \to \operatorname{gr}_{\Zbb}\Vec
		\]
		obtained by composing $\mathcal{E}$ above with the functor that forgets the $\mathbb{Z}/2\mathbb{Z}$ grading on the super vector space. 
		One calculates easily that this functor maps
		\[
			M(mp,0) 
			\mapsto 
			\Bbbk[m] 
			\quad 
			\text{and} 
			\quad 
			M(mp+2,p-2) 
			\mapsto 
			\Bbbk[m+1].
  		\]
		By drawing pictures similar to those in Figure 1, one sees that this functor ``divides the centre of mass by $p$, and negates the result''.
	\end{remark}

\subsubsection{Restriction to the principal Capricorn subgroup}
\label{sec: Principal capricorn}

	Recall that we have the quotient morphism $C\to C_K$, where $C_K$ is a principal Capricorn subgroup of $G$ (see \S\ref{sec: Capricorn}), meaning we have a full tensor subcategory $\Rep(C_K)\sub\Rep(C)$. 
	The kernel of $C\to C_K$ is either trivial or $\mu_2$ (the subgroup scheme of $\Gbb_m$ of elements of order 2), which both have semisimple representation theory.  
	Thus a $C_K$-module is projective if and only if it is projective as a $C$-module.  
	In particular, we have a natural embedding of tensor triangulated categories $\Rep(C_K)^{st}\sub\Rep(C)^{st}$. 

	In what follows, we will write 
	\begin{equation}
	\label{eqn Phi st}
		\Phi_H^{st}:\Rep (G)\to\Rep(C_K)^{st}\sub \Rep (C)^{st}
	\end{equation}
	for the functor given by $\Phi_H^{st}(M)=(\operatorname{Res}_{C_K}^{G}M)^{st}$.  
	We claim that we have the following commutative diagram:
	\[
	\xymatrix{
		\Rep (G) \ar@{^{(}->}[r] \ar@/^2pc/[rrr]^{\Phi_H^{st}}\ar[d]_{\Phi_H} & D^b(\Rep(G)) \ar[r]& D^b(\Rep(C)) \ar[r] & \Rep (C)^{st} \ar[d]^{ss} \,
		\\ 
		\operatorname{gr}_{\Zbb}\Rep _{\operatorname{Ver}_p}(C_G(H))\ar[rrr] & && \operatorname{gr}_{\Zbb}\Ver_p.
	}
	\]
	The functor $D^b(\Rep(C))\to\Rep(C)^{st}$ is the quotient functor coming from the description of the stable category as a quotient of the derived category by perfect complexes \cite[Theorem 2.1]{RickardStable}.\footnote{Rickard proves this result for module categories over self-injective algebras, but it is easy to see that his proofs carry over to $\Rep (C)$.} 
	The right vertical arrow is semisimplification (because $\operatorname{gr}_{\Zbb}(\Ver_p)$ is the semisimplification of $\Rep(C)$), and the bottom horizontal arrow is forgetting the $C_G(H)$-action.

	\begin{prop}
	\label{prop stable cat im}
    	For a block $\Rep_{\lambda}(G)$ of $\Rep^{\ext}_0(G)$, there exists $a\in\{ -p, 0 \}$ such that 
    	\[
    	\Phi_{H}^{st}(\Rep_\lambda(G))\sub\langle M(a,0)\rangle.
  		\]
		In particular, the image of $\Rep ^{\ext}_0(G)$ under $\Phi_H^{st}$ in $\Rep (C)^{st}$ lies in $\Rep (C)^{st}_p$.
	\end{prop}

	We start with a lemma. 

	\begin{lemma}
	\label{lemma prep}
    	\begin{enumerate}
    		\item For $\lambda_0\in (W^{\ext}\cdot 0)\cap A_0$ and $x \in {}^f W$ we have
      			\[
        			\Phi_H^{st}(\nabla_{x \cdot \lambda_0}) 
        			\cong 
        			\Phi_H^{st}(\nabla_{\lambda_0})[\ell(x)].
        		\]
        	\item For $\lambda,\mu\in\mathfrak{X}_+$, we have 
        		\[
        			\Phi_H^{st}(\nabla_{\lambda+p\mu})
        			\cong
        			\Phi_H^{st}(\nabla_{\lambda})\otimes M(2p\mu(\rho^\vee),0).
        		\]
      		\item For $\lambda_0= y \cdot 0 \in (W^{\ext}\cdot 0)\cap A_0$, we have 
        		\[ 
					\Phi_H^{st}(\nabla_{\lambda_0})\cong 
					\begin{cases} 
						M(0,0) & \text{if $\epsilon(y) = 1$}, 
						\\
      					M(2-p,p-2) & \text{if $\epsilon(y) = -1$,} 
      				\end{cases} 
				\]
			(recall the sign character $\epsilon$ on $W^{\ext}$ was defined in \S\ref{sec:Weyl groups and alcoves}).
    	\end{enumerate}  
	\end{lemma}
	\begin{proof}
		For statement (1), it is enough to prove that if $xs > x$ for $s\in S$, then we have
    	\[
			\Phi_H^{st}(\nabla_{xs \cdot \lambda_0}) 
			\cong 
			\Phi_H^{st}(\nabla_{x \cdot \lambda_0})[1].
		\]
		This isomorphism was essentially already stated in the proof of Corollary \ref{cor:shifting}, and follows by considering the exact triangle obtained from the wall-crossing exact sequence (\ref{eqn wall crossing SES}).
		The claim in (2) is the stable category version of Lemma \ref{lemma translation invariance}, where the isomorphism is induced by the map on global sections of the short exact sequence (\ref{eqn ses line bundles}).

		For the last statement, we can find $\mu$ dominant such that $y = t_{-\mu} x$ and $x\in W$.
		Hence
		\begin{equation} 
		\label{eq:lambda0id}
  			\lambda_0 + p\mu 
  			= 
  			x \cdot 0 
  			\quad 
  			\text{with $x \in W$},
		\end{equation}
		and, as explained in \S\ref{sec:Weyl groups and alcoves},
		\begin{equation}
		\label{eq:signs agree}
			\epsilon(y) 
			= 
			\epsilon(x). 
		\end{equation}
		Now, by part (2), we have
		\[
		\Phi_H^{st}(\nabla_{\lambda_0}) 
		\cong 
		\Phi_H^{st}(\nabla_{x \cdot 0}) \otimes M(-2p\mu(\rho^\vee),0).
		\]
		By a classic formula for the length function \cite[Proposition I.23]{IM}, we have
		\[
			\ell(x) 
			= 
			\ell(t_\mu) 
			= 
			2\mu(\rho^\vee).
		\]
		If $\ell(x)$ is even then, by Lemma \ref{lemma shifts of O},
		\[
			\Phi_H^{st}(\nabla_{\lambda_0}) 
			\cong 
			\Phi_H^{st}(\nabla_{x \cdot 0}) \otimes M(-2p\mu(\rho^\vee),0) 
			\cong 
			M(0,0)[\ell(x)] \otimes M(0,0)[-\ell(x)]
			\cong 
			M(0,0).
		\]
		If $\ell(x)$ is odd then (again using Lemma \ref{lemma shifts of O})
		\[
  			\Phi_H^{st}(\nabla_{\lambda_0}) 
  			\cong 
  			\Phi_H^{st}(\nabla_{x \cdot 0}) \otimes M(-2p\mu(\rho^\vee),0) 
  			\cong 
  			M(0,0)[1] \otimes M(-p,0) \cong M(2-p,p-2).
		\]
		Statement (3) now follows from \eqref{eq:signs agree}.
	\end{proof}

	\begin{proof}[Proof of Proposition \ref{prop stable cat im}] 
		The category $\Rep ^{\ext}_0(G)$ is a union of blocks $\Rep_{\lambda_0}(G)$, for \newline $\lambda_0\in (W^{\ext}\cdot 0)\cap A_0$, see \S\ref{sec:Weyl groups and alcoves}.  
		By (3) of Lemma \ref{lemma prep} we may write
		\[
			\Phi_H^{st}(\nabla_{\lambda_0})\cong M(a,0)[m],
		\]
		for some $a \in \{ -p, 0 \}$ and $m\in\Zbb$. 
		(Recall that $M(2-p,p-2) \cong M(-p,0)[1]$ by \eqref{shift}.)
		By (1) of Lemma \ref{lemma prep} it follows that for any dominant $\mu\in W\cdot \lambda_0$,
		\[
			\Phi_H^{st}(\nabla_{\mu})\in\langle M(a,0)\rangle. 
		\]
		The category $D^b(\Rep_{\lambda_0}(G))$ is generated by the costandard modules $\nabla_{\mu}$ for dominant $\mu\in W\cdot\lambda_0$.
  		Since the composition $D^b(\Rep_\lambda(G))\to D^b(\Rep(C))\to\Rep(C)^{st}$ is triangulated, we obtain that
		\[
  			\Phi_H^{st}(\Rep_{\lambda_0}(G))\sub\langle M(a,0)\rangle. \qedhere
  		\]
	\end{proof}

	This allows us to establish Theorem B. 
	\begin{thm}
	\label{thm land in sVec} 
		$\Phi_H(\Rep _{0}^{\ext}(G))\sub\operatorname{sVec}$.
	\end{thm}

	\begin{proof} 
		We can factor $\Phi_H$ as the composition
  		\[
			\Rep _{0}^{\ext}(G) 
			\stackrel{\Phi_H^{st}}{\longrightarrow} 
			\Rep (C)^{st}  
			\stackrel{\Res}{\longrightarrow} 
			\Rep(H)^{st} 
			\stackrel{ss}{\longrightarrow} 
			\Ver_p
		\]
		where the second arrow denotes the functor induced by restriction to $H = \alpha_p$ and the third arrow denotes semisimplification. 
		We know from Proposition \ref{prop stable cat im} that $\Phi_H^{st}$ lands in $\Rep (C)_p^{st}$, whose indecomposable objects are all of the form $M(a,d)$ where $d \in \{ 0, p-2 \}$. 
		These objects map to $L_0$ and $L_{p-2}$ in $\Ver_p$, which completes the proof.
	\end{proof}

    \begin{remark}
        Using Lemma \ref{lemma prep}, one may obtain formulae for $\Phi_H(\nabla_{x\cdot 0})$ for $x\in W^{\ext}$.  
        Moreover, using this along with Lemma \ref{lemma trans OTI}, one may obtain formulae for $\Phi_H(\nabla_{x\cdot\lambda_0})$ for any $\lambda_0\in A_0\cap\mathfrak{X}$ and $x\in W^{\ext}$.   
    \end{remark}

\section{Further results}
\label{Sec: Further}

	In this final section, we assume throughout that $H=\alpha_p$.

\subsection{OTI functor via complexes of tilting modules}
\label{sec: Complexes of tilting modules}

	In this section we reinterpret our functor via complexes of tilting modules. 
	This interpretation gives an alternative proof of most of Theorems A and B. 
	We will also use this in the next section to connect $\Phi_H$ with Gruber's theory of singular modules for $G$.

\subsubsection{Minimal complexes}
\label{Sec: minimal complexes}

	Recall that for any Krull-Schmidt additive category $\mathcal{A}$ one may speak of minimal complexes. 
	More precisely, any complex $M=M^{\bullet} \in K^b(\mathcal{A})$ admits a summand 
	\[	
		M^{\min} \subset M
	\] 
	which is isomorphic to $M$ in $K^b(\mathcal{A})$ and may be obtained from $M$ by repeatedly deleting contractible summands, until this is no longer possible. 
	In particular, if we decompose each term into indecomposable summands
	\[
		M^i = \bigoplus T_j^{\oplus m^i_j},
	\]
	then we may view the differential on our complex as a matrix of morphisms between indecomposable modules. 
	Our complex is minimal if and only if no entries of these matrices are isomorphisms. 
	(For more on minimal complexes, see e.g. \cite[\S 2.1]{GruberMinimal}, \cite[\S 6.1]{EW} and \cite{Krause}.)

\subsubsection{Negligible tilting modules}
	The category $\Tilt$ is a Karoubian, rigid, symmetric monoidal category, and thus we may speak of its ideal $\Tilt_{neg}$ of negligible morphisms.  
	As usual, we say that $T\in\Tilt$ is negligible if $\operatorname{id}_{T}$ lies in $\Tilt_{neg}$.  
	Recall that $T$ is negligible if and only if its indecomposable summands are negligible, and an indecomposable tilting module is negligible if and only if its dimension is divisible by $p$ {(see \S\ref{sec: stable module category and semisimplification})}.

	\begin{lemma}
	\label{lemma negl tilts}
		For $\lambda\in\mathfrak{X}_+$, the following are equivalent:
		\begin{enumerate}
    	\item $T_{\lambda}$ is negligible,
    	\item $T_{\lambda}|_{C}$ is projective, 
    	\item $\Phi_H(T_{\lambda})=0$, and
    	\item $\lambda\notin A_0$.
		\end{enumerate}
	\end{lemma}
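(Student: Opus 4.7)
The plan is to close a short cycle of implications, exploiting that (3) $\iff$ (4) has already been proved as Corollary \ref{cor oti tilts}. It therefore remains only to tie (1) and (2) to this pair, which I would do via (2) $\iff$ (3), (3) $\Rightarrow$ (1), and (1) $\Rightarrow$ (4).

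For (2) $\iff$ (3), I would use the structure $C \cong \Gbb_m \ltimes \alpha_p$ together with the fact that the kernel of $C \to C_K$ is the linearly reductive group scheme $\mu_2$. Since $\Gbb_m$ and $\mu_2$ are both linearly reductive, projectivity of a $C$-module (or $C_K$-module) is equivalent to projectivity of its restriction to $\alpha_p$. Combined with Lemma \ref{OTI 0 iff proj and SES} this gives $T_\lambda|_{C}$ projective $\iff$ $T_\lambda|_{\alpha_p}$ projective $\iff$ $\Phi_H(T_\lambda) = 0$.

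For (3) $\Rightarrow$ (1), I would observe that if $T_\lambda|_{\alpha_p}$ is projective then it is free, since $\alpha_p$ is local with unique indecomposable projective of dimension $p$. Hence $p \mid \dim T_\lambda$, and as $T_\lambda$ is indecomposable this means $T_\lambda$ is negligible.

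For (1) $\Rightarrow$ (4) I would argue by contrapositive. If $\lambda \in A_0 \cap \mathfrak{X}_+$ then $\lambda$ is minimal in its linkage class among dominant weights and so $T_\lambda = \nabla_\lambda = L_\lambda$. The Weyl dimension formula expresses $\dim T_\lambda$ as
\[
\prod_{\alpha \in R_+} \frac{\langle \lambda + \rho, \alpha^\vee \rangle}{\langle \rho, \alpha^\vee \rangle}.
\]
By the defining inequalities of $A_0$, every numerator satisfies $0 < \langle \lambda + \rho, \alpha^\vee \rangle < p$, and the standing assumption $p \geq h$ forces every denominator $\langle \rho, \alpha^\vee \rangle$ to be strictly less than $p$ as well. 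Hence $\dim T_\lambda$ is coprime to $p$ and $T_\lambda$ is not negligible. I do not anticipate any real obstacle here: the entire argument reduces to results already established (Corollary \ref{cor oti tilts}, Lemma \ref{OTI 0 iff proj and SES}, the $\alpha_p$-module theory of Section \ref{sec: Reps Cp alphap}) together with a routine application of the Weyl dimension formula in the fundamental alcove.
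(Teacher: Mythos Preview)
Your proposal is correct and follows essentially the same route as the paper: both rely on Corollary \ref{cor oti tilts} for (3)$\iff$(4), on Lemma \ref{OTI 0 iff proj and SES} (together with the fact that projectivity over $C$ coincides with projectivity over $\alpha_p$) for (2)$\iff$(3), and on the Weyl dimension formula in the fundamental alcove for (1)$\Rightarrow$(4). The only cosmetic difference is that the paper records (2)$\Rightarrow$(1) as ``clear'' whereas you spell out the dimension-divisibility argument via (3)$\Rightarrow$(1); these are the same observation.
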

	\begin{proof}
		(2)$\Rightarrow$(1) is clear, (2)$\iff$(3) follows from Lemma \ref{OTI 0 iff proj and SES}, and $(3)\iff(4)$ is exactly Corollary \ref{cor oti tilts}.  
		Finally for (1)$\Rightarrow$(4), {if $\lambda\in A_0$ then we have $T_{\lambda}=\nabla_{\lambda}$, so we may apply the Weyl dimension formula}.
	\end{proof}

   The equivalence (1)$\iff$(4) was originally studied (in a different context) in \cite{GM94,AP95}.

\subsubsection{Complexes of tilting modules}
\label{Sec: complexes of tilting modules}

	The inclusion $\Tilt\sub\Rep(G)$ induces an equivalence of triangulated categories
	\[
		K^b(\Tilt) \stackrel{\sim}{\to} D^b(\Rep(G))
	\]
	between the homotopy category of tilting modules and the derived category (see e.g.~\cite[Proposition 7.17]{riche2016geometric}).  
	Consider the Verdier quotient
	\[
		\varphi: K^b(\Tilt) \to K^b(\Tilt) / K^b(\Tilt_{neg}).
	\]
	By Lemma \ref{lemma negl tilts}, this quotient is generated (as a triangulated category) by the images of $T_{\lambda}$ for $\lambda\in \mathfrak{X}_+\cap A_0$:
	\begin{equation} 
	\label{eq:quot}
		K^b(\Tilt) / K^b(\Tilt_{neg}) 
		= 
		\langle T_\lambda \; | \; \lambda \in \mathfrak{X}_{+} \cap A_0 \rangle.
	\end{equation}

\subsubsection{Complexes of tilting modules and $\Phi_H$}
\label{sec:min and Phi}

	We now use these considerations to show that the OTI functor $\Phi_H$ factors over the quotient functor considered in the previous section. 
	This eventually leads to a transparent description of the OTI functor in terms of minimal complexes.

	We claim that we have the following diagram of categories and functors
	\[
	\begin{tikzcd}
    	& & \Rep (G) \ar[dl] \ar[d] \ar[dddr,bend left,"{\Phi_H}"',swap]
    	\\
    	K^b(\Tilt) \ar[r] \ar[dd] & D^b(\Rep (G)) \ar[d] & \Rep (C) \ar[dl] \ar[ddl,bend left] 
    	\\
    	& D^b(\Rep (C)) \ar[d] 
    	\\
    	K^b(\Tilt)/K^b(\Tilt_{neg}) \ar[r,dashed] & \Rep (C)^{st} \ar[rr] & & \operatorname{gr}_{\Zbb}\Ver_p
	\end{tikzcd}
	\]
	The rightmost commuting square is the definition of the OTI functor $\Phi_H$ {(\S\ref{sec: Principal capricorn})}. 
	The commutativity of the middle diamond and triangle were discussed in \S\ref{sec: Principal capricorn}.
	Finally, for the leftmost rectangle first note that any {bounded} complex of negligible tilting modules is perfect when restricted to $C$ by Lemma \ref{lemma negl tilts}, and is hence zero in $\Rep (C)^{st}$. 
	Thus the dashed arrow exists by the universal property of the Verdier quotient.

	With the above diagram in mind, we extend the functor $\Phi_{H}^{st}:\operatorname{Rep}(G)\to \Rep(C)^{st}$ to a functor defined on the bounded homotopy category $\Phi_{H}^{st}:K^b(\operatorname{Tilt})\to\Rep(C)^{st}$.

          \begin{remark} \label{rem:homological} We now explain that
            the restriction of this functor to the principal block is
            homological, in the sense of \cite[Definition 1.1.7]{Neeman}.
            	If we consider the restriction of $\Phi_H^{st}$ to
                $D^b(\operatorname{Rep}^{\ext}_0(G)) \cong
                K^b(\operatorname{Tilt}^{\ext}_0)$ then its image lies
                in
                $\Rep(C)^{st}_p$ by Proposition \ref{prop stable cat im}. By composing
                $\Phi_H^{st}$ with the equivalence to graded super
                vector spaces in Proposition \ref{prop:equiv to svec},
                we may regard $\Phi_H^{st}$ as taking values in
                $\operatorname{gr}_{\Zbb}\operatorname{sVec}$. One may
                then deduce that $\Phi_H^{st}$ applied to any distinguished triangle $X
                \to Y \to Z \to X[1]$, yields a long exact sequence
                \[
\dots \to \Phi_H^{st}(X)_a \to \Phi_H^{st}(Y)_a \to \Phi_H^{st}(Z)_a
\to \Phi_H^{st}(X)_{a+1}\to \dots
                  \]
                where $V_a$ denotes the $a$-th graded component of an
                object $V\in\operatorname{gr}_{\Zbb}\sVec$. Indeed, this
                follows because one may identify the functor
                $\Phi_H^{st}(-)_a$ with $\Hom_{\Rep(C)^{st}_p }(M[-a], \Phi_H^{st}(-))$, where $M =
                M(0,0) \oplus M(2-p,p-2)$.
          \end{remark}
        
\subsubsection{Rouquier complexes}
\label{Sec: Rouquier complexes}

	For any simple reflection $s \in S$, consider the 2-term complexes of functors
	\[
		F_s : \id \to \Theta_s
	\]
	and
	\[
		E_s : \Theta_s \to \id
	\]
	where in both cases the wall-crossing functor $\Theta_s$ lies in cohomological degree zero, and the nonzero differentials arise from (any choices of) adjunctions for the adjoint pairs $(\theta_{\mu_s}^{\lambda}, \theta^{\mu_s}_{\lambda})$ and $(\theta^{\mu_s}_{\lambda}, \theta_{\mu_s}^{\lambda})$. 
	Because both $\id$ and $\Theta_s$ preserve tilting modules, complexes of functors built out of $\id$ and $\Theta_s$ act on $K^b(\Tilt)$ via a double complex construction. 
	In particular, $F_s$ and $E_s$ act on $K^b(\Tilt)$.

	\begin{lemma}
    	For any reduced expression $x = s_1s_2 \dots s_m$ for $x \in {}^fW$ and $p$-regular weight $\lambda_0 \in A_0$ we have isomorphisms
    	\begin{align}
			\nabla_{x \cdot \lambda_0} &\cong F_{s_m} \dots F_{s_2}F_{s_1}(\nabla_{\lambda_0}) \label{eq:res1}\\
			\Delta_{x \cdot \lambda_0} &\cong E_{s_m} \dots E_{s_2}E_{s_1}(\nabla_{\lambda_0}) \label{eq:res2}
    	\end{align}
    	in $D^b(\Rep (G))$.
	\end{lemma}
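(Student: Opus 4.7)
The plan is to proceed by induction on $m = \ell(x)$. The base case $m = 0$ follows because $\lambda_0 \in A_0$ is $p$-regular, so its linkage class is $\{\lambda_0\}$ and hence $\Delta_{\lambda_0} \cong L_{\lambda_0} \cong \nabla_{\lambda_0}$; both right-hand sides (empty products) are $\nabla_{\lambda_0}$, matching the base case.

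For the inductive step, write $x = y s_m$ with $\ell(x) = \ell(y) + 1$. Using the Rouquier-type independence result that $F_{s_m} \cdots F_{s_1}$ (and similarly $E_{s_m} \cdots E_{s_1}$) depends, up to canonical isomorphism in $K^b(\Tilt)$, only on $x$ and not on the reduced expression, I may assume the expression is chosen so that every prefix $y_k = s_1 \cdots s_k$ lies in ${}^f W$; such a reduced expression exists for any $x \in {}^f W$. The inductive hypothesis then gives $F_{s_{m-1}} \cdots F_{s_1}(\nabla_{\lambda_0}) \cong \nabla_{y \cdot \lambda_0}$, and applying $F_{s_m}$ produces the two-term complex
\[
    \bigl[\, \nabla_{y \cdot \lambda_0} \xrightarrow{\eta} \Theta_{s_m}(\nabla_{y \cdot \lambda_0}) \,\bigr]
\]
where $\eta$ is the adjunction unit. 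Comparing with the wall-crossing short exact sequence \eqref{eqn wall crossing SES} for $ys_m > y$,
\[
    0 \to \nabla_{y \cdot \lambda_0} \xrightarrow{\iota} \Theta_{s_m}(\nabla_{y \cdot \lambda_0}) \to \nabla_{x \cdot \lambda_0} \to 0,
\]
it suffices to show $\eta = c \cdot \iota$ for some nonzero scalar $c$, whence the two-term complex is quasi-isomorphic to $\nabla_{x \cdot \lambda_0}$ in $D^b(\Rep(G))$. The $\Delta$ statement proceeds analogously using $E_{s_m}$, the counit of adjunction, and the dual wall-crossing sequence $0 \to \Delta_{x \cdot \lambda_0} \to \Theta_{s_m}(\Delta_{y \cdot \lambda_0}) \to \Delta_{y \cdot \lambda_0} \to 0$.

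The main obstacle is the identification $\eta \propto \iota$, which I would attack by computing $\Hom(\nabla_{y \cdot \lambda_0}, \Theta_{s_m}(\nabla_{y \cdot \lambda_0}))$. By the biadjunction between $\theta_{\lambda_0}^{\mu_{s_m}}$ and $\theta_{\mu_{s_m}}^{\lambda_0}$, this Hom space equals $\End(\theta_{\mu_{s_m}}^{\lambda_0}(\nabla_{y \cdot \lambda_0})) \cong \End(\nabla_{y \cdot \mu_{s_m}}) \cong \Bbbk$, since the endomorphism algebra of a costandard module is one-dimensional. Since both $\eta$ and $\iota$ are nonzero elements of this one-dimensional Hom space, they must coincide up to a nonzero scalar. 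Modulo this Hom computation and the verification that a reduced expression with prefixes in ${}^f W$ exists, everything else is bookkeeping.
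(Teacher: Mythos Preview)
The paper states this lemma without proof, evidently treating it as a standard fact about Rouquier-type complexes and tilting resolutions of (co)standard modules. Your inductive argument is correct and is the standard one: the key computation $\Hom(\nabla_{y\cdot\lambda_0},\Theta_{s_m}(\nabla_{y\cdot\lambda_0}))\cong\End(\theta^{\lambda_0}_{\mu_{s_m}}\nabla_{y\cdot\lambda_0})\cong\End(\nabla_{y\cdot\mu_{s_m}})\cong\Bbbk$ via adjunction is exactly right, and it forces $\eta$ to be a nonzero scalar multiple of $\iota$ (indeed, the map in the wall-crossing sequence \emph{is} the adjunction unit up to scalar by construction). The one nontrivial external input you invoke---that $F_{s_m}\cdots F_{s_1}$ depends up to isomorphism only on $x$ and not on the reduced expression (i.e., the braid relations for these complexes)---is a known result but is not established in the paper, so a reference would be appropriate; the existence of a reduced expression for $x\in{}^fW$ with all prefixes in ${}^fW$ is a standard Coxeter-theoretic fact.
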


	\begin{proof} 
		See \cite[Lemma 2.1]{LibWil} or the proof of \cite[Proposition 2.4]{Gruber}.
	\end{proof}

	\begin{remark}
		Because $\nabla_{\lambda_0} = \Delta_{\lambda_0}$ is tilting (as $\lambda_0$ belongs to the fundamental alcove), the complexes in the lemma provide tilting resolutions of standard and costandard modules. 
		These are typically not minimal complexes.
	\end{remark}

	Note that $\Theta_s=\theta_{\mu_s}^{\lambda}\circ\theta_{\lambda}^{\mu_s}$ maps $\Tilt$ into $\Tilt_{neg}$ since $\theta_{\lambda}^{\mu_s}$ clearly does and $\Tilt_{neg}$ is a tensor ideal. 
	In particular, on the quotient
	\[
		K^b(\Tilt)/K^b(\Tilt_{neg})
	\]
	the complexes $F_s$ (resp. $E_s$) act as $[1]$ (resp. $[-1]$). 
	We deduce isomorphisms (for $x$ as in the lemma):
 	\begin{align}
    	\Phi_H^{st}(\nabla_{x \cdot \lambda_0}) & \cong \Phi_H^{st}(\nabla_{\lambda_0})[\ell(x)] \label{eq:Phi1} 
    	\\
    	\Phi_H^{st}(\Delta_{x \cdot \lambda_0}) & \cong \Phi_H^{st}(\Delta_{\lambda_0})[-\ell(x)] \label{eq:Phi2}
	\end{align}
	where $\Phi_H^{st}$ is defined in (\ref{eqn Phi st}).

	\begin{remark}
    	Equations \eqref{eq:Phi1} and \eqref{eq:Phi2} along with Lemma \ref{lemma shift Pi} immediately imply an alternative proof of Theorem A(1) when $H\cong\alpha_p$. 
	\end{remark}

\subsubsection{Minimal tilting complexes and $\Phi_H$}
\label{sec:minPhi}
	Let $M \in K^b(\Tilt)$ be a minimal complex of tilting modules. 
	For each $i$ we fix decompositions
	\[
		M^i = \bigoplus T_\lambda^{\oplus m_{i,\lambda}}
	\]
	The aim of this section is to prove: 
	\begin{thm}  
	\label{thm:minimal}
		For any minimal complex $M \in K^b(\Tilt)$ as above,
    	\[ 
    		\Phi_H^{st}(M) 
    		\cong 
    		\bigoplus_{\substack{\lambda \in \mathfrak{X}_+ \cap A_0\\ {i \in \Zbb}}} \Phi_H^{st}(T_{\lambda})[-i]^{\bigoplus m_{i,\lambda}}. 
    	\]
	\end{thm}
	In other words, we can compute $\Phi_H^{st}$ (and hence $\Phi_H$) by simply discarding every negligible summand from our complex $M$, evaluating $\Phi_H^{st}$ term by term, and adding up the result.

	\begin{remark}
    	Theorem \ref{thm:minimal} can be used to give an alternative route to our main Theorem B from the introduction. 
    	Indeed, one just needs to know that $\Phi_H$ maps the tilting modules $T_\lambda$ for $\lambda \in \Omega \cdot 0$ corresponding to the highest weights of non-negligible tiltings in the extended principal block to $\sVec$, which follows from Lemma \ref{lemma prep}(3).
	\end{remark}

	It is easy to see that one may prove Theorem \ref{thm:minimal} ``block by block".
	We will prove the result for the principal block, with the other blocks following in a similar way. 
	To this end, let $M$ be a minimal complex of tilting modules in $\operatorname{Tilt}_0$. 
	For each $i$ we fix decompositions
	\[
		M^i = \bigoplus_{x \in {}^fW} T_{x \cdot 0}^{\oplus m_{i,x}}
	\]
	as a direct sum of indecomposable tilting modules. 
	The version of Theorem \ref{thm:minimal} for the principal block is then the following:

	\begin{thm}  
	\label{thm:minimal0}
		For any minimal complex $M\in K^b(\Tilt_0)$ as above
    	\[ 
    		\Phi_H^{st}(M) 
    		\cong 
    		\bigoplus_{{i \in \Zbb}} \Phi_H^{st}(T_0)[-i]^{\bigoplus m_{i,\id}}. 
    	\]
	\end{thm}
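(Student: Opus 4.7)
The plan is to argue by induction on the length of the minimal complex $M^\bullet$, working throughout in the stable module category $\Rep(C)^{st}$ through which $\Phi_H^{st}$ factors. First I would make the following setup: for each $i$, decompose $M^i = Y^i \oplus N^i$, where $Y^i = T_0^{\oplus m_{i,\id}}$ gathers the $T_0$-summands and $N^i = \bigoplus_{x\neq 1} T_{x\cdot 0}^{\oplus m_{i,x}}$ gathers the negligible ones. Two immediate observations drive the argument:
\begin{enumerate}
    \item Since each $N^i$ is negligible, Lemma \ref{lemma negl tilts} gives $\Phi_H^{st}(N^i) = 0$, so $\Phi_H^{st}(M^i) \cong \Phi_H^{st}(T_0)^{\oplus m_{i,\id}}$.
    \item Since $T_0 = \Bbbk$ we have $\End_G(T_0) = \Bbbk$; hence each entry of the $(Y,Y)$-block $d^i_{YY}: Y^i \to Y^{i+1}$ is either $0$ or an isomorphism. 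Minimality forbids the latter, so $d^i_{YY} = 0$.
\end{enumerate}

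The base case is a complex concentrated in one degree, handled by observation (1). For the inductive step, I would use brute truncation to obtain the short exact sequence $0 \to M^b[-b] \to M^\bullet \to \sigma_{<b} M^\bullet \to 0$ and the resulting distinguished triangle in $\Rep(C)^{st}$:
\[
\Phi_H^{st}(M^b)[-b] \to \Phi_H^{st}(M^\bullet) \to \Phi_H^{st}(\sigma_{<b} M^\bullet) \xrightarrow{\delta'} \Phi_H^{st}(M^b)[-b+1].
\]
By induction, the third term is $\bigoplus_{i<b} \Phi_H^{st}(T_0)^{\oplus m_{i,\id}}[-i]$, while the first is $\Phi_H^{st}(T_0)^{\oplus m_{b,\id}}[-b]$. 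To conclude it suffices to show $\delta' = 0$, because then the triangle splits and we obtain the desired isomorphism. The connecting morphism is realized at the level of $K^b(\Tilt_0)$ by the chain map $\sigma_{<b}M^\bullet \to M^b[-b+1]$ given by $d^{b-1}$ in degree $b-1$ and zero elsewhere; after the inductive identification, its only potentially nonzero component is the one into the top summand, and this equals (the image of) $d^{b-1}_{YY} = 0$.

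The main technical obstacle is to justify rigorously that, under the inductive identification of $\Phi_H^{st}(\sigma_{<b} M^\bullet)$ as a direct sum of shifts, the components of $\delta'$ on summands in degrees $i < b-1$ also vanish. Morally this holds because the chain-level representative of $\delta$ is concentrated in a single degree, but one needs to track the sections produced in the recursive splitting. An alternative, and arguably cleaner, packaging of the proof is to work directly in the Verdier quotient $K^b(\Tilt_0)/K^b(\Tilt_{neg,0})$, through which $\Phi_H^{st}$ factors by the discussion in \S\ref{sec:min and Phi}. In that quotient every $T_{x\cdot 0}$ with $x \neq 1$ is a zero object, so all negligible summands of $M^\bullet$ collapse, leaving a complex built from $T_0$'s whose differentials are $d^i_{YY} = 0$; such a complex is automatically the direct sum $\bigoplus T_0^{\oplus m_{i,\id}}[-i]$, and the theorem follows by applying the descended functor to $\Rep(C)^{st}$. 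The block-by-block reduction, combined with the analogous treatment of each block in the extended principal block, then yields the full Theorem~\ref{thm:minimal}.
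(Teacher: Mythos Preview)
Your inductive strategy via the stupid truncation and your key observation (2) that minimality forces $d^i_{YY}=0$ are exactly what the paper does (Lemma~\ref{lem:diff0} and the filtration~\eqref{eq:stupid filt}). You have also correctly located the real difficulty: showing that the components of the connecting map coming from summands in degrees $i<b-1$ vanish. However, you do not close this gap, and neither of your two suggested fixes works as stated.

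The paper resolves the gap not by tracking sections but by invoking Lemma~\ref{lemma homs st cat}: in $\Rep(C)^{st}$ one has $\Hom\bigl(\Phi_H^{st}(T_0)[-i],\,\Phi_H^{st}(T_0)[-j]\bigr)=0$ for $i\neq j$. Concretely, the paper uses the triangle $M^{\ge j+2}\to M^{\ge j+1}\to M^{j+1}$ to produce a long exact sequence of Hom groups; the image of the boundary map in $\Hom(M^j,M^{j+1}[1])$ is the differential (zero by your observation (2)), while the contribution from $M^{\ge j+2}$ vanishes because after applying $\Phi_H^{st}$ and the inductive hypothesis one lands in Hom groups between \emph{different} shifts of $\Phi_H^{st}(T_0)$, which are zero by Lemma~\ref{lemma homs st cat}. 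Your proposal to ``track the sections produced in the recursive splitting'' would require controlling explicit maps in the stable category through an iterated sequence of noncanonical splittings; this is much harder than the one-line Hom computation, and I do not see how to make it go through without eventually appealing to the same vanishing.

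Your alternative via the Verdier quotient $K^b(\Tilt_0)/K^b(\Tilt_{0,neg})$ is also incomplete. It is true that each $N^i$ becomes a zero object there, but one cannot ``collapse'' the negligible summands termwise: the inclusion $Y^\bullet\hookrightarrow M^\bullet$ (with zero differentials on $Y^\bullet$) is not a chain map, because $d^i(Y^i)$ generally has a nonzero component in $N^{i+1}$. So there is no a priori morphism between $\bigoplus_i Y^i[-i]$ and $M^\bullet$ to compare. Producing such an isomorphism in the quotient again requires an inductive splitting argument, which again needs the Hom vanishing between different shifts. In short, Lemma~\ref{lemma homs st cat} is the missing ingredient in both routes you propose.
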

	In other words, $\Phi_H^{st}$ (and hence $\Phi_H$) detects exactly those summands of the minimal complex which are isomorphic to $T_0$.

	Throughout, we will use the interpretation of the OTI functor given in \S\ref{sec:min and Phi}. 
	In other words, $\Phi_H^{st}$ is viewed as a functor
	\[
		\Phi_H^{st}: K^b(\Tilt_0)/K^b(\Tilt_{0,neg}) \to \Rep (C)^{st}
	\]
	where $\Tilt_{0,neg}$ denotes the additive category of negligible tilting modules in the principal block.

	Let $M$ be as in the statement of Theorem \ref{thm:minimal0}. 
	Denote the minimal and maximal degrees of $M$ by $m$ and $n$. 
	Thus $M$ has the form
	\[
		0 \to M^m \to M^{m+1} \to \dots \to M^n \to 0.
	\]
	For every $a$ we can consider the subcomplex $M^{\ge a}$ which is zero in degrees $<a$ and has the terms of $M$ in degrees $\ge a$. 
	We have a diagram of triangles, expressing $M$ as an iterated extension of its terms:
	\begin{equation} 
	\label{eq:stupid filt}
    	\begin{tikzcd}[column sep=0.2cm]
			0 \arrow[rr] & &  M^{\ge n} \arrow[rr] \arrow[dl] & & M^{\ge n-1} \arrow[rr] \arrow[dl] & &  \dots \arrow[rr] & & M^{\ge m} \arrow[dl] 
			\\
			& M^n[-n] \arrow[ul, "{[1]}"', swap] & & M^{n-1}[-(n-1)] \arrow[ul, "{[1]}"', swap] & & \dots  & &M^m[-m] \arrow[ul, "{[1]}"', swap]
		\end{tikzcd}
	\end{equation}
	(This is often referred to as the ``stupid filtration'' of $M$.)

	\begin{lemma} 
	\label{lem:diff0}
    	The image of the differential $d : M^a \to M^{a+1}$ under $\Phi_H^{st}$ is zero.
	\end{lemma}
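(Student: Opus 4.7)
The plan is to reduce the statement to an elementary consequence of minimality, using the classification of non-negligible tilting modules inside the principal block. First I would observe that among the weights $x \cdot 0$ with $x \in {}^f W$ that may index summands of $M^a$ and $M^{a+1}$, only $x = \id$ produces a weight in $A_0$: indeed $\overline{A_0}$ is a fundamental domain for the $p$-dilated dot action of $W$, and $0$ lies in the open alcove $A_0$ by our standing assumption $p \geq h$. Therefore $T_0$ is the unique non-negligible indecomposable tilting module in $\Tilt_0$, and by Lemma \ref{lemma negl tilts} one has $\Phi_H^{st}(T_{x \cdot 0}) = 0$ for every $x \neq \id$.

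Next I would decompose each term as $M^i = T_0^{\oplus m_{i,\id}} \oplus N^i$ with $N^i$ a sum of negligible tiltings. After applying $\Phi_H^{st}$ the negligible summands are killed, so
\[
\Phi_H^{st}(M^i) \cong \Phi_H^{st}(T_0)^{\oplus m_{i,\id}},
\]
and the map $\Phi_H^{st}(d)$ is controlled entirely by the $T_0 \to T_0$ entries of the matrix representing $d$. Since $T_0 \cong \Bbbk$ in the principal block, we have $\End_G(T_0) = \Bbbk$, so each such entry is a scalar.

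Finally I would invoke the defining property of a minimal complex: no matrix entry of its differential is an isomorphism between indecomposable summands. A nonzero scalar map $T_0 \to T_0$ would be an isomorphism, so by minimality every $T_0 \to T_0$ component of $d$ must vanish. Consequently $\Phi_H^{st}(d) = 0$. I do not anticipate a genuine obstacle here; the only point requiring care is the identification of $T_0$ as the unique non-negligible summand type in $\Tilt_0$, which is handled by the fundamental-domain statement together with Lemma \ref{lemma negl tilts}.
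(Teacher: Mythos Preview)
Your proposal is correct and follows essentially the same line as the paper's proof: both identify $T_0$ as the unique non-negligible indecomposable tilting module in $\Tilt_0$, reduce $\Phi_H^{st}(d)$ to its $T_0 \to T_0$ matrix entries, and use minimality together with $\End_G(T_0)=\Bbbk$ to conclude these entries vanish. The only cosmetic difference is that you spell out the fundamental-domain reason why $x\cdot 0 \in A_0$ forces $x=\id$, which the paper leaves implicit.
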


	\begin{proof} 
		Recall our decompositions 
		\[
			M^a = \bigoplus\limits_{x \in {}^fW} T_{x \cdot0}^{\oplus m_{a,x}}
			\quad \text{and} \quad
			M^{a+1} = \bigoplus\limits_{x \in {}^fW} T_{x \cdot0}^{\oplus m_{a+1,x}}.
		\]
		Let us regard our differential $d$ as a matrix of morphisms between these indecomposable tilting modules. 
		Because $\Phi_H^{st}$ kills any negligible tilting module we only need to check entries of our matrix whose source and target are $T_0$. 
		However, $T_0$ is simple, so we only need to check that no entries of our matrix consist of scalar multiples of the identity map from $T_0$ to $T_0$. 
		This is implied by our assumption that our complex is minimal.
	\end{proof}

	We are now ready to prove Theorem \ref{thm:minimal}:

	\begin{proof}
    	We prove
    	\begin{equation} 
    	\label{eq:ind}
    		\Phi_H^{st}(M^{\ge j}) 
    		= 
    		\bigoplus_{i \ge j} \Phi_H^{st}(T_0)[-i]^{\bigoplus m_{i,\id}}
   			= 
   			\bigoplus_{i \ge j} M(0,0)[-i]^{\bigoplus m_{i,\id}} 
    	\end{equation}
    	by descending induction on $j$, with the base case $j = n+1$ being trivial and the final case $j = m$ being the statement of Theorem \ref{thm:minimal}. 
    	(Recall that $M(0,0)$ denotes the image of the trivial module in the stable category of $C$-modules, as in \S\ref{sec: Capricorn}.)

		Consider the distinguished triangle
		\[
			M^{\ge j+1} \to M^{\ge j} \to M^j[-j] \stackrel{[1]}{\to} .
		\]
		We are done if we can show that $\Phi_H^{st}$ sends the boundary map in this triangle
		\begin{equation} 
		\label{eq:boundary}
			M^j \to M^{\ge j+1}[1]
		\end{equation}
		to zero. 
		To this end, note that the distinguished triangle 
		\[
			M^{\ge j+2} \to M^{\ge j+1} \to M^{j+1}[-j-1] \stackrel{[1]}{\to} 
		\]
		yields (after applying $[1]$ and taking hom from $M^j[-j]$) a long exact sequence
		\[
			\dots 
			\to \Hom(M^j[-j], M^{\ge{j+2}}[1])
			\to \Hom(M^j[-j], M^{\ge{j+1}}[1])
			\to \Hom(M^j[-j], M^{j+1}[-j])
			\to \dots
		\]
		Applying $\Phi_H^{st}$ yields a commutative diagram of long exact sequences:
		\begin{equation} 
		\label{eq:big commute}
    		\begin{tikzcd}[cramped, row sep=0.8em, column sep=0.6em]
				\dots \arrow[r] &  \Hom(M^j[-j], M^{\ge j+2}[1]) \arrow[r] \arrow[d] & \Hom(M^j[-j], M^{\ge j+1}[1]) \arrow[r] \arrow[d] & \Hom(M^j[-j], M^{j+1}[-j]) \arrow[r] \arrow[d] & \dots \\
				\dots \arrow[r] &  \Hom(\overline{M^j[-j]}, \overline{M^{\ge j+2}}[1]) \arrow[r]  & \Hom(\overline{M^j[-j]}, \overline{M^{\ge j+1 }}[1]) \ar[r] &  \Hom(\overline{M^j[-j]}, \overline{M^{j+1}}[-j]) \arrow[r] & \dots 
			\end{tikzcd}
		\end{equation}
		(For display purposes only we use an overline to indicate the application of the functor $\Phi_H^{st}$ -- so for example, $\overline{M^j} = \Phi_H^{st}(M^j)$ etc.).

		Our boundary map in \eqref{eq:boundary} gives rise to an element $b \in \Hom(M^j[-j], M^{\ge{j+1}}[1])$ which we claim goes to zero under $\Phi_H^{st}$; {i.e., the middle vertical arrow in \eqref{eq:big commute}}.

		We first claim that 
		\[
			\Hom(\Phi_H^{st}({M^j[-j]}), \Phi_H^{st}(M^{\ge{j+2}}[1])) 
			= 
			0.
		\]
		Indeed, the left hand side is isomorphic to a direct sum of copies of $M(0,0)[-j]$, whilst the right hand side is isomorphic to a direct sum of copies of $M(0,0)[-k]$ with $k > j$ by induction. 
		The claimed vanishing follows from Lemma \ref{lemma homs st cat}.

		Thus the lower left group in \eqref{eq:big commute} is zero, and in order to check that our boundary map goes to zero it is enough to show that its image in the lower right group is zero. 
		By the commutativity of the right-hand square of \eqref{eq:big commute}, its image there agrees with the image of the differential in $\Hom(M^j[-j], M^{j+1}[-j])$. 
		However, this differential goes to zero under $\Phi_H^{st}$ by Lemma \ref{lem:diff0}. 
		We conclude that $b$ indeed goes to zero under $\Phi_H^{st}$ which concludes the proof. 
	\end{proof}

\subsection{Singular modules and $\Phi_H$} 
\label{sec: Singular modules}

	Recall from the introduction that a $G$-module is called singular if its minimal tilting resolution involves only negligible tilting modules. 
	The goal of this section is to prove:

	\begin{prop} 
	\label{prop:detectsGruber}
		A $G$-module $V$ is singular if and only if $\Phi_H(V) = 0$.
	\end{prop}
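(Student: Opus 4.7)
The plan is to deduce Proposition \ref{prop:detectsGruber} essentially as a direct corollary of Theorem \ref{thm:minimal}, which already computes $\Phi_H^{st}$ on a minimal tilting complex in terms of its non-negligible summands only. The main preliminary observation is that the condition $\Phi_H(V)=0$ is equivalent to $\Phi_H^{st}(V)=0$: since $V|_{C_K}$ is $C_K$-projective if and only if $V|_H$ is $\alpha_p$-projective (every indecomposable $C_K$-module becomes indecomposable on restriction to $\alpha_p$, and projectivity depends only on whether the Jordan block has length $p$), vanishing in $\operatorname{Rep}(C)^{st}$ matches exactly Lemma \ref{OTI 0 iff proj and SES}.

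Next, I would fix a minimal complex $M = C_V^\bullet \in K^b(\operatorname{Tilt})$ representing $V$ under the equivalence $K^b(\operatorname{Tilt})\simeq D^b(\operatorname{Rep}(G))$, and decompose each term as $M^i = \bigoplus_\lambda T_\lambda^{\oplus m_{i,\lambda}}$. Theorem \ref{thm:minimal} then gives
\[
\Phi_H^{st}(V) \;\cong\; \bigoplus_{\lambda \in \mathfrak{X}_+\cap A_0,\; i} \Phi_H^{st}(T_\lambda)[-i]^{\oplus m_{i,\lambda}},
\]
where the sum runs only over highest weights of non-negligible tilting modules, by Lemma \ref{lemma negl tilts}.

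The forward direction is then immediate: if $V$ is singular, then by Gruber's definition every summand of $M$ has dimension divisible by $p$, equivalently every summand is negligible, so $m_{i,\lambda}=0$ for every $\lambda \in \mathfrak{X}_+ \cap A_0$, and the displayed formula gives $\Phi_H^{st}(V)=0$. Conversely, if $V$ is not singular then some $m_{i,\lambda}$ with $\lambda \in \mathfrak{X}_+\cap A_0$ is positive; for such $\lambda$ we have $T_\lambda = \nabla_\lambda$ and $\Phi_H^{st}(T_\lambda)$ is a one-dimensional object of the form $M(\mu,0)$ concentrated in a specific degree, hence nonzero in $\operatorname{Rep}(C)^{st}$. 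The contribution $\Phi_H^{st}(T_\lambda)[-i]^{\oplus m_{i,\lambda}}$ is therefore a nonzero summand of $\Phi_H^{st}(V)$, giving $\Phi_H^{st}(V)\neq 0$.

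The only subtlety—and the reason there is no obstacle once Theorem \ref{thm:minimal} is in hand—is ensuring that different non-negligible contributions cannot cancel. In the direct-sum decomposition coming from Theorem \ref{thm:minimal} this is automatic, since each summand is literally present, not merely the value of a spectral sequence or a derived functor. Thus the main content has already been absorbed into Theorem \ref{thm:minimal}; the proposition itself is essentially a bookkeeping statement translating Gruber's definition of ``singular'' (no non-negligible summands in the minimal complex) into the functorial vanishing condition $\Phi_H(V)=0$.
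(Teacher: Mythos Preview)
Your proposal is correct and follows essentially the same route as the paper: both reduce the proposition to Theorem~\ref{thm:minimal} (the paper reduces to the principal-block version, Theorem~\ref{thm:minimal0}, after first projecting to a block), read off that $\Phi_H^{st}(V)$ is a direct sum indexed by the non-negligible tilting summands of the minimal complex, and conclude. One small overstatement: for general $\lambda \in \mathfrak{X}_+ \cap A_0$ the paper does not establish that $\Phi_H^{st}(T_\lambda)$ is one-dimensional of the form $M(\mu,0)$ (this is only proved for $\lambda$ in the extended principal block); what you actually need, and what suffices, is that $\Phi_H^{st}(T_\lambda) \neq 0$, which is exactly Corollary~\ref{cor oti tilts} combined with your observation that $\Phi_H$ and $\Phi_H^{st}$ vanish simultaneously.
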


	\begin{remark}
    	Recall Gruber's quotient category
    	\[
    		\underline{\Rep}(G) = \Rep (G)/\Rep (G)_{sing}
    	\]
    	from the introduction. 
    	The proposition shows that the OTI functor faithfully detects objects in $\underline{\Rep}(G)$
	\end{remark}

	\begin{proof} 
		Let us choose a minimal tilting complex $M \in K^b(\Tilt)$ which is quasi-isomorphic to $V$. 
		By definition, $V$ is singular if $M$ consists entirely of negligible tilting modules; in other words, if no summand of any term of $M^i$ is isomorphic to  $T_{\lambda_0}$, where $\lambda_0 \in A_0$. 
		By Theorem \ref{thm:minimal} this is the case if and only if $\Phi_H(V) = 0$.
	\end{proof}

	\begin{remark}
    	The above proof benefited from discussions with Jonathan Gruber.
	\end{remark}

\subsection{The Finkelberg-Mirkovi\'c conjecture and $\Phi_H$}
\label{sec: FM Conj and Phi}

	In this section we discuss our functor in relation to the Finkelberg-Mirkovi\'c conjecture. 
	As discussed in the introduction, the Finkelberg-Mirkovi\'c conjecture is now a theorem \cite{BRR, BR1, BR2}. 
	Here we discuss the conjecture (stated as Conjecture \ref{conj:FM} in the introduction) that our functor is isomorphic to hypercohomology under this equivalence.

\subsubsection{The geometric Satake equivalence and hypercohomology}
\label{sec: Geometric Satake}

	Let $G$ be as above and let ${}^LG$ be the complex group which is dual in the sense of Langlands to $G$.  
	Let $\Gr= {}^LG((t))/{}^LG[[t]]$ denote the affine Grassmannian for ${}^LG$. 
	The geometric Satake equivalence \cite{MV} provides an equivalence of (symmetric) tensor categories
	\[
		(\Rep (G), \otimes) \stackrel{\sim}{\to} ( P_{{}^LG[[t]]}(\Gr,\Bbbk), \star),
	\]
	where $P_{{}^LG[[t]]}(\Gr,\Bbbk)$ denotes the category of perverse sheaves with $\Bbbk$ coefficients, which are equivariant with respect to the left action of ${}^LG[[t]]$, and $\star$ is the convolution product. 
	This provides a geometric realization of $\Rep (G)$, but the block decomposition is opaque (see, however, \cite{RWlinkage}).

	On $\Rep (G)$ one has the forgetful functor to vector spaces, which is a tensor functor, and from which one can recover $G$ via Tannakian formalism. 
	A key step in the proof of geometric Satake is to prove that hypercohomology on $P_{{}^LG[[t]]}(\Gr)$ is a faithful tensor functor, and thus provides a fibre functor.

	It was noticed by Ginzburg \cite{GinzburgLoop} that the hypercohomology functor in geometric Satake has more structure. 
	(Ginzburg assumes that $\Bbbk$ is of characteristic $0$, and we will do the same for this paragraph.) 
	Namely, if one thinks of hypercohomology as homomorphisms from the constant sheaf $\Bbbk_{\Gr}$ then it is clear that it results in graded modules over the derived endomorphisms of $\Bbbk_{\Gr}$, which is the cohomology ring of $\Gr$:
	\[
		H^* 
		= 
		\Hom^{*}(\Bbbk_{\Gr},-): 
		P_{{}^LG[[t]]}(\Gr,\Bbbk) \to H^*(\Gr,\Bbbk)\text{-}\operatorname{grMod}.
	\]
	Ginzburg goes on to prove that $H^*(\Gr,\Bbbk)$ is naturally a Hopf algebra, isomorphic to the enveloping algebra of the centraliser $\mathfrak{g}^e$ of a regular nilpotent element $e \in \mathfrak{g}$, where $\mathfrak{g}$ is the Lie algebra of $G$:
	\[
		H^*(\Gr,\Bbbk) 
		= 
		U(\mathfrak{g}^e).
	\]
	Moreover, under the geometric Satake equivalence and this
        isomorphism, restriction to $\mathfrak{g}^e$ is isomorphic to
        the hypercohomology functor:
	\begin{equation} 
	\label{eq:ginzburg diagram}
		\begin{tikzcd}
    		\mathrm{Rep}(G) \arrow[r, "\sim"] \arrow[rd, swap, "res"] & P_{{}^LG[[t]]}(\Gr, \Bbbk) \arrow[d, "H^*"] \\
    		& H^*(\mathrm{Gr},\Bbbk) \text{-}\operatorname{grMod}
		\end{tikzcd}
	\end{equation}

	The setting when $\Bbbk$ is of positive characteristic is more
        subtle, and has been worked out by Yun and Zhu \cite{YunZhu}. 
	In this case, one has an isomorphism \cite[Corollary 6.4]{YunZhu}
	\begin{equation} 
	\label{eq:YZ}
		H^*(\Gr,\Bbbk) 
		= 
		\Dist C_G(e),
	\end{equation}
	where $e$ is a regular unipotent element, $C_G(e)$ its centraliser group scheme in $G$, and $\Dist$ denotes its distribution algebra.\footnote{\cite{YunZhu} prove such an isomorphism under certain restrictions on $p$, however $p \ge h$ is always enough. We are also brushing some connectedness issues under the rug to simplify notation.} 
	Moreover, under this identification the obvious analogue of the diagram \eqref{eq:ginzburg diagram} commutes.

\subsubsection{The Finkelberg-Mirkovi\'c conjecture}
\label{sec: FM conj}

	As we discussed above, one drawback of geometric Satake for attacking questions in the representation theory of $G$ geometrically is that it does not see the block decomposition. 
	The Finkelberg-Mirkovi\'c conjecture addresses this defect. 
	As in the introduction, let $\mathrm{Iw}$ denote the Iwahori subgroup of ${}^LG((t))$ corresponding to our choice of Borel ${}^LB \subset {}^LG$.
	Finkelberg and Mirkovi\'c conjectured an equivalence
	\begin{equation} 
	\label{eq:FM}
		\Rep_0^{\ext}(G) \stackrel{\sim}{\to} P_{(\mathrm{Iw})}(\Gr,\Bbbk),
	\end{equation}
	where $P_{(\mathrm{Iw})}(\Gr,\Bbbk)$ denotes the category of perverse sheaves which are constructible with respect to the stratification by Iwahori orbits.

	We now discuss Conjecture \ref{conj:FM}.  
	Recall from Proposition \ref{prop stable cat im} that $\Phi_H^{st}$ takes values in $\Rep(C)^{st}_p$ which, by Proposition \ref{prop:equiv to svec}, is equivalent to graded super vector spaces. 
    Thus $\Phi_H$ can be seen as taking values in super vector spaces. We have also seen that $\Dist C_G(e)^{(1)}$ acts naturally on $\Phi^{st}_H$. 
	Our first goal is to prove that these two actions are compatible. 
	More precisely, that $\Phi_H$ may be regarded as taking values in $\Dist C_G(e) \, \text{-}\operatorname{grMod}$, the category of graded $\Dist C_G(e)$-modules.

	First note that we have a canonical isomorphism\footnote{As
          pointed out by the referee, one can get rid of the need for
          this identification if, in the Finkelberg-Mirkovi\'c conjecture, one considers the affine Grassmannian of a group whose Langlands dual is $G^{(1)}$. (This is the formulation that is actually proved in \cite{BR2}.)}
	\[
		\Dist C_G(e)^{(1)} \cong \Dist C_G(e)
	\]
	that divides degrees by $p$. 
	One way to get this isomorphism is to note that \eqref{eq:YZ} holds over any field, in particular over $\mathbb{F}_p$, which yields an $\mathbb{F}_p$-rational structure on $C_G(e)$.

	\begin{lemma}
  		For any $V \in \Rep_0^{\ext}(G)$, $\Phi_H(V)$ is naturally a graded $\Dist C_G(e)$-module.
	\end{lemma}

	\begin{proof} 
  		Fix $\gamma \in \Dist C_G(e)$ of degree $m$, and let $\gamma' \in \Dist C_G(e)^{(1)}$ denote the corresponding element of $\Dist C_G(e)^{(1)}$ of degree $pm$.  
  		This element acts naturally on $V$, and hence on $\Phi_H^{st}(V)$, as an endomorphism of degree $pm$:
  		\begin{equation} 
  		\label{eq:gammas home}
			\gamma' \in \Hom(\Phi^{st}_H(V), \Phi^{st}_H(V) \otimes M(pm, 0)).
		\end{equation}
  		We may assume that $V$ is indecomposable, and
                therefore without loss of generality, that
                $\Phi^{st}_H (V) \in \langle
                M(a,0) \rangle \subset \Rep
                (C)^{st}$, where $a \in \{ 0,-p\}$.
  		Now \eqref{eq:gammas home} is zero unless $m$ is even, because $p$ is odd.\footnote{Alternatively, we could have used that the grading on $\Dist C_G(e)$ vanishes in odd degree, which can be deduced from \eqref{eq:YZ}.}
  		Hence, $\gamma'$ lives in $\Hom(\Phi^{st}_H(V), \Phi^{st}_H(V)[m])$ by Lemma \ref{lemma shifts of O}. 
  		In particular, after applying our equivalence $\mathcal{E}$, we deduce from Proposition \ref{prop:equiv to svec} that $\gamma'$ induces an operator of degree $m$. 
  		This is our natural action of $\gamma$.
	\end{proof}

    It follows that we may regard $\Phi_H$ as taking values in $\Dist C_G(e)$-$\operatorname{grMod}$. 
    The following connects this action to the Finkelberg-Mirkovi\'c conjecture:

	\begin{conj} 
	\label{conj:FM2}
		We have a commuting diagram:
		\[
		\begin{tikzcd}
    		\mathrm{Rep}_0^{\mathrm{ext}}(G) \arrow[r, "\sim"] \arrow[d, swap, "\Phi_H"] & P_{(\mathrm{Iw})}(\mathrm{Gr}, \Bbbk) \arrow[d, "H^*"] \\
  			\Dist C_G(e)\text{-}\operatorname{grMod} \arrow[r, "\sim"]  & H^*(\mathrm{Gr})\text{-}\operatorname{grMod}
		\end{tikzcd}
              \]
              (Recall that $A\text{-}\operatorname{grMod}$ denotes graded
                modules over the graded ring $A$.)
	\end{conj}

	In order to check that our conjecture is plausible, we check that $\Phi_H$ and $H^*$ take the same values on standard, costandard and tilting objects in the principal block. 
	For any $x \in {}^fW$, we denote by
	\[
	j_x : X_x = \mathrm{Iw} \cdot x^{-1} \; {}^LG[[t]]/{}^LG[[t]] \hookrightarrow \Gr
	\]
	the inclusion of the Schubert cell indexed by $x^{-1}$ into the affine Grassmannian. 
	Under the Finkelberg-Mirkovi\'c conjecture one has: 
	\begin{align*}
		\nabla_{x \cdot 0} & \mapsto j_{x*}\Bbbk_{X_x}[\ell(x)], \\
		\Delta_{x \cdot 0} & \mapsto j_{x!}\Bbbk_{X_x}[\ell(x)], \\
		T_{x \cdot 0} & \mapsto \mathcal{T}_x
	\end{align*}
	where $\mathcal{T}_x$ denotes the indecomposable tilting perverse sheaf with support $\overline{X_x}$.

	Because $x \in W$, we have seen in \eqref{eq:Phi1} and \eqref{eq:Phi2} that 
	\[
		\Phi_H^{st}(\nabla_{x \cdot 0}) 
		= 
		\Phi_H^{st}(\Bbbk)[\ell(x)] 
		\quad \text{and} \quad 
		\Phi_H^{st}(\Delta_{x \cdot 0}) 
		= 
		\Phi_H^{st}(\Bbbk)[-\ell(x)].
	\]
	On the geometric side one calculates easily (using that cells are contractible):
	\[
		H^*(j_{x*} \Bbbk_{X_x}[\ell(x)]) 
		= 
		\Bbbk[\ell(x)] 
		\quad \text{and} \quad 
		H^*(j_{x!} \Bbbk_{X_x}[\ell(x)]) 
		= 
		\Bbbk[-\ell(x)].
	\]
	For tilting modules, we have (see Lemma \ref{lemma tiltings sl2})
	\[
		\Phi_H(T_{x \cdot 0}) 
		= 
		\begin{cases} 
			\Phi_H(T_0) & \text{if $x = \id$,} 
			\\ 
			0 & \text{otherwise}. 
		\end{cases}
	\]
	For tilting sheaves, one has (see \cite{tiltingexercises})
	\[
		H^*(\mathcal{T}_x) 
		= 
		\begin{cases} 
			\Bbbk & \text{if $x = \id$,} 
			\\ 
			0 & \text{otherwise}. 
		\end{cases}
	\]

	\begin{remark} 
		We finish with several remarks concerning our conjecture:
	\begin{enumerate}
    	\item The appearance of super vector spaces in the image of
          $\Phi_H$ appears natural from the algebraic side, but is
          still somewhat mysterious on the geometric side. For
          example, consider the principal block of $SL_2$ and let $0$
          and $\lambda_0$ denote the two weights in the extended
          principal block in the dominant alcove. On the algebraic
          side, $\Phi_H$ maps $L_0$ and $L_{\lambda_0}$ to
          one-dimensional even and odd vector spaces
          respectively. Geometrically, $L_0$ and $L_{\lambda_0}$ are
          realised as skyscraper sheaves on the two components of
          $\Gr$.  In a different vein, for any $G$ one may use
          Remark \ref{rem:homological} to show that $\Phi_H(L_{s\cdot 0})\cong\Pi\Bbbk^2$ for $s$ the simple affine reflection, and we note this is purely odd. We expect parity vanishing properties of $\Phi_H(L_{x\cdot 0})$ to be connected to deep questions in representation theory.  
    	\item If one interprets the functors $\Phi_H$ and $H^*$ in terms of minimal complexes of tilting modules (see \S \ref{sec:minPhi}), they have almost identical descriptions. It is likely that this should allow one to establish our conjecture if one regards both functors as landing in the derived category of vector spaces (i.e. one ignores the action of $C_G(e)^{(1)}$). In particular, one can use this observation to prove that $H^*$ and $\Phi_H$ produce isomorphic graded vector spaces when applied to simple modules.

\end{enumerate}
\end{remark}

\bibliographystyle{alpha}

\newcommand{\etalchar}[1]{$^{#1}$}

\end{document}